\theoremstyle{definition}
\newtheorem{lemma}{Lemma}[section]
\newtheorem{theorem}[lemma]{Theorem}
\newtheorem{claim}[lemma]{Claim}
\newtheorem{remark}[lemma]{Remark}
\newtheorem{definition}[lemma]{Definition}
\newtheorem{proposition}[lemma]{Proposition}
\renewenvironment{proof}{\textbf{Proof.}}{\qed}
\begin{document}

\title{Simultaneous Universal Pad\'{e} Approximation} 
\author{K. Makridis and V. Nestoridis \thanks{Dedicated to Professor C. Gryllakis for his $60^{th}$ birthday.}}
\maketitle

\begin{abstract}

We prove simultaneous universal Pad\'{e} approximation for several universal Pad\'{e} approximants of several types. Our results are generic in the space of holomorphic functions, in the space of formal power series as well as in a subspace of $A^{\infty}$. These results are valid for one center of expansion or for several centers as well. 

\end{abstract}

AMS clasification numbers: 30K05.

\medskip

Keywords and phrases: Power series, holomorphic function, Pad\'{e} approximant, Baire's theorem, chordal distance, algebraic genericity.

\section{Introduction} 

Let $\Omega \subseteq \mathbb{C}$ be a simply connected domain, $\zeta \in \Omega$ and $\mu$ be an infinite subset of $\mathbb{N}$. A holomorphic function $f \in H(\Omega)$ is called universal Taylor series if for every compact set $K \subseteq \mathbb{C} \setminus \Omega$ with connected complement and for every funcion $h \in A(K)$, there exists a sequence $(\lambda_n)_{n \in \mathbb{N}} \subseteq \mu$ satisfying the following:

\begin{itemize}

\item[(i)]
$\sup_{z \in K} |S_{\lambda_n}(f, \zeta)(z) - h(z)| \to 0$ as $n \to + \infty$.

\item[(ii)]
$\sup_{z \in J} |S_{\lambda_n}(f, \zeta)(z) - f(z)| \to 0$ as $n \to + \infty$ for every compact set $J \subseteq \Omega$.

\end{itemize}

Here $S_N(f, \zeta)(z) = \sum_{j = 0}^N \frac{f^{(j)}(\zeta)}{j!}(z - \zeta)^{j}$ is the Taylor expansion of the function $f$ centered at $\zeta \in \Omega$. This is a generic property of holomorphic functions (\cite{NESTORIDIS4}, \cite{NESTORIDIS2}, \cite{MELAS.NESTORIDIS}). 

Recently, the partial sums have been replaced by some rational functions, the Pad\'{e} approximants of $f$.

Let $f(z) = \sum_{n = 0}^{+ \infty} a_n (z - \zeta)^{n}$ be a formal power series with center $\zeta \in \Omega$ and $p, q \in \mathbb{N}$. Then the $(p, q)$ - Pad\'{e} approximant of $f$ with center $\zeta \in \Omega$ is a rational function $[f; p / q]_{\zeta}(z) = \frac{A(z)}{B(z)}$, where the polynomials $A$ and $B$ satisfy $degA \leq p$, $degB \leq q$, $B(\zeta) = 1$ and the Taylor expansion of the function $\frac{A(z)}{B(z)} = \sum_{n = 0}^{+ \infty} b_n (z - \zeta)^{n}$ satisfies $a_n = b_n$ for every $n \leq p + q$. Such a rational function may exists or not, but if it exists it is unique. Furhtermore, when the function $[f; p / q]_{\zeta}$ exists and the polynomials $A$ and $B$ are unique, we write $f \in D_{p, q}(\zeta)$. There are two types of universal Pad\'{e} approximants.

\bigskip

\textbf{Type I} (\cite{DARAS.NESTORIDIS}, \cite{DARAS.FOURNODAULOS.NESTORIDIS}) Let $(p_n)_{n \in \mathbb{N}}, (q_n)_{n \in \mathbb{N}} \in \mathbb{N}$ with $p_n \to + \infty$, $\Omega \subseteq \mathbb{C}$ be a simply connected domain and $\zeta \in \Omega$ be a fixed point. A holomorphic function $f \in H(\Omega)$ with Taylor expansion $f(z) = \sum_{n = 0}^{+ \infty} \frac{f^{(n)}(\zeta)}{n!} (z - \zeta)^{n}$ has universal Pad\'{e} approximants of Type I if for every compact set $K \subseteq \mathbb{C} \setminus \Omega$ with connected complement and for every function $h \in A(K)$, there exists a subsequence $(p_{k_n})_{n \in \mathbb{N}}$ of the sequence $(p_n)_{n \in \mathbb{N}}$ satisfying the following:

\begin{itemize}

\item[(i)]
$f \in D_{p_{k_n}, q_{k_n}}(\zeta)$ for every $n \in \mathbb{N}$.

\item[(ii)]
$\sup_{z \in K} |[f; p_{k_n} / q_{k_n}]_{\zeta}(z) - h(z)| \to 0$ as $n \to + \infty$.

\item[(iii)]
$\sup_{z \in J} |[f; p_{k_n} / q_{k_n}]_{\zeta}(z) - f(z)| \to 0$ as $n \to + \infty$, for every compact set $J \subseteq \Omega$.

\end{itemize}

Here the uniform convergences are meant with respect to the usual Euclidean distance in $\mathbb{C}$. The set of universal Pad\'{e} approximants of Type I is a dense and $G_{\delta}$ subset of $H(\Omega)$, where the space $H(\Omega)$ is endowed with the topology of uniform convergence on compacta. If $q_n = 0$, then this class of functions coincides with the class of universal Taylor series.

\bigskip

\textbf{Type II} (\cite{NESTORIDIS3}) Let $(p_n)_{n \in \mathbb{N}}, (q_n)_{n \in \mathbb{N}} \subseteq \mathbb{N}$ with $p_n, q_n \to + \infty$, $\Omega \subseteq \mathbb{C}$ be a domain and $\zeta \in \Omega$ be a fixed point. A holomorphic function $f \in H(\Omega)$ with Taylor expansion $f(z) = \sum_{n = 0}^{+ \infty} \frac{f^{(n)}(\zeta)}{n!} (z - \zeta)^{n}$ has universal Pad\'{e} approximants of Type II if for every compact set $K \subseteq \mathbb{C} \setminus \Omega$ with connected complement and for every rational function $h$, there exist two subsequence $(p_{k_n})_{n \in \mathbb{N}}$ and $(q_{k_n})_{n \in \mathbb{N}}$ of the sequences $(p_n)_{n \in \mathbb{N}}$ and $(q_n)_{n \in \mathbb{N}}$ respectively satisfying the following:

\begin{itemize}

\item[(i)]
$f \in D_{p_{k_n}, q_{k_n}}(\zeta)$ for every $n \in \mathbb{N}$.

\item[(ii)]
$\sup_{z \in K} \chi([f; p_{k_n} / q_{k_n}]_{\zeta}(z), h(z)) \to 0$ as $n \to + \infty$. The metric $\chi$ is the well - known distance defined on $\mathbb{C} \cup \{ \infty \}$.

\item[(iii)]
$\sup_{z \in J} |[f; p_{k_n} / q_{k_n}]_{\zeta}(z) - f(z)| \to 0$ as $n \to + \infty$ for every compact set $J \subseteq \Omega$.

\end{itemize}

The set of such functions is dense and $G_{\delta}$ subset of $H(\Omega)$, where the space $H(\Omega)$ is endowed with the topology of uniform convergence on compacta.

The universal Pad\'{e} approximants of Type I may be combined with universal Taylor series to yield simultaneous universal Pad\'{e} - Taylor approximation with the same indexes $p_{k_n}$ (\cite{MAKRIDIS1}, \cite{MAKRIDIS2}). Generally speaking, for any finite set of universal Pad\'{e} approximants of Type I with the same sequence $(p_n)_{n \in \mathbb{N}}$ we can prove simultaneous universal Pad\'{e} approximations with the same subsequence $(p_{k_n})_{n \in \mathbb{N}}$. In fact, we prove the following stronger theorem.

\begin{theorem} \label{theorem 1.1}

Let $(p_n)_{n \in \mathbb{N}} \subseteq \mathbb{N}$ with $p_n \to + \infty$. For every $n \in \mathbb{N}$, let $q^{(n)}_1, \cdots, q^{(n)}_{N(n)} \in \mathbb{N}$, where $N(n) \in \mathbb{N}$. Let $\Omega \subseteq \mathbb{C}$ be a simply connected domain and $\zeta \in \Omega$ be a fixed point. Then there exists a holomorphic function $f \in H(\Omega)$ with Taylor expansion $f(z) = \sum_{n = 0}^{+ \infty} \frac{f^{(n)}(\zeta)}{n!} (z - \zeta)^{n}$ such that for every compact set $K \subseteq \mathbb{C} \setminus \Omega$ with connected complement and for every function $h \in A(K)$, there exists a subsequence $(p_{k_n})_{n \in \mathbb{N}}$ of the sequence $(p_n)_{n \in \mathbb{N}}$ satisfying the following:

\begin{itemize}

\item[(i)]
$f \in D_{p_{k_n}, q^{(k_n)}_j}(\zeta)$ for every $j \in \{ 1, \cdots, N(k_n) \}$ and for every $n \in \mathbb{N}$.

\item[(ii)]
$\sup_{z \in K} |[f; p_{k_n} / q^{(k_n)}_{\sigma(k_n)}]_{\zeta}(z) - h(z)| \to 0$ as $n \to + \infty$ for every selection $\sigma: \mathbb{N} \to \mathbb{N}$ satisfying $\sigma(k_n) \in \{ 1, \cdots, N(k_n) \}$.

\item[(iii)]
$\sup_{z \in J} |[f; p_{k_n} / q^{(k_n)}_{\sigma(k_n)}]_{\zeta}(z) - f(z)| \to 0$ as $n \to + \infty$ for every compact set $J \subseteq \Omega$ and for every selection $\sigma: \mathbb{N} \to \mathbb{N}$ satisfying $\sigma(k_n) \in \{ 1, \cdots, N(k_n) \}$.

\end{itemize}

Moreover, the set of all such functions is dense and $G_{\delta}$ in $H(\Omega)$, where the space $H(\Omega)$ is endowed with the topology of uniform convergence on compacta.

\end{theorem}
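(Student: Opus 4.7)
The plan is a Baire category argument in $H(\Omega)$ with the topology of uniform convergence on compacta. First I would introduce the usual countable data: an exhaustion $(J_t)_{t\in\mathbb{N}}$ of $\Omega$ by compacta with connected complement (possible since $\Omega$ is simply connected); a sequence $(K_m)_{m\in\mathbb{N}}$ of compact subsets of $\mathbb{C}\setminus\Omega$ with connected complement such that every such $K$ is contained in some $K_m$; and a countable family $(P_\ell)_{\ell\in\mathbb{N}}$ of polynomials with coefficients in $\mathbb{Q}+i\mathbb{Q}$, which by Mergelyan is dense in $A(K)$ for every compact $K$ with connected complement. For $m,\ell,t,s\in\mathbb{N}$ and $j\in\{1,\ldots,N(s)\}$ define
\[ E(m,\ell,t,s,j) = \bigl\{ f\in H(\Omega): f\in D_{p_s,q^{(s)}_j}(\zeta),\ \sup_{K_m}|[f;p_s/q^{(s)}_j]_\zeta - P_\ell|<1/t,\ \sup_{J_t}|[f;p_s/q^{(s)}_j]_\zeta - f|<1/t \bigr\}. \]
Each such set is open, because $D_{p,q}(\zeta)$ is characterized by the non-vanishing of a Hankel determinant in the Taylor coefficients at $\zeta$ (an open condition), the Pad\'e map is continuous on this open locus, and the uniform norms over fixed compacta are continuous. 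Set $U=\bigcap_{m,\ell,t}\bigcup_{s}\bigcap_{j=1}^{N(s)}E(m,\ell,t,s,j)$. A standard diagonalization then shows that any $f\in U$ satisfies (i)--(iii) for every admissible $K$, $h$, $J$ and every selection $\sigma$: the ``for every $j$'' inside the intersection handles the universal quantifier over $\sigma$, and varying $t$ with $J_t\uparrow\Omega$ (after refining so that $s$ may be taken arbitrarily large in the union) produces a single common subsequence $(p_{k_n})$.

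By Baire's theorem, the task reduces to proving density in $H(\Omega)$ of $W(m,\ell,t):=\bigcup_s\bigcap_{j=1}^{N(s)}E(m,\ell,t,s,j)$ for each fixed $(m,\ell,t)$. Fix $g\in H(\Omega)$, a compact set $L\subseteq\Omega$ with $L\supseteq J_t$ and connected complement, and $\varepsilon>0$. Since $K_m\cup L$ is compact with connected complement, Mergelyan's theorem provides a polynomial $P$ with $\sup_{K_m}|P-P_\ell|<\varepsilon/2$ and $\sup_{L}|P-g|<\varepsilon/2$. Pick $s$ with $p_s>\deg P$ (possible since $p_s\to+\infty$) and define $\tilde f(z)=P(z)+\delta(z-\zeta)^{p_s}$, where $\delta\neq 0$ will be chosen small.

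The crucial observation, and the one that dissolves the main obstacle of \emph{simultaneous} uniqueness of the Pad\'e approximants for all $j\in\{1,\ldots,N(s)\}$, is the following: the Taylor coefficients $a_n$ of $\tilde f$ at $\zeta$ satisfy $a_{p_s}=\delta$ and $a_n=0$ for all $n>p_s$. Therefore, for every $q\geq 1$ the denominator-defining Hankel system for the $(p_s,q)$-Pad\'e approximant of $\tilde f$ has matrix $M=(a_{p_s+k-i})_{k,i=1}^{q}$, which is upper triangular with $\delta$ on the diagonal, so $\det M=\delta^q\neq 0$. Consequently $\tilde f\in D_{p_s,q^{(s)}_j}(\zeta)$ for every $j$ simultaneously, and in each case the unique Pad\'e approximant is $[\tilde f;p_s/q^{(s)}_j]_\zeta=\tilde f$ itself. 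Choosing $\delta$ so small that $\sup_{K_m\cup L}|\delta(z-\zeta)^{p_s}|<\min(\varepsilon/2,1/(2t))$ yields $\sup_{K_m}|[\tilde f;p_s/q^{(s)}_j]_\zeta-P_\ell|<1/t$, $\sup_{J_t}|[\tilde f;p_s/q^{(s)}_j]_\zeta-\tilde f|=0$, and $\sup_L|\tilde f-g|<\varepsilon$ uniformly in $j$. Thus $\tilde f\in W(m,\ell,t)$, density is established, and Baire's theorem delivers the dense $G_\delta$ set $U$ of functions satisfying the theorem.
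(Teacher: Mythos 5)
Your proof is correct and follows essentially the same Baire-category approach as the paper: the paper proves Theorem 1.1 as the $L=\{\zeta\}$ specialization of Theorem 3.3, using sets $A(i,s,m,n,j)$ and $B(s,k,n,j)$ (which your $E(m,\ell,t,s,j)$ merge into one, with $t$ doing double duty for the tolerance $1/t$ and the exhaustion index), openness of these sets cited from earlier work, and density proved via Mergelyan plus the perturbation $u=p+d z^{p_{k_n}}$, with the membership in all $D_{p_{k_n},q_j^{(k_n)}}(\zeta)$ deduced from Proposition 2.4 applied to a polynomial of degree exactly $p_{k_n}$. Your only departure is that you verify $\tilde f\in D_{p_s,q}(\zeta)$ by directly exhibiting the Hankel matrix as upper triangular with diagonal $\delta$, rather than invoking Proposition 2.4(iii) from $D_{p_s,0}(\zeta)$; both are fine. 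One small slip: in the density step you apply Mergelyan with error $\varepsilon/2$ on $K_m$, but the membership condition requires the error against $P_\ell$ over $K_m$ to be below $1/t$, which does not follow unless $\varepsilon<1/t$; the paper handles this by assuming WLOG $\varepsilon<1/s$ at the outset, and you should add the analogous WLOG assumption $\varepsilon<1/t$ (or take the Mergelyan error to be $\min(\varepsilon/2,\,1/(2t))$).
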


Furthermore we obtain a similar result where we have one $q_n$ and finite many $p^{(n)}_1, \cdots, p^{(n)}_{N(n)} \in \mathbb{N}$ satisfying $\min \{ p^{(n)}_1, \cdots, p^{(n)}_{N(n)} \} \to + \infty$. See Theorem \ref{theorem 4.1} below.

These results are contained in section $3$. In section $4$ we prove the analogue results of Type II. We mention that we can obtain similar results for all centers of expansion simultaneously to be generic on a subspace of $A^{\infty}(\Omega)$ as well as of Seleznev type in the space of all formal power series. In section $5$ we modify the known construction of universal Taylor series and we conclude that the class $\mathcal{A}$ of all functions satisfying Theorem \ref{theorem 1.1} contains a dense and affine supspace of $H(\Omega)$. Further, we prove algebraic genericity for a class of functions slightly larger than the class $\mathcal{A}$. The difference is that instead of $(i)$ of Theorem \ref{theorem 1.1} we demand only that the functions $[f; p_{k_n} / q^{(k_n)}_{\sigma(k_n)}]_{\zeta}$ exist.

Finally section $2$ contains some preliminaries for Pad\'{e} approximants and the chordal distance.

\section{Preliminaries} 

\begin{definition} \label{definition 2.1}

Let $\zeta \in \mathbb{C}$ and $f$ a formal power series with center $\zeta$:
$$ f(z) = \sum_{k = 0}^{+ \infty} a_k (z - \zeta)^k. $$

Now, for every $p, q \in \mathbb{N}$ we consider a function of the form:
$$ [f; p / q]_{\zeta}(z) = \frac{A(z)}{B(z)} $$

where the functions $A(z)$ and $B(z)$ are polynomials such that $deg(A(z)) \leq p$, $deg(B(z)) \leq q$, $B(\zeta) = 1$ and also the Taylor expansion of the function $[f; p / q]_{\zeta}(z) = \sum_{k = 0}^{+ \infty} b_k (z - \zeta)^k$ (with center $\zeta \in \mathbb{C}$) satisfies:
$$ a_k = b_k \; \text{for every} \; k = 0, \cdots p + q. $$

If such a rational function exists, it is called the $(p, q)$ - Pad\'{e} approximant of $f$. Very often the power series $f$ is the Taylor development of a holomorphic function with center $\zeta$; a point in its domain of definition.

\end{definition}

\begin{remark} \label{remark 2.2}

According to Definition \ref{definition 2.1} we obtain that for $q = 0$ the $(p, 0)$ - Pad\'{e} approximant of $f$ exists trivially for every $p \in \mathbb{N}$, since:
$$ [f; p / 0]_{\zeta}(z) = \sum_{k = 0}^{p} a_k (z - \zeta)^k $$

for every $z \in \mathbb{C}$.
\end{remark}

\begin{remark} \label{remark 2.3}

For $q \geq 1$ Definition \ref{definition 2.1} does not necessarily implies the existence of Pad\'{e} approximants. However, if a Pad\'{e} approximant exists then it is unique as a rational funtion. It is known (\cite{BAKER.GRAVES - MORRIS}) that a necessary and sufficient condition for the existence and uniqueness of the polynomials $A(z)$ and $B(z)$ above is that the following $q \times q$ Hankel determinant:

$$ D_{p, q}(f, \zeta) = det
\begin{vmatrix}
a_{p - q + 1} & a_{p - q + 2} & \cdots & a_{p} \\ 
a_{p - q + 2} & a_{p - q + 3} & \cdots & a_{p + 1} \\
\vdots & \vdots & \ddots &\vdots \\
a_{p} & a_{p + 1} & \cdots & a_{p + q + 1} \\ 
\end{vmatrix} $$

is not equal to $0$, i.e. $D_{p, q}(f, \zeta)  \neq 0$. In the previous determinant we set $a_k = 0$ for every $k < 0$. In addition, if $D_{p, q}(f, \zeta)  \neq 0$ we also write $f \in D_{p, q}(\zeta)$.

In this case, the $(p, q)$ - Pad\'{e} approximant of $f$ (with center $\zeta \in \mathbb{C}$) is given by the following formula:
$$ [f; p / q]_{\zeta}(z) = \frac{A(f, \zeta)(z)}{B(f, \zeta)(z)} $$

where
$$ A(f, \zeta)(z) = $$
$$ = det
\begin{vmatrix}
(z - \zeta)^{q} S_{p - q}(f, \zeta)(z) & (z - \zeta)^{q - 1} S_{p - q + 1}(f, \zeta)(z) & \cdots & S_{p}(f, \zeta)(z) \\ 
a_{p - q + 1} & a_{p - q + 2} & \cdots & a_{p + 1} \\
\vdots & \vdots & \ddots &\vdots \\
a_{p} & a_{p + 1} & \cdots & a_{p + q} \\ 
\end{vmatrix} $$

\bigskip

and
$$ B(f, \zeta)(z) = det
\begin{vmatrix}
(z - \zeta)^{q} & (z - \zeta)^{q - 1} & \cdots & 1 \\ 
a_{p - q + 1} & a_{p - q + 2} & \cdots & a_{p + 1} \\
\vdots & \vdots & \ddots &\vdots \\
a_{p} & a_{p + 1} & \cdots & a_{p + q} \\ 
\end{vmatrix} $$

\bigskip

with
$$ S_k(f, \zeta)(z) = \begin{cases}
\sum_{i = 0}^{k} a_i (z - \zeta)^i &\mbox{if} \; k \geq 0 \\
0 &\mbox{if} \; k < 0 
\end{cases}. $$

The previous relations are called Jacobi formulas. Also, in this case, we notice that the polynomials $A(f, \zeta)(z)$ and $B(f, \zeta)(z)$ do not have any common zeros in $\mathbb{C}$ provided that $f \in D_{p, q}(\zeta)$.

\end{remark}

We will also make use of the following proposition.

\begin{proposition} \label{proposition 2.4} (\cite{BAKER.GRAVES - MORRIS}) Let $f(z) = \frac{A(z)}{B(z)}$ be a rational function where the functions $A(z)$ and $B(z)$ are polynomials with $deg(A(z)) = p_0$ and $deg(B(z))$ $= q_0$. In addition, suppose that $A(z)$ and $B(z)$ do not have any common zero in $\mathbb{C}$. Then for every $\zeta \in \mathbb{C}$ such that $B(\zeta) \neq 0$ we have:

\begin{itemize}

\item[(i)]
$f \in D_{p_0, q_0} (\zeta)$.

\item[(ii)]
$f \in D_{p, q_0} (\zeta)$ for every $p \geq p_0$.

\item[(iii)]
$f \in D_{p_0, q} (\zeta)$ for every $q \geq q_0$.

\end{itemize}

Moreover, for every $(p, q) \in \mathbb{N} \times \mathbb{N}$ with $p > p_0$ and $q > q_0$ we have:
$$ f \not\in D_{p, q} (\zeta). $$

In all cases above we obtain that $f(z) \equiv [f; p / q]_{\zeta}(z)$.

\end{proposition}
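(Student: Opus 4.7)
My plan is to first normalize so that $B(\zeta) = 1$ (divide both $A$ and $B$ by $B(\zeta) \neq 0$); this does not change $f$, the degrees, or the coprimality of $A$ and $B$. In each of the three cases (i)--(iii), the pair $(A, B)$ itself obviously satisfies all defining conditions for a $(p,q)$-Pad\'e pair: the degree bounds are $\deg A = p_0 \leq p$ and $\deg B = q_0 \leq q$, the normalization $B(\zeta) = 1$ holds, and since the quotient already equals $f$, all Taylor coefficients trivially agree. So the approximant exists and equals $f$; the whole content of (i)--(iii) is uniqueness of the representing polynomials.

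The uniqueness will rest on the following standard lemma, which I would establish first: if $(A_1, B_1)$ is any other admissible pair at level $(p,q)$, then $R(z) := A_1(z) B(z) - A(z) B_1(z)$ is a polynomial of degree at most $p + q$ whose power series expansion at $\zeta$ vanishes up to order $p + q$. Indeed, $R/(B B_1)$ equals $A_1/B_1 - A/B$, whose first $p+q+1$ Taylor coefficients at $\zeta$ vanish by the matching condition, and $B(\zeta) B_1(\zeta) = 1 \neq 0$. A polynomial of degree $\leq p+q$ divisible by $(z-\zeta)^{p+q+1}$ must be identically zero, so $A_1 B = A B_1$.

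Now I would use coprimality. From $A_1 B = A B_1$ and $\gcd(A, B) = 1$ in $\mathbb{C}[z]$, one obtains $B \mid B_1$ and $A \mid A_1$; write $B_1 = C B$ and $A_1 = C A$ for a common polynomial factor $C$ with $C(\zeta) = 1$. In cases (i) and (ii) the bound $\deg B_1 \leq q_0 = \deg B$ forces $\deg C = 0$, hence $C \equiv 1$; in case (iii) the bound $\deg A_1 \leq p_0 = \deg A$ forces the same conclusion. In every case $A_1 = A$ and $B_1 = B$, proving uniqueness and, since $(A,B)$ was admissible, the identity $[f;p/q]_\zeta \equiv f$.

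For the ``moreover'' part, when $p > p_0$ and $q > q_0$ I would exhibit non-uniqueness explicitly: for any $c \in \mathbb{C}$, set $C_c(z) = 1 + c(z - \zeta)$ and take the pair $(C_c A, C_c B)$. Its degrees are at most $p_0 + 1 \leq p$ and $q_0 + 1 \leq q$, we have $(C_c B)(\zeta) = 1$, and the quotient is still $f$ so the Taylor coefficients match to all orders. Distinct values of $c$ give distinct pairs, so uniqueness fails and $f \notin D_{p,q}(\zeta)$. The entire argument is straightforward degree counting combined with the coprimality of $A$ and $B$; the only mild subtlety is keeping track, in case (iii), that it is the tightness of the degree bound on $A$ rather than on $B$ that kills the common factor.
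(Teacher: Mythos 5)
Your argument is correct, and it is the standard proof of this classical fact. The key lemma is exactly right: for any admissible pair $(A_1, B_1)$, the polynomial $R = A_1 B - A B_1$ has degree at most $p+q$ (since $\deg A_1 \leq p$, $\deg B = q_0 \leq q$, $\deg A = p_0 \leq p$, $\deg B_1 \leq q$), while $R = B B_1 \bigl(A_1/B_1 - A/B\bigr)$ vanishes to order $p+q+1$ at $\zeta$ because both $A_1/B_1$ and $A/B = f$ share the first $p+q+1$ Taylor coefficients and $B(\zeta) B_1(\zeta) \neq 0$; hence $R \equiv 0$. Coprimality of $A$ and $B$ in $\mathbb{C}[z]$ then forces $B \mid B_1$, and cancelling $B$ from $A_1 B = A B_1$ gives the same cofactor $C$ with $A_1 = CA$, $B_1 = CB$, $C(\zeta) = 1$. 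In cases (i) and (ii) the bound $\deg B_1 \leq q_0$ kills $C$, and in case (iii) the bound $\deg A_1 \leq p_0$ does — you correctly flag that this switch is the only point requiring attention. The explicit one-parameter family $(C_c A, C_c B)$ with $C_c(z) = 1 + c(z-\zeta)$ cleanly exhibits non-uniqueness of the representing polynomials when both $p > p_0$ and $q > q_0$, which is precisely what $f \notin D_{p,q}(\zeta)$ means under the paper's definition. Be aware that the paper gives no proof of Proposition~\ref{proposition 2.4} at all; it is quoted directly from Baker and Graves-Morris, so there is no in-paper argument to compare against, but your derivation matches the standard treatment of that result.
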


In some of the following results we make use of the chordal metric $\chi$, a metric defined on $\mathbb{C} \cup \{ \infty \}$. This is a well - known metric, given by the following relations:

For every $a, b \in \mathbb{C} \cup \{ \infty \}$, it holds:
$$ \chi(a, b) = \begin{cases} \frac{|a - b|}{\sqrt{1 + a^2} \cdot \sqrt{1 + b^2}} &\mbox{if} \; a, b \in \mathbb{C} 
\\ 
\frac{1}{\sqrt{1 + a^2}} &\mbox{if} \; a \in \mathbb{C} \; \text{and} \; b = \infty \\ 

0 &\mbox{if} \;a = b = \infty
 \end{cases} $$

We notice that for every $a, b \in \mathbb{C}$ it holds $\chi(a, b) \leq |a - b|$. In addition, $\chi(a, b) = \chi(\frac{1}{a}, \frac{1}{b})$ for every $a, b \in \mathbb{C} \cup \{ \infty \}$.

If $K \subseteq \mathbb{C}$ is a set and a sequence of functions $\{ f_n \}_{n \geq 1}: K \to \mathbb{C}$ converges uniformly to a function $f: K \to \mathbb{C}$ with respect to the Euclidean metric $|\cdot|$, so does with respect to $\chi$. Moreover, the metrics $|\cdot|$ and $\chi$ are uniformly equivalent on every compact subset $K$ of $\mathbb{C}$. Thus, if $\{ f_n \}_{n \geq 1}: E \to K$ converges $\chi$ - uniformly on $E$ towards a function $f: E \to K$, then automatically the convergence is also uniform with respect to $|\cdot|$.

\section{Simultaneous approximation for Universal Pad\'{e} approximants of Type I} 

We will need the following well - known lemmas.

\begin{lemma} \label{lemma 3.1} (\cite{MELAS.NESTORIDIS}, \cite{NESTORIDIS2}, \cite{NESTORIDIS4}) Let $\Omega$ be a domain in $\mathbb{C}$. Then there exists a sequence $\{ K_m \}_{m \geq 1}$ of compact subsets of $\mathbb{C} \setminus \Omega$ with connected complements, such that for every compact set $K \subseteq \mathbb{C} \setminus \Omega$ with connected complement, there exists an index $m \in \mathbb{N}$ satisfying $K \subseteq K_m$.

\end{lemma}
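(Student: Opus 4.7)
The plan is to reduce the uncountable family of admissible compact sets to a countable one via an explicit exhaustion by "rational" compact sets. I would enumerate compact sets of the form
$$L_{r, U} = \overline{D(0, r)} \setminus U,$$
where $r \in \mathbb{Q}_{>0}$ and $U = \bigcup_{i=1}^{N} D(a_i, \rho_i)$ is a finite union of open disks with centers $a_i \in \mathbb{Q} + i\mathbb{Q}$ and rational positive radii $\rho_i$, and keep only those $L_{r, U}$ that already satisfy both $L_{r, U} \subseteq \mathbb{C} \setminus \Omega$ and $\mathbb{C} \setminus L_{r, U}$ connected. Countably many such configurations produce the sequence $\{K_m\}_{m \geq 1}$, each member automatically enjoying the three properties demanded by the lemma.

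Given an arbitrary admissible compact $K \subseteq \mathbb{C} \setminus \Omega$ with $\mathbb{C} \setminus K$ connected, I would first pick rational $r$ with $K \subseteq D(0, r)$ and then construct a finite rational-disk union $U$ satisfying three conditions: (a) $\Omega \cap \overline{D(0, r)} \subseteq U$, so that $L_{r, U} \subseteq \mathbb{C} \setminus \Omega$; (b) $U \cap K = \emptyset$, so that $K \subseteq L_{r, U}$; (c) $U \cup \{|z| > r\}$ is connected, which guarantees $\mathbb{C} \setminus L_{r, U}$ is connected. Constructed this way, $L_{r, U}$ serves as the desired $K_m$.

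The main obstacle is the simultaneous realisation of (a) and (b) when $K$ meets $\partial\Omega$: in that situation $\mathrm{dist}(K, \Omega) = 0$ and no fixed positive-distance thickening of $\Omega$ is automatically disjoint from $K$. The key observation is that a rational open disk $D(a, \rho)$ is disjoint from $K$ precisely when $\mathrm{dist}(a, K) \geq \rho$, with equality allowed, so disks whose boundary circles just touch $K$ are admissible; combined with the openness of $\Omega$ and the inclusion $\Omega \subseteq \mathbb{C} \setminus K$, this flexibility yields a finite rational cover of $\Omega \cap \overline{D(0, r)}$ by a standard compactness argument applied to the closure (points of $\partial\Omega$ that lie in $K$ are unproblematic, since the cover need not reach them). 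Property (c) is then arranged by appending a short chain of rational open disks inside $\mathbb{C} \setminus K$ forming a corridor from $U$ to $\{|z| > r\}$; its existence follows from the path-connectedness of the open connected set $\mathbb{C} \setminus K$.
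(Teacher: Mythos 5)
Your reduction to a countable family of ``rational'' compact sets is the natural first move, but the covering step does not work. Conditions (a) and (b) cannot in general be met simultaneously by any finite union $U$ of open disks, and the compactness argument you invoke does not repair this: $\Omega\cap\overline{D(0,r)}$ is typically not compact, and it can accumulate on $K$ from inside arbitrarily narrow channels that no single disk disjoint from $K$ can traverse.

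Here is a concrete obstruction. Let
$$
T \;=\; \bigl([0,1]\times\{0\}\bigr)\cup\bigl(\{0\}\times[0,1]\bigr)\cup\bigcup_{n\geq 1}\bigl(\{1/n\}\times[0,1]\bigr),
$$
a compact connected ``comb'' with connected complement, and put $\Omega=\mathbb{C}\setminus T$ (a domain) and $K=T\subseteq\mathbb{C}\setminus\Omega$ (admissible). Fix any rational $r\geq 2$, so $K\subseteq D(0,r)$. The points $z_n=(x_n,\,1/2)$ with $x_n=\frac{1}{2}\left(\frac{1}{n}+\frac{1}{n+1}\right)$ lie in $\Omega\cap\overline{D(0,r)}$, strictly between the teeth at $x=1/(n+1)$ and $x=1/n$. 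If a convex open set $C$ disjoint from $K$ contained two of them, say $z_n$ and $z_{n'}$ with $n<n'$, it would contain the horizontal segment joining them; since $x_{n'}<1/n'\leq 1/(n+1)<x_n$, that segment passes through $(1/(n+1),\,1/2)\in K$, a contradiction. Hence every open disk disjoint from $K$ contains at most one $z_n$, and any finite union of such disks omits infinitely many $z_n\in\Omega\cap\overline{D(0,r)}$: conditions (a) and (b) are simply incompatible in this example. Your parenthetical remark that points of $\partial\Omega$ lying in $K$ ``need not be reached'' misses the actual danger, which comes not from those boundary points but from the points of $\Omega$ converging to them through infinitely many mutually separated channels, each demanding its own disk; the tangency trick for $\partial D(a,\rho)$ touching $K$ cannot compensate for this.

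The example also shows that the flaw is structural and not fixable by replacing disks with squares, rectangles, or any other $\Omega$-independent rational shapes: for $\Omega=\mathbb{C}\setminus T$, any admissible $K_m$ containing $K=T$ must satisfy $K_m\subseteq\mathbb{C}\setminus\Omega=T$, hence $K_m=T$, and $T$ is not of the form $\overline{D(0,r)}\setminus U$ (indeed $T$ has empty interior and rectilinear structure). The lemma itself is of course true --- in this example the constant family $K_m=T$ works --- but a correct proof must manufacture the $K_m$ from $\mathbb{C}\setminus\Omega$ itself (for instance starting from $\overline{D(0,m)}\cap(\mathbb{C}\setminus\Omega)$ and carving out countably many rational ``bridges'' to enforce connected complement), rather than from a universal list of disk-complements; this is what the cited references do.
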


\begin{lemma} \label{lemma 3.2} (Existence of exhausting family; \cite{RUDIN}) Let $\Omega$ be an open set in $\mathbb{C}$. Then there exists a sequence $\{ L_k \}_{k \geq 1}$ of compact subsets of $\Omega$ such that:

\begin{itemize}

\item[(i)] 
$L_k \subseteq L_{k + 1}^o$ for every $k \in \mathbb{N}$.

\item[(ii)]
For every compact set $L \subseteq \Omega$ there exists an index $k \in \mathbb{N}$ such that $L \subseteq L_k$.

\item[(iii)]
Every connected component of $\tilde{\mathbb{C}} \setminus L_k$ contains at least one connected component of $\tilde{\mathbb{C}} \setminus \Omega$.
\end{itemize}
 
\end{lemma}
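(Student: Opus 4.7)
The plan is to construct the exhaustion in two stages: first produce a basic family that handles (i) and (ii), then modify it to enforce the topological condition (iii). For the first stage I set
$$K_n = \{z \in \mathbb{C} : |z| \leq n\} \cap \{z : d(z, \mathbb{C}\setminus\Omega) \geq 1/n\},$$
with the convention $d(\,\cdot\,, \emptyset) = +\infty$ in the case $\Omega = \mathbb{C}$. Since the distance function is continuous, each $K_n$ is closed and bounded, hence compact, and the strict versions of the defining inequalities hold on $K_{n+1}^o$, yielding $K_n \subseteq K_{n+1}^o$. Any compact $L \subseteq \Omega$ is bounded and has positive distance from $\mathbb{C}\setminus\Omega$, so it lies in $K_n$ for all $n$ large enough; this establishes (ii) already at the $K_n$ level. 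However, (iii) can fail for $K_n$, because a bounded component of $\tilde{\mathbb{C}}\setminus K_n$ might be entirely inside $\Omega$ and thus miss $\tilde{\mathbb{C}}\setminus\Omega$.

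For the second stage I define
$$L_n = \tilde{\mathbb{C}} \setminus W_n,$$
where $W_n$ is the union of those connected components of $\tilde{\mathbb{C}}\setminus K_n$ that meet $\tilde{\mathbb{C}}\setminus\Omega$. Equivalently, $L_n$ is obtained from $K_n$ by filling in every bounded complementary component whose closure sits inside $\Omega$. Since $\tilde{\mathbb{C}}$ is locally connected, every component of $\tilde{\mathbb{C}}\setminus K_n$ is open, so $W_n$ is open and $L_n$ is closed in $\tilde{\mathbb{C}}$. Boundedness in $\mathbb{C}$ follows from $K_n \subseteq \overline{D(0,n)}$, which forces each bounded component of its complement to lie in $\overline{D(0,n)}$ as well; hence $L_n$ is compact in $\mathbb{C}$, and $L_n \subseteq \Omega$ since each filled-in component avoids $\tilde{\mathbb{C}}\setminus\Omega$. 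Property (iii) is then automatic: the components of $\tilde{\mathbb{C}}\setminus L_n$ are exactly those components of $\tilde{\mathbb{C}}\setminus K_n$ that were not filled in, i.e., those meeting $\tilde{\mathbb{C}}\setminus\Omega$, and each such component then contains the entire connected component of $\tilde{\mathbb{C}}\setminus\Omega$ that it meets. Property (ii) transfers from $K_n$ to $L_n$ since $L_n \supseteq K_n$.

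The step requiring genuine care, which I expect to be the main obstacle, is (i), i.e., $L_n \subseteq L_{n+1}^o$. For the $K_n$-part this is immediate from $K_n \subseteq K_{n+1}^o \subseteq L_{n+1}^o$. The delicate case is a point $z$ lying in a filled-in bounded component $V \subseteq \Omega$ of $\tilde{\mathbb{C}}\setminus K_n$. My plan is to exploit the nesting $K_n \subseteq K_{n+1}$: if $z \in K_{n+1}$ there is nothing to prove; otherwise the component $U$ of $\tilde{\mathbb{C}}\setminus K_{n+1}$ containing $z$ is a connected subset of $\tilde{\mathbb{C}}\setminus K_n$ through $z$, so $U \subseteq V$, whence $U$ is bounded and contained in $\Omega$. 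Thus $U$ is one of the components used to build $L_{n+1}$, so $z \in U \subseteq L_{n+1}$. In either case $V \subseteq L_{n+1}$, and since $V$ is open, $V \subseteq L_{n+1}^o$. This monotone component-nesting observation is what makes the fill-in construction compatible with the interior-containment condition (i), and it is the point on which the argument hinges.
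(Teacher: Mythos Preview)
Your proof is correct. The paper, however, does not prove this lemma at all: it is stated with a citation to Rudin's \emph{Real and Complex Analysis} and used as a known black box throughout. So there is no ``paper's own proof'' to compare against.

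That said, your construction is essentially the standard one found in Rudin: start with the crude exhaustion $K_n = \overline{D(0,n)} \cap \{z : d(z,\mathbb{C}\setminus\Omega) \ge 1/n\}$ and then fill in the complementary components that miss $\tilde{\mathbb{C}}\setminus\Omega$. Your treatment of (iii) and of the nesting (i) for the filled-in components is clean; the key observation that a component $U$ of $\tilde{\mathbb{C}}\setminus K_{n+1}$ through a point of a filled-in $V$ must satisfy $U \subseteq V$ (hence $U \subseteq \Omega$, hence $U$ is itself filled in at level $n+1$) is exactly the right monotonicity argument. One very minor phrasing point: when you write ``if $z \in K_{n+1}$ there is nothing to prove,'' you are at that moment only establishing $V \subseteq L_{n+1}$, not $V \subseteq L_{n+1}^o$; the passage to the interior comes afterwards from the openness of $V$, as you note. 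The logic is fine, just be aware of what is being claimed at each line.
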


We present now the main theorem of this section.

\begin{theorem} \label{theorem 3.3}

Let $\Omega \subseteq \mathbb{C}$ be a simply connected domain and $L \subseteq \Omega$ be a compact set. We consider a sequence $(p_n)_{n \geq 1} \subseteq \mathbb{N}$ with $p_n \to + \infty$. Now, for every $n \in \mathbb{N}$ let $q_{1}^{(n)}, q_{2}^{(n)}, \cdots, q_{N(n)}^{(n)} \in \mathbb{N}$, where $N(n)$ is another natural number. Then there exists a function $f \in H(\Omega)$ satisfying the following:

For every compact set $K \subseteq \mathbb{C} \setminus \Omega$ with connected complement and for every function $h \in A(K)$, there exists a subsequence $(p_{k_n})_{n \geq 1}$ of the sequence $(p_n)_{n \geq 1}$ such that:

\begin{itemize}

\item[(1)]
$f \in D_{p_{k_n}, q_{j}^{(k_n)}}(\zeta)$ for every $\zeta \in L$, for every $n \in \mathbb{N}$ and for every $j \in \{ 1, \cdots, N(k_n) \}$.

\item[(2)]
$\max_{j = 1, \cdots, N(k_n)} \sup_{\zeta \in L} \sup_{z \in K} |[f; p_{k_n} / q_{j}^{(k_n)}]_{\zeta}(z) - h(z)| \to 0 \; \text{as} \; n \to + \infty$.

\item[(3)]
For every compact set $J \subseteq \Omega$ it holds:
$$ \max_{j = 1, \cdots, N(k_n)} \sup_{\zeta \in L} \sup_{z \in J} |[f; p_{k_n} / q_{j}^{(k_n)}]_{\zeta}(z) - f(z)| \to 0 \; \text{as} \; n \to + \infty. $$

\end{itemize}

Moreover, the set of all functions $f$ satisfing $(1) - (3)$ is dense and $G_{\delta}$ in $H(\Omega)$.

\end{theorem}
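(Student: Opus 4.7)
The approach is a standard Baire category argument in the Fr\'echet space $H(\Omega)$. Using Lemma \ref{lemma 3.1}, fix a sequence $(K_m)$ of compact subsets of $\mathbb{C}\setminus\Omega$ with connected complement; using Lemma \ref{lemma 3.2}, fix an exhausting sequence $(L_t)$ of $\Omega$ with $L \subseteq L_1$; and for each $m$, fix a countable dense subset $\{h_{m,s}\}_s$ of $A(K_m)$. For $(m,s,t,n) \in \mathbb{N}^4$, let $E(m,s,t,n)$ consist of all $f \in H(\Omega)$ satisfying (a) $f \in D_{p_n,q_j^{(n)}}(\zeta)$ for every $\zeta \in L$ and every $j \in \{1,\dots,N(n)\}$, (b) $\max_j \sup_{\zeta \in L}\sup_{z \in K_m}\bigl|[f;p_n/q_j^{(n)}]_\zeta(z) - h_{m,s}(z)\bigr| < 1/t$, and (c) the analogous supremum with $h_{m,s}$ and $K_m$ replaced by $f$ and $L_t$ is less than $1/t$. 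The desired class will be the set $\mathcal{U} := \bigcap_{m,s,t,n_0} \bigcup_{n \geq n_0} E(m,s,t,n)$, the additional index $n_0$ ensuring that arbitrarily large indices $n$ are always available when extracting the required subsequence.

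\textbf{Density} of $\bigcup_{n \geq n_0} E(m,s,t,n)$ is the core of the argument. Given $f_0 \in H(\Omega)$, $\varepsilon > 0$, and a compact $L' \subseteq \Omega$ on which approximation is desired (enlarged so as to contain $L \cup L_t$), the compact set $K_m \cup L'$ has connected complement in $\mathbb{C}$ by simple connectivity of $\Omega$ together with property (iii) of Lemma \ref{lemma 3.2}. Mergelyan's theorem then yields a polynomial $P$ with $|P - f_0| < \varepsilon/2$ on $L'$ and $|P - h_{m,s}| < 1/(2t)$ on $K_m$. I pick $n \geq n_0$ with $p_n > \deg P$, fix some $\zeta_0 \in L$, and set $\tilde{P}(z) := P(z) + \delta\,(z-\zeta_0)^{p_n}$ with $\delta \neq 0$ small enough that the perturbation is bounded by $\min(\varepsilon/2,1/(2t))$ on $L' \cup K_m$. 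Then $\tilde{P}$ is a polynomial of degree \emph{exactly} $p_n$, so Proposition \ref{proposition 2.4} (applied with $p_0 = p_n$, $q_0 = 0$, $B \equiv 1$) gives $\tilde{P} \in D_{p_n,q}(\zeta)$ for every $q \geq 0$ and every $\zeta \in \mathbb{C}$, with $[\tilde{P};p_n/q]_\zeta \equiv \tilde{P}$. Taking $f := \tilde{P}$ witnesses (a) with nothing to check, (b) from the Mergelyan estimate combined with the perturbation bound, and (c) trivially, since $[\tilde{P};p_n/q_j^{(n)}]_\zeta - \tilde{P} \equiv 0$.

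\textbf{Openness} of $E(m,s,t,n)$ is the technical bookkeeping: the Hankel determinant $D_{p_n,q_j^{(n)}}(f,\zeta)$ is a polynomial in the Taylor coefficients $f^{(k)}(\zeta)/k!$, hence jointly continuous in $(f,\zeta) \in H(\Omega) \times L$, so compactness of $L$ makes condition (a) stable under small perturbations of $f$; the Jacobi formulas of Remark \ref{remark 2.3} then render $(f,\zeta) \mapsto [f;p_n/q_j^{(n)}]_\zeta$ jointly continuous with uniform control on $K_m$ and on $L_t$, which yields openness of (b) and (c). The main obstacle throughout is precisely this uniformity in $\zeta \in L$---both the density step and the openness step must be verified simultaneously for every center in $L$---and it is the combination of compactness of $L$ with the explicit Jacobi representation of the Pad\'e approximant that makes this tractable. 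Once Baire's theorem produces the dense $G_\delta$ set $\mathcal{U}$, the subsequence $(p_{k_n})$ required for a given $K \subseteq K_m$ and $h \in A(K)$ is obtained by a standard diagonal extraction: approximate $h$ by $h_{m,s_\ell}$ within $1/\ell$ on $K$, and inductively choose $k_\ell > k_{\ell-1}$ with $f \in E(m,s_\ell,\ell,k_\ell)$; this delivers (1)--(3).
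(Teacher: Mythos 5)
Your proof is correct and follows the same Baire-category strategy as the paper: a $G_\delta$ decomposition indexed by a target polynomial/function on $K_m$, an accuracy parameter, an exhausting compact of $\Omega$, and an index of $(p_n)$; openness via the Jacobi formulas; and density via Mergelyan together with a monomial perturbation raising the degree to exactly $p_n$, so that Proposition~\ref{proposition 2.4} collapses every relevant Pad\'e approximant to the polynomial itself. The only divergences are cosmetic: your extra index $n_0$ makes the extraction of a strictly increasing subsequence $(p_{k_n})$ explicit (the paper leaves this implicit), and your ``enlarged $L'$'' should more precisely be replaced by a member $L_{n_0}$ of the exhausting family of Lemma~\ref{lemma 3.2} so that property (iii) of that lemma actually applies to guarantee $K_m\cup L_{n_0}$ has connected complement.
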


\begin{proof} Let $\{ f_i \}_{i \geq 1}$ be an enumeration of polynomials with coefficients in $\mathbb{Q} + i\mathbb{Q}$.

Now, for every $i, s, n, k, m \in \mathbb{N}$ and for every $j \in \{1, \cdots N(n) \}$ we consider the following sets:

$$ A(i, s, m, n, j) = \{ f \in H(\Omega) : f \in D_{p_{n}, q_{j}^{(n)}}(\zeta) \; \text{for every} \; \zeta \in L $$
$$ \text{and} \; \sup_{\zeta \in L} \sup_{z \in K_m} |[f; p_{n} / q_{j}^{(n)}]_{\zeta}(z) - f_i(z)| < \frac{1}{s}\}. $$ 

$$ A(i, s, m, n) = \{ f \in H(\Omega) : f \in D_{p_{n}, q_{j}^{(n)}}(\zeta) \; \text{for every} \; \zeta \in L $$
$$ \text{and for every} \; j = 1, 2, \cdots, N(n) \; \text{and also} $$
$$ \max_{j = 1, \cdots, N(n)} \sup_{\zeta \in L} \sup_{z \in K_m} |[f; p_{n} / q_{j}^{(n)}]_{\zeta}(z) - f_i(z)| < \frac{1}{s}\} \equiv \bigcap_{j = 1}^{N(n)} A(i, s, m, n, j). $$ 

$$ B(s, k, n, j) = \{ f \in H(\Omega) : f \in D_{p_{n}, q_{j}^{(n)}}(\zeta) \; \text{for every} \; \zeta \in L $$
$$ \text{and} \; \sup_{\zeta \in L} \sup_{z \in L_k} |[f; p_{n} / q_{j}^{(n)}]_{\zeta}(z) - f(z)| < \frac{1}{s}\}. $$ 

$$ B(s, k, n) = \{ f \in H(\Omega) : f \in D_{p_{n}, q_{j}^{(n)}}(\zeta) \; \text{for every} \; \zeta \in L $$
$$ \text{and for every} \; j = 1, 2, \cdots, N(n) \; \text{and also} $$
$$ \max_{j = 1, \cdots, N(n)} \sup_{\zeta \in L} \sup_{z \in L_k} |[f; p_{n} / q_{j}^{(n)}]_{\zeta}(z) - f(z)| < \frac{1}{s}\} \equiv \bigcap_{j = 1}^{N(n)} B(s, k, n, j). $$

One can verify (mainly by using Mergelyan's theorem) that if $\mathcal{U}$ is the set of all functions satisfying $(1) - (3)$, then it holds:
$$ \mathcal{U} = \bigcap_{i, s, k, m \in \mathbb{N}} \bigcup_{n \in \mathbb{N}} A(i, s, m, n) \cap B(s, k, n) = $$
$$ \bigcap_{i, s, k, m \in \mathbb{N}} \bigcup_{n \in \mathbb{N}} \bigcap_{j = 1}^{N(n)} A(i, s, m, n, j) \cap \bigcap_{j = 1}^{N(n)} B(s, k, n, j) $$

Since $H(\Omega)$ is a complete metric space, according to Baire's theorem, it suffices to prove the following:

\begin{claim} \label{claim 3.4} For every $i, s, n, k, m \in \mathbb{N}$ and for every $j \in \{1, \cdots N(n) \}$ the sets $A(i, s, m, n, j)$ and $B(s, k, n, j)$ are open in $H(\Omega)$.

\end{claim}

\begin{claim} \label{claim 3.5} For every $i, s, k, m \in \mathbb{N}$ the set $\mathcal{U}(i, s, k, m) = \bigcup_{n \in \mathbb{N}} A(i, s, m, n) \cap B(s, k, n) = \bigcup_{n \in \mathbb{N}} \bigcap_{j = 1}^{N(n)} A(i, s, m, n, j) \cap \bigcap_{j = 1}^{N(n)} B(s, k, n, j)$ is dense in $H(\Omega)$.

\end{claim}

\subsection{\boldmath{$A(i, s, m, n, j)$ and $B(s, k, n, j)$ are open in $H(\Omega)$}}

The sets $A(i, s, m, n, j)$ have been proven to be open in \cite{MELAS.NESTORIDIS} for $q_j^{(n)} = 0$ and in \cite{MAKRIDIS1} and \cite{MAKRIDIS2} for $q_j^{(n)} \geq 1$. The sets $B(s, k, n, j)$ have been proven to be open in \cite{FOURNODAULOS.NESTORIDIS} for $q_j^{(n)} \geq 1$ and in \cite{MELAS.NESTORIDIS} for $q_j^{(n)} = 0$. For complete proofs of the above facts we also refer to \cite{MAKRIDIS2}.

\subsection{\boldmath{Density of $\mathcal{U}(i, s, k, m)$}}

In order to prove Claim \ref{claim 3.5} we fix $i, s, k, m \in \mathbb{N}$ and we want to prove that the set:
$$ \mathcal{U}(i, s, k, m) = \bigcup_{n \in \mathbb{N}} A(i, s, m, n) \cap B(s, k, n) = $$
$$ = \bigcup_{n \in \mathbb{N}} \bigcap_{j = 1}^{N(n)} A(i, s, m, n, j) \cap \bigcap_{j = 1}^{N(n)} B(s, k, n, j) $$ 

is a dense subset of $H(\Omega)$.

Let $g \in H(\Omega)$, $L' \subseteq \Omega$ be a compact set and $\varepsilon > 0$. Our aim is to find a function $u \in \mathcal{U}(i, s, k, m)$ such that $\sup_{z \in L'} |u(z) - g(z)| < \varepsilon$. There is no problem if we assume that $\varepsilon < \frac{1}{s}$. According to Lemma \ref{lemma 3.2}, we are able to find an index $n_0 \in \mathbb{N}$ satisfying $L \cup L' \cup L_k \subseteq L_{n_0}$.

Since $L_{n_0}$ and $K_m$ are disjoint compact sets with connected complements, the set $L_{n_0} \cup K_m$ is also a compact set with connected complement.

Consider now the following function:
$$ w(z) = \begin{cases} f_i(z) &\mbox{if} \; z \in K_m \\ 
g(z) &\mbox{if} \; z \in L_{n_0} \end{cases}. $$

The function $w$ is well - defined (because $L_{n_0} \cap K_m = \emptyset$) and also $w \in A(L_{n_0} \cup K_m)$. We apply Mergelyan's theorem and so we find a polynomial $p$ such that $\sup_{L_{n_0} \cup K_m} |w(z) - p(z)| < \frac{\varepsilon}{2}$. Our assumption that $p_n \to + \infty$ allows us to find an index $p_{k_n} \in \mathbb{N}$ such that $p_{k_n} > deg(p(z))$.

Let $u(z) = p(z) + dz^{p_{k_n}}$, where $d \in \mathbb{C} \setminus \{ 0 \}$ and $|d| < \frac{\varepsilon}{2} \cdot \frac{1}{\sup_{z \in L_{n_0} \cup K_m} |z^{p_{k_n}}|}$. It is immediate that the function $u$ is a polynomial with $deg(u(z)) = p_{k_n}$. We also notice that it holds $\sup_{z \in L_{n_0} \cup K_m} |u(z) - w(z)| < \varepsilon$.

In order to complete the proof we have to verify that $u \in \mathcal{U}(i, s, k, m)$; we verify the following:

\begin{itemize}

\item[(i)]
$u \in A(i, s, m, k_n, j)$ for $j = 1, \cdots, N(k_n)$. Since $u$ is a polynomial with $deg(u(z)) = p_{k_n}$ we have that for every $\zeta \in L$ it holds $u \in D_{p_{k_n}, 0} (\zeta)$. It follows that $u \in D_{p_{k_n}, q_j^{(n)}} (\zeta)$ for every $j = 1, 2, \cdots, N(k_n)$. We also have:
$$ \max_{j = 1, \cdots, N(k_n)} \sup_{\zeta \in L} \sup_{z \in K_m} |[u; p_{n} / q_{j}^{(k_n)}]_{\zeta}(z) - f_i(z)| = $$
$$ = \sup_{\zeta \in L} \sup_{z \in K_m} |[u(z) - f_i(z)| < \frac{1}{s} $$

because $[u; p_{n} / q_{j}^{(k_n)}]_{\zeta}(z) = u(z)$ for every $\zeta \in L$ and for every $j = 1, \cdots, N(k_n)$, according to Proposition \ref{proposition 2.4}.

\item[(ii)]
$u \in B(s, k, k_n, j)$ for $j = 1, \cdots, N(k_n)$. Here we have only to verify that:
$$ \max_{j = 1, \cdots, N(k_n)} \sup_{\zeta \in L} \sup_{z \in L_k} |[u; p_{k_n} / q_{j}^{(k_n)}]_{\zeta}(z) - u(z)| = 0 < \frac{1}{s}. $$

This also holds because $[u; p_{k_n} / q_{j}^{(k_n)}]_{\zeta}(z) = u(z)$.

\end{itemize}

Since $u \in \bigcap_{j = 1}^{N(n)} A(i, s, m, n, j) \cap \bigcap_{j = 1}^{N(n)} B(s, k, n, j)$, it follows that $u \in \mathcal{U}(i, s, k, m)$. Baire's theorem yields the result.

\end{proof}

We present now two consequences of Theorem \ref{theorem 3.3}.

\begin{theorem} \label{theorem 3.6}

Let $\Omega \subseteq \mathbb{C}$ be a simply connected domain and $\zeta \in \Omega$ be a fixed point. We consider a sequence $(p_n)_{n \geq 1} \subseteq \mathbb{N}$ with $p_n \to + \infty$. Now, for every $n \in \mathbb{N}$ let $q_{1}^{(n)}, q_{2}^{(n)}, \cdots, q_{N(n)}^{(n)} \in \mathbb{N}$, where $N(n)$ is another natural number. Then there exists a function $f \in H(\Omega)$ satisfying the following:

For every compact set $K \subseteq \mathbb{C} \setminus \Omega$ with connected complement and for every function $h \in A(K)$, there exists a subsequence $(p_{k_n})_{n \geq 1}$ of the sequence $(p_n)_{n \geq 1}$ such that:

\begin{itemize}

\item[(1)]
$f \in D_{p_{k_n}, q_{j}^{(k_n)}}(\zeta)$ for every $n \in \mathbb{N}$ and for every $j \in \{ 1, \cdots, N(k_n) \}$.

\item[(2)]
$\max_{j = 1, \cdots, N(k_n)} \sup_{z \in K} |[f; p_{k_n} / q_{j}^{(k_n)}]_{\zeta}(z) - h(z)| \to 0 \; \text{as} \; n \to + \infty$.

\item[(3)]
For every compact set $J \subseteq \Omega$ it holds:
$$ \max_{j = 1, \cdots, N(k_n)} \sup_{z \in J} |[f; p_{k_n} / q_{j}^{(k_n)}]_{\zeta}(z) - f(z)| \to 0 \; \text{as} \; n \to + \infty. $$

\end{itemize}

Moreover, the set of all functions $f$ satisfing $(1) - (3)$ is dense and $G_{\delta}$ in $H(\Omega)$.

\end{theorem}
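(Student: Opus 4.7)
The plan is to obtain Theorem \ref{theorem 3.6} as an immediate specialization of Theorem \ref{theorem 3.3}. The strategy is to apply Theorem \ref{theorem 3.3} with the compact set $L$ chosen to be the singleton $\{\zeta\}$, which is trivially a compact subset of $\Omega$. No new construction is needed; the conclusion of Theorem \ref{theorem 3.6} is literally what Theorem \ref{theorem 3.3} says in this degenerate case.

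More precisely, under the choice $L = \{\zeta\}$, every supremum of the form $\sup_{\zeta' \in L}(\cdots)$ appearing in conditions (1)--(3) of Theorem \ref{theorem 3.3} reduces to evaluation at the single point $\zeta$. Condition (1) of Theorem \ref{theorem 3.3} then reads $f \in D_{p_{k_n}, q_j^{(k_n)}}(\zeta)$ for every $n \in \mathbb{N}$ and every $j \in \{1, \ldots, N(k_n)\}$, which is exactly condition (1) of Theorem \ref{theorem 3.6}. Conditions (2) and (3) similarly collapse, since $\max_{j} \sup_{\zeta' \in \{\zeta\}} \sup_{z \in K} |\cdot|$ is identical to $\max_{j} \sup_{z \in K} |\cdot|$, and likewise for the compact set $J$ in condition (3). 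Therefore the set of functions $f \in H(\Omega)$ satisfying (1)--(3) of Theorem \ref{theorem 3.6} coincides exactly with the set $\mathcal{U}$ produced by Theorem \ref{theorem 3.3} for this choice of $L$.

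Since Theorem \ref{theorem 3.3} guarantees that this set is dense and $G_\delta$ in $H(\Omega)$ with its topology of uniform convergence on compacta, the same conclusion holds for the set in Theorem \ref{theorem 3.6}. Because this deduction is a direct specialization, there is no real obstacle; the only point to emphasize is that the subsequence $(p_{k_n})$ in Theorem \ref{theorem 3.3} depends on $K$ and $h$ but not on $\zeta' \in L$, which is precisely what allows us to read off the single-centre statement by restricting the already uniform-in-$\zeta'$ conclusion to a single point.
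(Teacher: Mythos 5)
Your proposal is correct and coincides exactly with the paper's proof, which simply states that Theorem \ref{theorem 3.6} follows by applying Theorem \ref{theorem 3.3} with $L = \{\zeta\}$. Your additional remarks spelling out how the suprema over $L$ collapse are just a careful expansion of the same one-line argument.
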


\begin{proof} We apply Theorem \ref{theorem 3.3} for $L = \{ \zeta \}$.

\end{proof}

\begin{theorem} \label{theorem 3.7}

Let $\Omega \subseteq \mathbb{C}$ be a simply connected domain. We consider a sequence $(p_n)_{n \geq 1} \subseteq \mathbb{N}$ with $p_n \to + \infty$. Now, for every $n \in \mathbb{N}$ let $q_{1}^{(n)}, q_{2}^{(n)}, \cdots, q_{N(n)}^{(n)} \in \mathbb{N}$, where $N(n)$ is another natural number. Then there exists a function $f \in H(\Omega)$ satisfying the following:

For every compact set $K \subseteq \mathbb{C} \setminus \Omega$ with connected complement and for every function $h \in A(K)$ there exists a subsequence $(p_{k_n})_{n \geq 1}$ of the sequence $(p_n)_{n \geq 1}$ such that:

\begin{itemize}

\item[(1)]
For every compact set $L \subseteq \Omega$, there exists a $n(L) \in \mathbb{N}$ such that $f \in D_{p_{k_n}, q_{j}^{(k_n)}}(\zeta)$ for every $\zeta \in L$, for every $n \geq n(L)$ (where $n(L)$ is a natural number depending on $L$) and for every $j \in \{ 1, \cdots, N(k_n) \}$.

\item[(2)]
$\max_{j = 1, \cdots, N(k_n)} \sup_{\zeta \in L} \sup_{z \in K} |[f; p_{k_n} / q_{j}^{(k_n)}]_{\zeta}(z) - h(z)| \to 0 \; \text{as} \; n \to + \infty$ for every compact set $L \subseteq \Omega$.

\item[(3)]
$\max_{j = 1, \cdots, N(k_n)} \sup_{\zeta \in L} \sup_{z \in L} |[f; p_{k_n} / q_{j}^{(k_n)}]_{\zeta}(z) - f(z)| \to 0 \; \text{as} \; n \to + \infty$ for every compact set $L \subseteq \Omega$.

\end{itemize}

Moreover, the set of all functions $f$ satisfing $(1) - (3)$ is dense and $G_{\delta}$ in $H(\Omega)$.

\end{theorem}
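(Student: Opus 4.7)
The plan is to reduce to Theorem \ref{theorem 3.3} via a countable exhaustion of $\Omega$, and then extract, by a diagonal argument, a single subsequence valid for every compact $L \subseteq \Omega$ at once. Fix an exhausting family $\{L_k\}_{k \geq 1}$ of compact subsets of $\Omega$ (Lemma \ref{lemma 3.2}), the compacta $\{K_m\}_{m \geq 1}$ from Lemma \ref{lemma 3.1}, and an enumeration $\{f_i\}_{i \geq 1}$ of polynomials with coefficients in $\mathbb{Q} + i\mathbb{Q}$. Mimicking the $G_\delta$ description from the proof of Theorem \ref{theorem 3.3}, for every $i, s, m, n, k, k' \in \mathbb{N}$ let $A(i, s, m, n, k)$ denote the set of $f \in H(\Omega)$ with $f \in D_{p_n, q_j^{(n)}}(\zeta)$ for every $\zeta \in L_k$ and every $j \in \{1, \dots, N(n)\}$, together with $\max_j \sup_{\zeta \in L_k} \sup_{z \in K_m} |[f; p_n / q_j^{(n)}]_\zeta(z) - f_i(z)| < 1/s$, and let $B(s, k', n, k)$ be the analogous set with $K_m$ replaced by $L_{k'}$ and $f_i$ replaced by $f$. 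Define
$$ \mathcal{V} = \bigcap_{i, s, m, k, k' \in \mathbb{N}} \bigcup_{n \in \mathbb{N}} \bigl( A(i, s, m, n, k) \cap B(s, k', n, k) \bigr). $$

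Openness of $A(i, s, m, n, k)$ and $B(s, k', n, k)$ is exactly the openness established in Theorem \ref{theorem 3.3} (with the role of $L$ there now played by $L_k$), so it follows from the references already cited in subsection 3.1 of that proof. Density of each inner union $\bigcup_n (A \cap B)$ in $H(\Omega)$ is obtained by repeating verbatim the polynomial perturbation $u(z) = p(z) + d z^{p_{k_n}}$ from the density proof of Theorem \ref{theorem 3.3}, applied to the compact set $L_{n_0} \cup K_m$ where $L_{n_0}$ is chosen (via Lemma \ref{lemma 3.2}) to contain $L_k \cup L_{k'}$ together with the compact set $L'$ on which one wishes to approximate a prescribed $g \in H(\Omega)$; Proposition \ref{proposition 2.4} guarantees that every Pad\'e approximant of $u$ appearing in $A$ and $B$ equals $u$ itself. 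Baire's theorem then makes $\mathcal{V}$ a dense $G_\delta$ subset of $H(\Omega)$.

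The only genuinely new step is the diagonal extraction. Given $f \in \mathcal{V}$, a compact $K \subseteq \mathbb{C} \setminus \Omega$ with connected complement and $h \in A(K)$, pick $m_0$ with $K \subseteq K_{m_0}$ (Lemma \ref{lemma 3.1}), and by Mergelyan's theorem select polynomials $f_{i_l}$ with $\sup_{K_{m_0}} |f_{i_l} - h| < 1/l$. Setting $s_l = k_l = k'_l = l$, the fact that $f \in \mathcal{V}$ yields indices $n_l$ with $f \in A(i_l, l, m_0, n_l, l) \cap B(l, l, n_l, l)$; because each union $\bigcup_n$ is infinite we may insist on $n_1 < n_2 < \cdots$. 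For an arbitrary compact $L \subseteq \Omega$, fix $l_0$ with $L \subseteq L_{l_0}$; then for every $l \geq l_0$ we have $L \subseteq L_{l_0} \subseteq L_l = L_{k_l} = L_{k'_l}$, so condition $(1)$ holds with $n(L) = l_0$, while the triangle inequality against $f_{i_l}$ delivers $(2)$ and the $B$-condition delivers $(3)$ along the subsequence $(p_{n_l})_{l \geq 1}$ of $(p_n)$. The main obstacle is precisely this uniformity in $L$: a naive countable intersection $\bigcap_k \mathcal{U}_{L_k}$ of the dense $G_\delta$ sets produced by Theorem \ref{theorem 3.3} would furnish a separate subsequence for each $L_k$ rather than one universal to all; encoding $k$ inside the defining sets and synchronizing $s_l = k_l = k'_l = l$ is what forces the diagonal subsequence to satisfy every $L_k$-condition from some index on.
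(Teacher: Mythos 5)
Your proof is correct and follows essentially the same route as the paper. The paper applies Theorem \ref{theorem 3.3} with $L = L_k$ for each $k$ to obtain dense $G_{\delta}$ classes $\mathcal{C}_k$ and then intersects over $k$; your set $\mathcal{V}$ is precisely $\bigcap_{k \geq 1}\mathcal{C}_k$, just written out explicitly in terms of the defining families $A$ and $B$ used in the proof of Theorem \ref{theorem 3.3}. The paper merely asserts $\mathcal{C} = \bigcap_{k \geq 1}\mathcal{C}_k$ and ``prompts the reader to verify'' it; your diagonal extraction supplies exactly the verification being requested, and that is the substantive content you add.

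Two small points. First, the phrase ``because each union $\bigcup_n$ is infinite we may insist on $n_1 < n_2 < \cdots$'' does not actually justify the strict increase: $\bigcup_n\bigl(A(i_l,l,m_0,n,l)\cap B(l,l,n,l)\bigr)$ being an infinite subset of $H(\Omega)$ is irrelevant; what is needed is that the set of admissible indices $\{ n : f \in A(i_l,l,m_0,n,l)\cap B(l,l,n,l)\}$ is unbounded. The cleanest repair is to read $f \in \mathcal{V}$ via $f \in \mathcal{C}_l$ for every $l$: membership of $f$ in $\mathcal{C}_l$ produces, for the given $K$ and $h$, an entire subsequence of $(p_n)$ along which conditions $(1)$--$(3)$ of Theorem \ref{theorem 3.3} hold with $L = L_l$ and errors tending to $0$, so admissible indices $n_l$ larger than $n_{l-1}$ with error $< 1/l$ are automatically available, and the rest of your diagonal goes through unchanged. (One could also argue at the level of $A\cap B$ by letting $s\to\infty$ with the remaining parameters fixed and invoking the identity theorem to exclude a stuck index, but that requires the additional observation that no $f\in\mathcal{V}$ is rational.) Second, your remark that ``a naive countable intersection $\bigcap_k\mathcal{U}_{L_k}$ \ldots would furnish a separate subsequence for each $L_k$'' is slightly misleading: $\mathcal{V}$ \emph{is} that intersection; the point is only that identifying it with $\mathcal{C}$ requires extracting one common subsequence, which is precisely what your diagonal does and what the paper leaves implicit.
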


\begin{proof} Let $\mathcal{C}$ be the set of all functions satisfying Theorem \ref{theorem 3.7}. We apply Theorem \ref{theorem 3.3} for $L = L_k$ (and that for every $k \in \mathbb{N}$) and so we obtain a $G_{\delta}$ - dense class in $H(\Omega)$; the class $\mathcal{C}_k$. The reader is prompted to verify that $\mathcal{C} = \cap_{k \geq 1} \mathcal{C}_k$. The result follows from Baire's theorem. 

\end{proof}

We prove now a theorem analogue to Theorem \ref{theorem 3.3}, where the roles of $p$ and $q$ have been interchanged.

\begin{theorem} \label{theorem 3.8}

Let $\Omega \subseteq \mathbb{C}$ be a simply connected domain and $L \subseteq \Omega$ be a compact set. We consider an arbitrary sequence $(q_n)_{n \geq 1} \subseteq \mathbb{N}$ (may be bounded or unbounded). Now, for every $n \in \mathbb{N}$ let $p_{1}^{(n)}, p_{2}^{(n)}, \cdots, p_{N(n)}^{(n)} \in \mathbb{N}$, where $N(n)$ is another natural number, such that:
$$ \min_{j \in \{1, \cdots, N(n) \}} \{ p_{1}^{(n)}, p_{2}^{(n)}, \cdots, p_{N(n)}^{(n)} \} \to + \infty $$ 

as $n \to + \infty$. Then there exists a function $f \in H(\Omega)$ satisfying the following:

For every compact set $K \subseteq \mathbb{C} \setminus \Omega$ with connected complement and for every function $h \in A(K)$ there exists a subsequence $(q_{k_n})_{n \geq 1}$ of the sequence $(q_n)_{n \geq 1}$ such that:

\begin{itemize}

\item[(1)]
$f \in D_{p_{j}^{(k_n)}, q_{k_n}}(\zeta)$ for every $\zeta \in L$, for every $n \in \mathbb{N}$ and for every $j \in \{ 1, \cdots, N(k_n) \}$.

\item[(2)]
$\max_{j = 1, \cdots, N(k_n)} \sup_{\zeta \in L} \sup_{z \in K} |[f; p_{j}^{(k_n)} / q_{k_n}]_{\zeta}(z) - h(z)| \to 0 \; \text{as} \; n \to + \infty$.

\item[(3)]
For every compact set $J \subseteq \Omega$ it holds:

$$ \max_{j = 1, \cdots, N(k_n)} \sup_{\zeta \in L} \sup_{z \in J} |[f; p_{j}^{(k_n)} / q_{k_n}]_{\zeta}(z) - f(z)| \to 0 $$

$\text{as} \; n \to + \infty$.

\end{itemize}

Moreover, the set of all functions $f$ satisfing $(1) - (3)$ is dense and $G_{\delta}$ in $H(\Omega)$.

\end{theorem}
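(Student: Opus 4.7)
The plan is to mirror the proof of Theorem~\ref{theorem 3.3} step by step, swapping the roles of the $p$- and $q$-indices throughout. For every $i, s, n, k, m \in \mathbb{N}$ and $j \in \{1, \ldots, N(n)\}$ introduce sets $A'(i, s, m, n, j)$ and $B'(s, k, n, j)$ defined exactly as $A(i,s,m,n,j)$ and $B(s,k,n,j)$ in the proof of Theorem~\ref{theorem 3.3}, but with $p_n$ replaced by $p^{(n)}_j$ and $q^{(n)}_j$ replaced by $q_n$. Let $A'(i,s,m,n) = \bigcap_j A'(i,s,m,n,j)$ and $B'(s,k,n) = \bigcap_j B'(s,k,n,j)$. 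By Mergelyan's theorem together with Lemmas~\ref{lemma 3.1} and~\ref{lemma 3.2}, the class of functions satisfying $(1)$--$(3)$ equals $\bigcap_{i,s,k,m \in \mathbb{N}} \bigcup_{n \in \mathbb{N}} A'(i,s,m,n) \cap B'(s,k,n)$. Openness of each $A'(i,s,m,n,j)$ and $B'(s,k,n,j)$ in $H(\Omega)$ follows from the references cited in the proof of Theorem~\ref{theorem 3.3}. Hence, by Baire's theorem, it suffices to prove density of the union for every fixed $(i,s,k,m)$.

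Given $g \in H(\Omega)$, a compact set $L' \subseteq \Omega$, and $\epsilon > 0$, choose $n_0$ with $L \cup L' \cup L_k \subseteq L_{n_0}$ and define $w = g$ on $L_{n_0}$, $w = f_i$ on $K_m$, exactly as in the proof of Theorem~\ref{theorem 3.3}. Fix a small tolerance $\delta > 0$ depending only on $\epsilon$, $1/s$, and $\|w\|_\infty$, and apply Mergelyan's theorem to obtain a polynomial $\widetilde{A}$ with $\sup_{L_{n_0} \cup K_m} |\widetilde{A} - w| < \delta$; let $d_A = \deg \widetilde{A}$, a quantity depending on $\delta$ and $w$ but not on $n$.

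The essential new step is the construction of $u$. In Theorem~\ref{theorem 3.3}, $u$ was a polynomial of degree exactly $p_{k_n}$, and Proposition~\ref{proposition 2.4}(iii) placed $u$ in every $D_{p_{k_n}, q^{(k_n)}_j}(\zeta)$ simultaneously. That strategy fails here because the $p$-index varies with $j$ and $q_n \geq 1$ is permitted. Instead I invoke Proposition~\ref{proposition 2.4}(ii): it suffices to produce $u = A/B$ in lowest terms with $\deg A \leq \min_j p^{(n)}_j$ and $\deg B = q_n$ exactly, for then $u \in D_{p, q_n}(\zeta)$ and $[u; p/q_n]_\zeta \equiv u$ for every $p \geq \deg A$, in particular for every $p = p^{(n)}_j$. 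Choose $n$ so large that $\min_j p^{(n)}_j \geq d_A$, which is possible since $\min_j p^{(n)}_j \to +\infty$. If $q_n = 0$, take $u = \widetilde{A}$. Otherwise, set $R = \max_{z \in L \cup L_{n_0} \cup K_m} |z|$ and $B(z) = 1 + \beta z^{q_n}$ for some $\beta \neq 0$ chosen so that $|\beta| R^{q_n}$ is as small as desired; then $\deg B = q_n$, $|B - 1|$ is uniformly small on $L \cup L_{n_0} \cup K_m$, and all zeros of $B$ lie at $|z| = |\beta|^{-1/q_n} > R$, hence off that set. Perturb $\widetilde{A}$ to $A = \widetilde{A} + \eta$ with $\eta \neq 0$ of small modulus chosen outside the finite set $\{-\widetilde{A}(z_0) : B(z_0) = 0\}$, so that $A, B$ are coprime while the Mergelyan estimate is preserved. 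Setting $u = A/B$, the elementary estimate $|u - w| \leq (|A - w| + \|w\|_\infty |1 - B|)/|B|$ shows that $u$ approximates $w$ uniformly on $L_{n_0} \cup K_m$ within $\min(\epsilon, 1/s)$, which together with Proposition~\ref{proposition 2.4} places $u$ in $A'(i,s,m,n) \cap B'(s,k,n)$ and yields $\sup_{L'} |u - g| < \epsilon$.

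The main obstacle is the requirement that $\deg B = q_n$ hold \emph{exactly} so that Proposition~\ref{proposition 2.4}(ii) can be applied uniformly in $j$, while at the same time $u = A/B$ must remain uniformly close on $L_{n_0} \cup K_m$ to the Mergelyan polynomial approximation of $w$, whose degree is controlled independently of the potentially unbounded sequence $(q_n)$. The choice $B(z) = 1 + \beta z^{q_n}$ with $|\beta| R^{q_n}$ very small resolves this tension: $B$ is almost equal to $1$ on the relevant set while its degree is exactly $q_n$, regardless of how large $q_n$ may be.
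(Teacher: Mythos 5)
Your proof is correct and follows essentially the same route as the paper: the same decomposition into open sets $A'(i,s,m,n,j)$, $B'(s,k,n,j)$ and a Baire argument, with density achieved by approximating the Mergelyan polynomial by a rational function of the form $(\text{polynomial of controlled degree})/(1+\beta z^{q_n})$ and invoking Proposition~\ref{proposition 2.4}(ii). The only cosmetic difference is that you explicitly perturb the numerator by a small constant $\eta$ to guarantee coprimeness, whereas the paper leaves this implicit in the phrase ``$|d|$ small enough'' (which already forces the zeros of $1+dz^{q_n}$ to have modulus larger than any root of the fixed numerator).
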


\begin{proof} Let $\{ f_i \}_{i \geq 1}$ be an enumeration of polynomials with coefficients in $\mathbb{Q} + i\mathbb{Q}$.

Now, for every $i, s, n, k, m \in \mathbb{N}$ and for every $j \in \{1, \cdots N(n) \}$ we consider the following sets:
$$ A(i, s, m, n, j) = \{ f \in H(\Omega) : f \in D_{p_{j}^{(n)}, q_{n}}(\zeta) \; \text{for every} \; \zeta \in L $$
$$ \text{and} \; \sup_{\zeta \in L} \sup_{z \in K_m} |[f; p_{j}^{(n)} / q_{n}]_{\zeta}(z) - f_i(z)| < \frac{1}{s}\}. $$ 

$$ A(i, s, m, n) = \{ f \in H(\Omega) : f \in D_{p_{j}^{(n)}, q_{n}}(\zeta) \; \text{for every} \; \zeta \in L $$
$$ \text{and for every} \; j = 1, 2, \cdots, N(n) \; \text{and also} $$
$$ \max_{j = 1, \cdots, N(n)} \sup_{\zeta \in L} \sup_{z \in K_m} |[f; p_{j}^{(n)} / q_{n}]_{\zeta}(z) - f_i(z)| < \frac{1}{s}\} \equiv \bigcap_{j = 1}^{N(n)} A(i, s, m, n, j). $$ 

$$ B(s, k, n, j) = \{ f \in H(\Omega) : f \in D_{p_{n}, q_{j}^{(n)}}(\zeta) \; \text{for every} \; \zeta \in L $$
$$ \text{and} \; \sup_{\zeta \in L} \sup_{z \in L_k} |[f; p_{j}^{(n)} / q_{n}]_{\zeta}(z) - f(z)| < \frac{1}{s}\}. $$ 

$$ B(s, k, n) = \{ f \in H(\Omega) : f \in D_{p_{j}^{(n)}, q_{n}}(\zeta) \; \text{for every} \; \zeta \in L $$
$$ \text{and for every} \; j = 1, 2, \cdots, N(n) \; \text{and also} $$
$$ \max_{j = 1, \cdots, N(n)} \sup_{\zeta \in L} \sup_{z \in L_k} |[f; p_{j}^{(n)} / q_{n}]_{\zeta}(z) - f(z)| < \frac{1}{s}\} \equiv \bigcap_{j = 1}^{N(n)} B(s, k, n, j). $$ 

One can verify (mainly by using Mergelyan's theorem) that if $\mathcal{U}$ is the set of all functions satisfying $(1) - (3)$, then it holds:
$$ \mathcal{U} = \bigcap_{i, s, k, m \in \mathbb{N}} \bigcup_{n \in \mathbb{N}} A(i, s, m, n) \cap B(s, k, n) = $$
$$ = \bigcap_{i, s, k, m \in \mathbb{N}} \bigcup_{n \in \mathbb{N}} \Big[ \bigcap_{j = 1}^{N(n)} A(i, s, m, n, j) \cap \bigcap_{j = 1}^{N(n)} B(s, k, n, j) \Big]. $$

Since $H(\Omega)$ is a complete metric space, according to Baire's theorem, it suffices to prove the following:

\begin{claim} \label{claim 3.9} For every $i, s, n, k, m \in \mathbb{N}$ and for every $j \in \{1, \cdots N(n) \}$ the sets $A(i, s, m, n, j)$ and $B(s, k, n, j)$ are open in $H(\Omega)$.

\end{claim}

\begin{claim} \label{claim 3.10} For every $i, s, k, m \in \mathbb{N}$ the set:
$$ \mathcal{U}(i, s, k, m) = \bigcup_{n \in \mathbb{N}} A(i, s, m, n) \cap B(s, k, n) = $$
$$ = \bigcup_{n \in \mathbb{N}} \bigcap_{j = 1}^{N(n)} A(i, s, m, n, j) \cap \bigcap_{j = 1}^{N(n)} B(s, k, n, j) $$ 

is dense in $H(\Omega)$.

\end{claim}

\subsection{\boldmath{$A(i, s, m, n, j)$ and $B(s, k, n, j)$ are open in $H(\Omega)$}}

The sets $A(i, s, m, n, j)$ have been proven to be open in \cite{MELAS.NESTORIDIS} for $q_j  = 0$ and in \cite{MAKRIDIS1} and \cite{MAKRIDIS2} for $q_j \geq 1$. The sets $B(s, k, n, j)$ have been proven to be open in \cite{FOURNODAULOS.NESTORIDIS} for $q_j \geq 1$ and in \cite{MELAS.NESTORIDIS} for $q_j = 0$. For complete proofs of the above facts we also refer to \cite{MAKRIDIS2}.

\subsection{\boldmath{Density of $\mathcal{U}(i, s, k, m)$}}

In order to prove Claim \ref{claim 3.10} we fix $i, s, k, m \in \mathbb{N}$ and we want to prove that the set:
$$ \mathcal{U}(i, s, k, m) = \bigcup_{n \in \mathbb{N}} A(i, s, m, n) \cap B(s, k, n) = $$
$$ = \bigcup_{n \in \mathbb{N}} \bigcap_{j = 1}^{N(n)} A(i, s, m, n, j) \cap \bigcap_{j = 1}^{N(n)} B(s, k, n, j) $$ 

is a dense subset of $H(\Omega)$.

Let $g \in H(\Omega)$, $L' \subseteq \Omega$ be a compact set and $\varepsilon > 0$. Our aim is to find a function $u \in \mathcal{U}(i, s, k, m)$ such that $\sup_{z \in L'} |u(z) - g(z)| < \varepsilon$. There is no problem if we assume that $\varepsilon < \frac{1}{s}$. According to Lemma \ref{lemma 3.2}, we are able to find an index $n_0 \in \mathbb{N}$ satisfying $L \cup L' \cup L_k \subseteq L_{n_0}$.

Since $L_{n_0}$ and $K_m$ are disjoint compact sets with connected complements, the set $L_{n_0} \cup K_m$ is also a compact set with connected complement.

Consider now the following function:
$$ w(z) = \begin{cases} f_i(z) &\mbox{if} \; z \in K_m \\ 
g(z) &\mbox{if} \; z \in L_{n_0} \end{cases} $$

The function $w$ is well defined (because $L_{n_0} \cap K_m = \emptyset$) and also $w \in A(L_{n_0} \cup K_m)$. We apply Mergelyan's theorem and we find a polynomial $p$ such that $\sup_{L_{n_0} \cup K_m} |w(z) - p(z)| < \frac{\varepsilon}{2}$. Since:
$$ \min_{j \in \{1, \cdots, N(n) \}} \{ p_{1}^{(n)}, p_{2}^{(n)}, \cdots, p_{N(n)}^{(n)} \} \to + \infty $$ 

as $n \to + \infty$, there exists an index $k_{n_1} \in \mathbb{N}$ such that:
$$ \min_{j \in \{1, \cdots, N(k_{n_1}) \}} \{ p_{1}^{(k_{n_1})}, p_{2}^{(k_{n_1})}, \cdots, p_{N(k_{n_1})}^{(k_{n_1})} \} > deg(p(z)). $$ 

Consider the function:
$$ u(z) = \frac{p(z)}{1 + dz^{q_{k_{n_1}}}} $$

where $d \in \mathbb{C} \setminus \{ 0 \}$ and $0 < |d|$ is small enough. We notice that it holds:
$$ \sup_{L_{n_0} \cup K_m} |p(z) - u(z)| = \sup_{L_{n_0} \cup K_m} |p(z) - \frac{p(z)}{1 + dz^{q_{k_{n_1}}}}| = \sup_{L_{n_0} \cup K_m} |\frac{dz^{q_{k_{n_1}}}p(z)}{1 + dz^{q_{k_{n_1}}}}| < \frac{\varepsilon}{2} $$

provided that $0 < |d|$ is small enough. It follows that $\sup_{L_{n_0} \cup K_m} |w(z) - u(z)| < \varepsilon$. In order to show that $u \in \mathcal{U}(i, s, k, m)$, we verify the following:

\begin{itemize}

\item[(1)]
$u \in D_{p_{j}^{(k_{n_1})}, q_{k_{n_1}}}(\zeta)$ for every $\zeta \in L_{n_0} \cup K$ and for every $j \in \{ 1, \cdots, N(k_{n_1}) \}$, according to Proposition \ref{proposition 2.4}. In particular, this holds for every $\zeta \in L$.

\item[(2)]
$[u; p_{j}^{(k_{n_1})} / q_{k_{n_1}}]_{\zeta}(z) = u(z)$ for every $\zeta \in L_{n_0} \cup K$, for every $z \in K$ and for every $j \in \{ 1, \cdots, N(k_{n_1}) \}$, according to Proposition \ref{proposition 2.4}. In particular, this holds for every $\zeta \in L$.

\end{itemize}

The result follows from Baire's theorem.

\end{proof}

We state now two consequences of Theorem \ref{theorem 3.8} without proofs.

\begin{theorem} \label{theorem 3.11}

Let $\Omega \subseteq \mathbb{C}$ be a simply connected domain and $\zeta \in \Omega$ a fixed point. We consider an arbitrary sequence $(q_n)_{n \geq 1} \subseteq \mathbb{N}$ (may be bounded or unbounded). Now, for every $n \in \mathbb{N}$ let $p_{1}^{(n)}, p_{2}^{(n)}, \cdots, p_{N(n)}^{(n)} \in \mathbb{N}$, where $N(n)$ is another natural number, such that:
$$ \min_{j \in \{1, \cdots, N(n) \}} \{ p_{1}^{(n)}, p_{2}^{(n)}, \cdots, p_{N(n)}^{(n)} \} \to + \infty $$ 

as $n \to + \infty$. Then there exists a function $f \in H(\Omega)$ satisfying the following:

For every compact set $K \subseteq \mathbb{C} \setminus \Omega$ with connected complement and for every function $h \in A(K)$, there exists a subsequence $(q_{k_n})_{n \geq 1}$ of the sequence $(q_n)_{n \geq 1}$ such that:

\begin{itemize}

\item[(1)]
$f \in D_{p_{j}^{(k_n)}, q_{k_n}}(\zeta)$ for every $n \in \mathbb{N}$ and for every $j \in \{ 1, \cdots, N(k_n) \}$.

\item[(2)]
$\max_{j = 1, \cdots, N(k_n)} \sup_{z \in K} |[f; p_{j}^{(k_n)} / q_{k_n}]_{\zeta}(z) - h(z)| \to 0 \; \text{as} \; n \to + \infty$ .

\item[(3)]
For every compact set $J \subseteq \Omega$ it holds:
$$ \max_{j = 1, \cdots, N(k_n)} \sup_{z \in J} |[f; p_{j}^{(k_n)} / q_{k_n}]_{\zeta}(z) - f(z)| \to 0 $$

$\text{as} \; n \to + \infty$.

\end{itemize}

Moreover, the set of all functions $f$ satisfing $(1) - (3)$ is dense and $G_{\delta}$ in $H(\Omega)$.

\end{theorem}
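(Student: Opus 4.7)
The plan is to deduce Theorem \ref{theorem 3.11} directly from Theorem \ref{theorem 3.8} by specializing the compact set $L$ to the singleton $\{\zeta\}$. Since $\{\zeta\} \subseteq \Omega$ is trivially compact, Theorem \ref{theorem 3.8} applies without modification to the same data $(q_n)_{n \geq 1}$ and $\{p_j^{(n)}\}$, and it produces a dense $G_\delta$ subset $\mathcal{C}_\zeta$ of $H(\Omega)$ of functions $f$ satisfying its three conclusions.

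Next I would observe that, when $L = \{\zeta\}$, the outer supremum $\sup_{\zeta' \in L}$ in each of the conclusions (1)--(3) of Theorem \ref{theorem 3.8} collapses to evaluation at the single point $\zeta$. Conclusion (1) then reads $f \in D_{p_j^{(k_n)}, q_{k_n}}(\zeta)$ for every $n$ and every $j \in \{1, \ldots, N(k_n)\}$, which is exactly (1) of Theorem \ref{theorem 3.11}. Similarly conclusion (2) becomes $\max_{j} \sup_{z \in K} |[f; p_j^{(k_n)}/q_{k_n}]_\zeta(z) - h(z)| \to 0$, and conclusion (3) becomes the corresponding statement on compact subsets $J \subseteq \Omega$. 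In each case these are precisely the conditions (2) and (3) required in Theorem \ref{theorem 3.11}.

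Therefore the class $\mathcal{C}_\zeta \subseteq H(\Omega)$ obtained from Theorem \ref{theorem 3.8} equals the class of functions satisfying the hypotheses of Theorem \ref{theorem 3.11}. Since Theorem \ref{theorem 3.8} already asserts that this class is dense and $G_\delta$ in $H(\Omega)$, the conclusion of Theorem \ref{theorem 3.11} follows at once. There is essentially no obstacle; the work has been done in Theorem \ref{theorem 3.8}, and the only point to check is the routine bookkeeping that the statement with a fixed single center is the specialization $L = \{\zeta\}$ of the statement with a compact set of centers.
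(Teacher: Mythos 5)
Your proposal is correct and is exactly the intended argument: the paper states Theorem \ref{theorem 3.11} as a consequence of Theorem \ref{theorem 3.8} without proof, and the analogous derivation of Theorem \ref{theorem 3.6} from Theorem \ref{theorem 3.3} is precisely the specialization $L = \{\zeta\}$ that you use here.
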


\begin{theorem} \label{theorem 3.12}

Let $\Omega \subseteq \mathbb{C}$ be a simply connected domain. We consider an arbitrary sequence $(q_n)_{n \geq 1} \subseteq \mathbb{N}$ (may be bounded or unbounded). Now, for every $n \in \mathbb{N}$ let $p_{1}^{(n)}, p_{2}^{(n)}, \cdots, p_{N(n)}^{(n)} \in \mathbb{N}$, where $N(n)$ is another natural number, such that:
$$ \min_{j \in \{1, \cdots, N(n) \}} \{ p_{1}^{(n)}, p_{2}^{(n)}, \cdots, p_{N(n)}^{(n)} \} \to + \infty $$ 

as $n \to + \infty$. Then there exists a function $f \in H(\Omega)$ such that for every compact set $K \subseteq \mathbb{C} \setminus \Omega$ with connected complement and for every function $h \in A(K)$, there exists a subsequence $(q_{k_n})_{n \geq 1}$ of the sequence $(q_n)_{n \geq 1}$ satisfying the following:

\begin{itemize}

\item[(1)]
For every compact set $L \subseteq \Omega$, there exists a $n(L) \in \mathbb{N}$ such that $f \in D_{p_{j}^{(k_n)}, q_{k_n}}(\zeta)$ for every $n \geq n(L)$, for every $j \in \{ 1, \cdots, N(k_n) \}$ and for every $\zeta \in L$.

\item[(2)]
$\max_{j = 1, \cdots, N(k_n)} \sup_{z \in K} |[f; p_{j}^{(k_n)} / q_{k_n}]_{\zeta}(z) - h(z)| \to 0 \; \text{as} \; n \to + \infty$.

\item[(3)]
For every compact set $L \subseteq \Omega$ it holds:
$$ \max_{j = 1, \cdots, N(k_n)} \sup_{z \in L} |[f; p_{j}^{(k_n)} / q_{k_n}]_{\zeta}(z) - f(z)| \to 0 $$

$\text{as} \; n \to + \infty$.

\end{itemize}

Moreover, the set of all functions $f$ satisfing $(1) - (3)$ is dense and $G_{\delta}$ in $H(\Omega)$.

\end{theorem}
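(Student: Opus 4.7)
The proof mirrors the derivation of Theorem \ref{theorem 3.7} from Theorem \ref{theorem 3.3}, with Theorem \ref{theorem 3.8} now playing the role of the basic building block. The plan is to apply Theorem \ref{theorem 3.8} along an exhausting family of compact sets $\{L_k\}_{k \geq 1} \subseteq \Omega$ provided by Lemma \ref{lemma 3.2}, and then to take the intersection of the resulting classes.

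Concretely, for each $k \in \mathbb{N}$ I would apply Theorem \ref{theorem 3.8} with $L := L_k$ to obtain a dense $G_\delta$ subset $\mathcal{C}_k \subseteq H(\Omega)$ whose members satisfy conditions (1)--(3) of Theorem \ref{theorem 3.8} for the fixed compact $L_k$. Set $\mathcal{C} := \bigcap_{k \geq 1} \mathcal{C}_k$. Since $H(\Omega)$ is a complete metric space (with the topology of uniform convergence on compacta), Baire's theorem yields that $\mathcal{C}$ is dense and $G_\delta$ in $H(\Omega)$; this handles the genericity part of the conclusion.

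To verify that every $f \in \mathcal{C}$ meets the three conclusions of Theorem \ref{theorem 3.12}, fix $f \in \mathcal{C}$, a compact set $K \subseteq \mathbb{C} \setminus \Omega$ with connected complement, and a function $h \in A(K)$. For each $k \in \mathbb{N}$, Theorem \ref{theorem 3.8} supplies a subsequence $(q_{m_n^{(k)}})_{n \geq 1}$ of $(q_n)_{n \geq 1}$ realizing the conclusion for $L_k$. A standard diagonal extraction then yields a single subsequence $(q_{k_n})_{n \geq 1}$ with the property that, for every fixed $k_0$, from some index $n_0(k_0)$ onward the terms of $(q_{k_n})$ come from $(q_{m_n^{(k_0)}})$. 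Any compact $L \subseteq \Omega$ is contained in some $L_{k_0}$, so items (2) and (3) of Theorem \ref{theorem 3.12} for $L$ follow from the analogous items for $L_{k_0}$, while item (1) is inherited with $n(L)$ set equal to $n_0(k_0)$.

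The only technical step is the diagonal extraction itself: at stage $n$ one must bookkeep finitely many uniform errors, one for each $k_0 \leq n$ (measured over $K$ against $h$ and over $L_{k_0}$ against $f$, with the max over $j \in \{1, \ldots, N(k_n)\}$), and force each of them below $1/n$ by choosing $k_n$ sufficiently far along in $(q_{m_n^{(k_0)}})$. This is the main (but entirely routine) obstacle; all genuine analytic content — density, openness, and the Mergelyan-plus-perturbation construction — has already been absorbed into Theorem \ref{theorem 3.8}, so no further work beyond the diagonalization is required.
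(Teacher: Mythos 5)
Your strategy is precisely the one the paper intends: Theorems 3.11 and 3.12 are stated as unproved corollaries of Theorem \ref{theorem 3.8}, to be obtained exactly as Theorems \ref{theorem 3.6} and \ref{theorem 3.7} are obtained from Theorem \ref{theorem 3.3}, and the paper's proof of Theorem \ref{theorem 3.7} is the same ``apply the base theorem with $L=L_k$, set $\mathcal{C}=\bigcap_k\mathcal{C}_k$, invoke Baire'' device you describe. Two small points are worth tightening. First, for the $G_\delta$ claim you need the equality $\mathcal{U}=\bigcap_k\mathcal{C}_k$ (as the paper asserts for Theorem \ref{theorem 3.7}), not merely the inclusion $\bigcap_k\mathcal{C}_k\subseteq\mathcal{U}$: your argument establishes density of $\mathcal{U}$, but $G_\delta$-ness requires the reverse inclusion, which holds because if $f$ satisfies Theorem \ref{theorem 3.12} then for each $K,h$ the given subsequence, truncated to indices $\geq n(L_k)$, already fulfils the requirements of Theorem \ref{theorem 3.8} for $L=L_k$. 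Second, the diagonalization is cleaner if you exploit the monotonicity that comes for free from the exhaustion $L_k\subseteq L_{k+1}$: for fixed $K,h$ and any $\varepsilon>0$, the set of indices $n$ at which the Theorem \ref{theorem 3.8}-type conditions hold on $L_{k+1}$ with error $<\varepsilon$ is contained in the corresponding set for $L_k$, and each such set is infinite; picking $k_1<k_2<\cdots$ with $k_n$ in the set for $L_n$ and error $<1/n$ gives the single subsequence directly, without needing the chosen subsequences for different $k$ to be literally nested. With these clarifications the proposal is correct and coincides with the paper's intended route.
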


We notice that the previous results have variants in several spaces with similar proofs. See for instance \cite{MAKRIDIS2}. We only mention the variants of Seleznev type and in a closed subspace of $A^{\infty}(\Omega)$. The proofs are similar to the ones of Theorem \ref{theorem 3.3} - Theorem \ref{theorem 3.7}.

Consider the space $\mathbb{C}^{\mathbb{N}}$ endowed with the Cartesian topology. A well - known result is that $\mathbb{C}^{\mathbb{N}}$ is a metrizable topological space; the same topology on $\mathbb{C}^{\mathbb{N}}$ can be induced by the following metric:
$$ \text{For every} \; a, b \in \mathbb{C}^{\mathbb{N}} \; \text{with} \; a \equiv (a_n)_{n \geq 0} \; \text{and} \; b \equiv (b_n)_{n \geq 0}  \; \text{we define}: $$
$$ \rho_c(a, b) = \sum_{n = 0}^{+ \infty} \frac{1}{2^n} \frac{|a_n - b_n|}{1 + |a_n - b_n|}. $$

We know that $(\mathbb{C}^{\mathbb{N}}, \rho_c)$ is a complete metric space.

Another metric that can be introduced on $\mathbb{C}^{\mathbb{N}}$ giving a different topology from the Cartesian one is the following:
$$ \text{For every} \; a, b \in \mathbb{C}^{\mathbb{N}} \; \text{with} \; a \equiv (a_n)_{n \geq 0} \; \text{and} \; b \equiv (b_n)_{n \geq 0}  \; \text{we define}: $$
$$ \rho_d(a, b) = 
\begin{cases} 
2^{- n_0} &\mbox{if} \; a \neq b \; (\text{where} \; n_0 = min \{ n \in \mathbb{N} : a_n \neq b_n \}) \\ 
0 & \mbox{if} \; a = b 
\end{cases} $$

It is also true that $(\mathbb{C}^{\mathbb{N}}, \rho_d)$ is a complete metric space. Moreover, one can see that $\rho_c \leq 2 \rho_d$.

\begin{theorem} \label{theorem 3.13} 

Let $(p_n)_{n \geq 0} \subseteq \mathbb{N}$ be a sequence such that $p_n \to + \infty$. Now, for every $n \in \mathbb{N}$ let $q_{0}^{(n)}, q_{1}^{(n)}, \cdots, q_{N(n)}^{(n)} \in \mathbb{N}$, where $N(n)$ is another natural number. Then there exists an element $a \equiv (a_n)_{n \geq 0} \in \mathbb{C}^{\mathbb{N}}$ such that the formal power series $f(z) = \sum_{n = 0}^{+ \infty} a_nz^n$ satisfies the following:

For every compact set $K \subseteq \mathbb{C} \setminus \{ 0 \}$ with connected complement and for every function $\psi \in A(K)$ there exists a subsequence $(p_{k_n})_{n \geq 0}$ of the sequence $(p_n)_{n \geq 0}$ such that:

\begin{itemize}

\item[(1)]
$f \in D_{p_{k_n}, q_{j}^{(k_n)}}(0)$ for every $j \in \{ 0, \cdots, N(k_n) \}$ and for every $n \in \mathbb{N}$.

\item[(2)]
$\max_{j = 0, \cdots, N(k_n)} \sup_{z \in K} |[f; p_{k_n} / q_{j}^{(k_n)}]_{0}(z) - \psi(z)| \to 0 \; \text{as} \; n \to + \infty$.

\end{itemize}

The set of all $a \equiv (a_n)_{n \geq 0} \in \mathbb{C}^{\mathbb{N}}$ satisfying $(1) - (2)$ is dense and $G_{\delta}$ in both spaces $(\mathbb{C}^{\mathbb{N}}, \rho_d)$ and $(\mathbb{C}^{\mathbb{N}}, \rho_c)$.

\end{theorem}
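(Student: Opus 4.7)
The proof follows the Baire category framework of Theorem~\ref{theorem 3.3}, specialized to the Seleznev setting where no holomorphic structure on a domain is available and condition (3) disappears. The plan is to fix an enumeration $\{f_i\}_{i\geq 1}$ of polynomials with coefficients in $\mathbb{Q}+i\mathbb{Q}$ and an exhausting family $\{K_m\}_{m\geq 1}$ of compact subsets of $\mathbb{C}\setminus\{0\}$ with connected complements, obtained from Lemma~\ref{lemma 3.1} applied to the domain $\mathbb{C}\setminus\{0\}$. For $i,s,m,n\in\mathbb{N}$ and $j\in\{0,\ldots,N(n)\}$ one defines
\[
A(i,s,m,n,j)=\Big\{a\in\mathbb{C}^{\mathbb{N}}:f\in D_{p_n,q_j^{(n)}}(0)\ \text{and}\ \sup_{z\in K_m}\bigl|[f;p_n/q_j^{(n)}]_0(z)-f_i(z)\bigr|<1/s\Big\},
\]
where $f(z)=\sum a_n z^n$, and $A(i,s,m,n)=\bigcap_{j=0}^{N(n)}A(i,s,m,n,j)$. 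A standard argument using Mergelyan's theorem and the density of $\{f_i\}$ in $A(K_m)$ shows that the set $\mathcal{U}$ of sequences satisfying (1)--(2) equals $\bigcap_{i,s,m}\bigcup_{n}A(i,s,m,n)$.

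For openness, each $A(i,s,m,n,j)$ is open in the coarser Cartesian metric $\rho_c$: by the Jacobi formulas of Remark~\ref{remark 2.3}, the Hankel determinant $D_{p_n,q_j^{(n)}}(f,0)$ and the coefficients of the polynomials $A(f,0)$ and $B(f,0)$ are polynomial expressions in $a_0,\ldots,a_{p_n+q_j^{(n)}}$, so the two conditions defining $A(i,s,m,n,j)$ depend continuously on these finitely many coordinates. Because $\rho_c\leq 2\rho_d$, the $\rho_d$-topology is finer than the $\rho_c$-topology, so $\rho_c$-openness immediately yields $\rho_d$-openness.

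For density it suffices to prove $\rho_d$-density of $\bigcup_n A(i,s,m,n)$, since the inequality $\rho_c\leq 2\rho_d$ then also gives $\rho_c$-density. Given $b\in\mathbb{C}^{\mathbb{N}}$ and $N\in\mathbb{N}$, I will construct a sequence $a$ with $a_k=b_k$ for $k\leq N$ lying in some $A(i,s,m,k_n)$. Let $P_b(z)=\sum_{k=0}^{N}b_kz^k$, and consider $h(z)=(f_i(z)-P_b(z))/z^{N+1}$, which is holomorphic on a neighborhood of $K_m$ because $0\notin K_m$. Mergelyan's theorem on the compact set $K_m$ (with connected complement) produces a polynomial $R$ with $\sup_{z\in K_m}|R(z)-h(z)|$ arbitrarily small, so that the polynomial $u(z)=P_b(z)+z^{N+1}R(z)$ has initial coefficients $b_0,\ldots,b_N$ and satisfies $\sup_{K_m}|u-f_i|<1/s$.

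Choose $k_n$ with $p_{k_n}\geq\deg u$, which is possible since $p_n\to+\infty$. Applying Proposition~\ref{proposition 2.4} to the polynomial $u$ (viewed as $u/1$ with denominator of degree zero) gives $u\in D_{p_{k_n},q_j^{(k_n)}}(0)$ and $[u;p_{k_n}/q_j^{(k_n)}]_0\equiv u$ for every $j\in\{0,\ldots,N(k_n)\}$. Consequently the coefficient sequence of $u$, extended by zeros, lies in $A(i,s,m,k_n)$ and is within $\rho_d$-distance $2^{-(N+1)}$ of $b$. Since $(\mathbb{C}^{\mathbb{N}},\rho_c)$ and $(\mathbb{C}^{\mathbb{N}},\rho_d)$ are both complete metric spaces, Baire's theorem gives the conclusion in both topologies. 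The main obstacle is $\rho_d$-density, which forces the preservation of the initial coefficients $b_0,\ldots,b_N$; this is resolved by the $z^{N+1}$ factorization, which uses crucially that $0\notin K_m$ so that $h$ is holomorphic on a neighborhood of $K_m$.
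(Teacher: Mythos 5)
Your overall framework—Baire category with sets $A(i,s,m,n,j)$, openness from the polynomial dependence of the Hankel determinant and Jacobi formulas on finitely many coordinates, and $\rho_d$-density via the $z^{N+1}$ factorization—follows the paper's intended route (the paper delegates the details to \cite{MAKRIDIS2}, but the structure is the same as its Theorem~\ref{theorem 3.3}). Your $\rho_d$-density mechanism (prescribing $a_0,\ldots,a_N$, writing $h=(f_i-P_b)/z^{N+1}$, and using $0\notin K_m$) is the right idea and is cleanly explained.

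However, there is a genuine gap in the density step. You construct the polynomial $u=P_b+z^{N+1}R$ and then ``choose $k_n$ with $p_{k_n}\geq\deg u$,'' invoking Proposition~\ref{proposition 2.4} to conclude $u\in D_{p_{k_n},q_j^{(k_n)}}(0)$. But Proposition~\ref{proposition 2.4} says precisely that for a polynomial $u$ of degree $p_0$ (so with $q_0=0$), one has $u\in D_{p_0,q}(0)$ for all $q\geq 0$ and $u\in D_{p,0}(0)$ for all $p\geq p_0$, yet $u\notin D_{p,q}(0)$ whenever $p>p_0$ \emph{and} $q>0$. So if any $q_j^{(k_n)}\geq 1$ and $p_{k_n}>\deg u$—which is exactly what your choice allows—the conclusion $u\in D_{p_{k_n},q_j^{(k_n)}}(0)$ fails. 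You need $\deg u = p_{k_n}$ exactly. The fix is the one used in the proof of Theorem~\ref{theorem 3.3}: after producing $u$, pick $k_n$ with $p_{k_n}>\max\{\deg u, N\}$ and replace $u$ by $u+dz^{p_{k_n}}$ for a nonzero $d$ with $|d|$ small enough that the $\sup_{K_m}$ estimate survives; since $p_{k_n}>N$, the first $N+1$ coefficients are unaffected, the new polynomial has degree exactly $p_{k_n}$, and Proposition~\ref{proposition 2.4} then applies for every $q_j^{(k_n)}$.
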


\begin{proof} Let $\{ f_i \}_{i \geq 1}$ be an enumeration of polynomials with coefficients in $\mathbb{Q} + i\mathbb{Q}$ and $\{ K_m \}_{m \geq 1}$ be a (fixed) absorbing family of $\mathbb{C} \setminus \{ 0 \}$. Now, for every $i, m, n, s \in \mathbb{N}$ and for every $j \in \{ 0, \cdots, N(n) \}$ we consider the following sets:
$$ X(i, j, m, n, s) = \{ a \equiv (a_0, a_1, \cdots) \in \mathbb{C}^\mathbb{N} : \; \text{the formal power series} $$
$$ f(z) = \sum_{n = 0}^{+ \infty} a_nz^n \; \text{satisfies} \; f \in D_{p_n, q_j^{(p_n)}}(0) $$
$$ \text{and} \; \sup_{z \in K_m} |[f; p_n / q_j^{(p_n)}]_{0}(z) - f_i(z)| < \frac{1}{s} \}. $$

$$ X(i, m, n, s) = \bigcap_{j = 0}^{N(n)} X(i, j, m, n, s) = $$
$$ = \{ a \equiv (a_0, a_1, \cdots) \in \mathbb{C}^\mathbb{N} : \; \text{the formal power series} f(z) = \sum_{n = 0}^{+ \infty} a_nz^n $$
$$ \text{satisfies} \; f \in D_{p_n, q_j^{(p_n)}}(0) \; \text{for every} \; j \in \{ 0, \cdots, N(n) \} \; \text{and} $$
$$ \sup_{z \in K_m} |[f; p_n / q_j^{(p_n)}]_{0}(z) - f_i(z)| < \frac{1}{s} \} \; \text{for every} \; j \in \{ 0, \cdots, N(n) \}. $$

One can verify that if $\mathcal{X}$ is the set of all functions satisfying $(1) - (2)$, then it holds:
$$ \mathcal{X} = \bigcap_{i, m, s} \bigcup_{n \in \mathbb{N}} X(i, m, n, s). $$

The sets $X(i, j, m, n, s)$ have been proven to be open in both spaces $(\mathbb{C}^{\mathbb{N}}, \rho_d)$ and $(\mathbb{C}^{\mathbb{N}}, \rho_c)$ in \cite{MAKRIDIS2}, thus the set $X(i, m, n, s)$ is also open as a finite intersection of the previous sets. On the other hand, the set $\mathcal{X}(i, m, s) = \bigcup_{n \in \mathbb{N}} X(i, m, n, s)$ is dense in $(\mathbb{C}^{\mathbb{N}}, \rho_d)$ (and so does in $(\mathbb{C}^{\mathbb{N}}, \rho_c)$ since $\rho_c \leq 2 \rho_d$); the proof is almost the same as the one of the corresponding part of Theorem $3.1$ in \cite{MAKRIDIS2} and is omitted. The result follows from Baire's theorem.

\end{proof}

Next we state some other results whose proofs are similar to the previous ones and are omitted.

\begin{theorem} \label{theorem 3.14} 

Let $(q_n)_{n \geq 0} \subseteq \mathbb{N}$ be an abitrary sequence (may be bounded or unbounded). Now, for every $n \in \mathbb{N}$ let $p_{0}^{(n)}, p_{1}^{(n)}, \cdots, p_{N(n)}^{(n)} \in \mathbb{N}$, where $N(n)$ is another natural number. Suppose that:
$$ \min_{j \in \{ 0, \cdots, N(n) \} } \{ p_{0}^{(n)}, \cdots, p_{N(n)}^{(n)} \} \to + \infty. $$

Then there exists an element $a \equiv (a_n)_{n \geq 0} \in \mathbb{C}^{\mathbb{N}}$ such that the formal power series $f(z) = \sum_{n = 0}^{+ \infty} a_nz^n$ satisfies the following:

For every compact set $K \subseteq \mathbb{C} \setminus \{ 0 \}$ with connected complement and for every function $\psi \in A(K)$ there exists a subsequence $(q_{k_n})_{n \geq 0}$ of $(q_n)_{n \geq 0}$ such that:

\begin{itemize}

\item[(1)]
$f \in D_{p_{j}^{(k_n)}, q_{k_n}}(0)$ for every $j \in \{ 0, \cdots, N(k_n) \}$ and for every $n \in \mathbb{N}$.

\item[(2)]
$\max_{j = 0, \cdots, N(k_n)} \sup_{z \in K} |[f; p_{j}^{(k_n)} / q_{k_n}]_{0}(z) - \psi(z)| \to 0 \; \text{as} \; n \to + \infty$. 

\end{itemize}

The set of all $a \equiv (a_n)_{n \geq 0} \in \mathbb{C}^{\mathbb{N}}$ satisfying $(1) - (2)$ is dense and $G_{\delta}$ in both spaces $(\mathbb{C}^{\mathbb{N}}, \rho_d)$ and $(\mathbb{C}^{\mathbb{N}}, \rho_c)$.

\end{theorem}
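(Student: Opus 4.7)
The plan is to mirror the proof of Theorem \ref{theorem 3.13}, with the roles of $p$ and $q$ interchanged and with the rational-function perturbation from the proof of Theorem \ref{theorem 3.8} used in place of the polynomial perturbation. First, I would fix an enumeration $\{f_i\}_{i\ge 1}$ of polynomials with coefficients in $\mathbb{Q}+i\mathbb{Q}$ and an absorbing family $\{K_m\}_{m\ge 1}$ of compact subsets of $\mathbb{C}\setminus\{0\}$ with connected complements, then define, for each $i,m,n,s\in\mathbb{N}$ and each $j\in\{0,\ldots,N(n)\}$,
$$ X(i,j,m,n,s)=\Bigl\{a\in\mathbb{C}^{\mathbb{N}}:\ f=\sum a_k z^k\in D_{p_j^{(n)},q_n}(0),\ \sup_{z\in K_m}\bigl|[f;p_j^{(n)}/q_n]_0(z)-f_i(z)\bigr|<\tfrac{1}{s}\Bigr\}, $$
and $X(i,m,n,s)=\bigcap_{j=0}^{N(n)}X(i,j,m,n,s)$. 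As in Theorem \ref{theorem 3.13}, the target set equals $\bigcap_{i,m,s}\bigcup_n X(i,m,n,s)$. Openness of each $X(i,j,m,n,s)$ in both $(\mathbb{C}^{\mathbb{N}},\rho_c)$ and $(\mathbb{C}^{\mathbb{N}},\rho_d)$ is the content of the results quoted from \cite{MAKRIDIS2}, so $X(i,m,n,s)$ is open as a finite intersection; and since $\rho_c\le 2\rho_d$ it suffices to prove density in $(\mathbb{C}^{\mathbb{N}},\rho_d)$.

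For the density step I would fix $a\in\mathbb{C}^{\mathbb{N}}$, an integer $\ell\ge 1$, and $i,m,s$, and produce a $b\in\bigcup_n X(i,m,n,s)$ with $b_k=a_k$ for $k<\ell$. Setting $P_0(z)=\sum_{k=0}^{\ell-1}a_k z^k$, I would apply Mergelyan's theorem on $K_m$ to the continuous function $(f_i-P_0)/z^\ell$ (well defined since $0\notin K_m$), obtaining a polynomial $p_1$, and then invoke $\min_j p_j^{(n)}\to+\infty$ to pick $k_{n_1}$ with $p_j^{(k_{n_1})}>\max(\ell-1+q_{k_{n_1}},\ \ell+\deg p_1)$ for every $j\in\{0,\ldots,N(k_{n_1})\}$. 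For a small and generic $d\in\mathbb{C}\setminus\{0\}$, consider
$$ u(z)=P_0(z)+\frac{z^\ell p_1(z)}{1+d z^{q_{k_{n_1}}}}. $$
Put over the common denominator $1+d z^{q_{k_{n_1}}}$, one has $u=A/B$ with $\deg B=q_{k_{n_1}}$, $\deg A\le\max(\ell-1+q_{k_{n_1}},\ \ell+\deg p_1)$ and, for generic $d$, no common root; Proposition \ref{proposition 2.4} then gives $u\in D_{p_j^{(k_{n_1})},q_{k_{n_1}}}(0)$ with $[u;p_j^{(k_{n_1})}/q_{k_{n_1}}]_0\equiv u$ for every $j$. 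Because $u-P_0=O(z^\ell)$ at $0$, the first $\ell$ Taylor coefficients of $u$ match $a_0,\ldots,a_{\ell-1}$ exactly; and for $|d|$ small enough, $u$ is uniformly within $1/s$ of $f_i$ on $K_m$. Hence the Taylor sequence of $u$ lies in $X(i,m,k_{n_1},s)$ and is $\rho_d$-close to $a$, and Baire's theorem finishes the argument.

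The main obstacle will be the simultaneous coordination of three degree constraints on the single perturbation $u$: exact matching of the initial segment $a_0,\ldots,a_{\ell-1}$ (handled by the $P_0+z^\ell(\cdots)$ split), keeping $\deg B$ equal to $q_{k_{n_1}}$ so that Proposition \ref{proposition 2.4} applies with the prescribed $q$-index (handled by choosing $d\ne 0$ generic to prevent cancellation with the roots of the numerator), and keeping $\deg A\le p_j^{(k_{n_1})}$ simultaneously for every $j\in\{0,\ldots,N(k_{n_1})\}$ (handled by the hypothesis $\min_j p_j^{(n)}\to+\infty$). Once these three constraints are reconciled, the uniform approximation on $K_m$ reduces to a small perturbation of the Mergelyan polynomial $p_1$, and everything else is routine Baire-category bookkeeping as in Theorem \ref{theorem 3.13}.
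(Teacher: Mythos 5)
Your Baire-category framework, the definition of the sets $X(i,j,m,n,s)$, the openness appeal to \cite{MAKRIDIS2}, and the reduction from $\rho_c$ to $\rho_d$ all mirror the paper's scheme for Theorem \ref{theorem 3.13}, so the difficulty is localized to the density step. There, your perturbation $u=P_0+z^\ell p_1/(1+dz^{q_{k_{n_1}}})$, written over the common denominator $B(z)=1+dz^{q_{k_{n_1}}}$, has numerator $A(z)=P_0(z)\bigl(1+dz^{q_{k_{n_1}}}\bigr)+z^\ell p_1(z)$, of degree $\max\bigl(\ell-1+q_{k_{n_1}},\ \ell+\deg p_1\bigr)$, which grows with $q_{k_{n_1}}$. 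To apply Proposition \ref{proposition 2.4} you therefore require $p_j^{(k_{n_1})}>\ell-1+q_{k_{n_1}}$ for every $j$; but the theorem's hypotheses give only $\min_j p_j^{(n)}\to+\infty$ while $(q_n)$ is completely arbitrary, so nothing forces $\min_j p_j^{(n)}-q_n$ to ever exceed $\ell-1$ (take, say, $q_n=2n$ and $p_j^{(n)}=n$). The index $k_{n_1}$ you demand may simply not exist, and the construction breaks exactly where you needed Proposition \ref{proposition 2.4}.

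The remedy is to fold $P_0$ into the numerator so that $\deg A$ becomes independent of $q$. Choose $k_{n_1}$ only so that $\min_j p_j^{(k_{n_1})}>\max(\ell-1,\ \ell+\deg p_1)$ (this uses only $\min_j p_j^{(n)}\to+\infty$), set $q=q_{k_{n_1}}$, let $A_0$ be the polynomial of degree $\le\ell-1$ whose coefficients in degrees $0,\dots,\ell-1$ agree with those of $P_0(z)(1+dz^q)$, and take $u=(A_0+z^\ell p_1)/(1+dz^q)$. Then $\deg B=q$ for $d\ne0$ and $\deg A\le\max(\ell-1,\ \ell+\deg p_1)<\min_j p_j^{(k_{n_1})}$, now independently of $q$; since $P_0(1+dz^q)-A_0$ is divisible by $z^\ell$, a short computation gives $u=P_0+O(z^\ell)$, so the first $\ell$ Taylor coefficients of $u$ are exactly $a_0,\dots,a_{\ell-1}$ as $\rho_d$-density requires; and $A_0\to P_0$ as $d\to0$, whence $u\to P_0+z^\ell p_1$ uniformly on $K_m$, so the $1/s$-approximation of $f_i$ holds for $|d|$ small, while the roots of $B$ (of modulus $|d|^{-1/q}$) stay away from $K_m$ and from the zeros of $A$. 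Proposition \ref{proposition 2.4} then applies as you intended and the rest of your argument goes through unchanged.
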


Let $\Omega \subseteq \mathbb{C}$ be an open set. We say that a holomorphic function $f$ defined on $\Omega$ belongs to $A^{\infty}(\Omega)$ if for every $l \in \mathbb{N}$ the $l$-th derivative $f^{(l)}$ of $f$ extends continuously on $\overline{\Omega}$. In $A^{\infty}(\Omega)$, we consider the topology defined by the seminorms $\sup_{z \in L_k} |f^{(l)}(z)|$, for every $k \geq 1$ and for every $l \in \mathbb{N}$, where $\{ L_k \}_{k \geq 1}$ is a family of compact subsets of $\overline{\Omega}$ such that for every compact set $L \subseteq \overline{\Omega}$ there exists an index $k \in \mathbb{N}$ satisfying $L \subseteq L_k$. Such a family for example is obtained by setting $L_k = \overline{\Omega} \cap \overline{B}(0, k)$ for every $k \in \mathbb{N}$. With this topology, $A^{\infty}(\Omega)$ becomes a Fr\'{e}chet space.

We call $X^{\infty}(\Omega)$ the closure in $A^{\infty}(\Omega)$ of all the rational functions with poles off $\overline{\Omega}$. If $\mathbb{C} \setminus \overline{\Omega}$ is connectet, then the polynomials are dense in $X^{\infty}(\Omega)$.

We state now without proof two generic results in $X^{\infty}(\Omega)$ analogue to the previous results.

\begin{theorem} \label{theorem 3.15}

Let $(p_n)_{n \geq 1} \subseteq \mathbb{N}$ be a sequence such that $p_n \to + \infty$. Now, for every $n \in \mathbb{N}$ let $q_{1}^{(n)}, q_{2}^{(n)}, \cdots, q_{N(n)}^{(n)} \in \mathbb{N}$, where $N(n)$ is another natural number.  Also, let $\Omega \subseteq \mathbb{C}$ be a domain such that $\mathbb{C} \setminus \overline{\Omega}$ is connected. Then there exists a function $f \in X^{\infty}(\Omega)$ satisfying the following:

For every compact set $K \subseteq \mathbb{C} \setminus \overline{\Omega}$ with connected complement and for every function $h \in A(K)$, there exists a subsequence $(p_{k_n})_{n \geq 1}$ of  the sequence $(p_n)_{n \geq 1}$ such that:

\begin{itemize}

\item[(1)]
For every compact set $L \subseteq \overline{\Omega}$, there exists a $n \equiv n(L) \in \mathbb{N}$ such that $f \in D_{p_{k_n}, q_{j}^{(k_n)}}(\zeta)$ for every $\zeta \in L$, for every $n \geq n(L)$ and for every $j \in \{ 1, \cdots, N(k_n) \}$.

\item[(2)]
For every $l \in \mathbb{N}$ it holds:
$$ \max_{j = 1, \cdots, N(k_n)} \sup_{\zeta \in L} \sup_{z \in L} |[f; p_{k_n} / q_{j}^{(k_n)}]^{(l)}_{\zeta}(z) - f^{(l)}(z)| \to 0 \; \text{as} \; n \to + \infty $$

for every compact set $L \subseteq \overline{\Omega}$.

\item[(3)]
$\max_{j = 1, \cdots, N(k_n)} \sup_{\zeta \in L} \sup_{z \in K} |[f; p_{k_n} / q_{j}^{(k_n)}]_{\zeta}(z) - h(z)|\to 0$ as $n \to + \infty$ for every compact set $L \subseteq \overline{\Omega}$.

\end{itemize}

The set of all functions satisfying $(1) - (3)$ is dense and $G_{\delta}$ in $X^{\infty}(\Omega)$. 

\end{theorem}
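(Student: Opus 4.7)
The plan is to follow the Baire category scheme of Theorem~\ref{theorem 3.3}, adapted to the Fr\'{e}chet space $X^{\infty}(\Omega)$ whose seminorms $\sup_{z \in L_k}|f^{(l)}(z)|$ encode uniform control of $f$ together with all of its derivatives on every compact subset of $\overline{\Omega}$. Let $\{f_i\}_{i \geq 1}$ enumerate the polynomials with coefficients in $\mathbb{Q} + i\mathbb{Q}$, let $\{K_m\}_{m \geq 1}$ be the absorbing family of Lemma~\ref{lemma 3.1} applied to $\mathbb{C} \setminus \overline{\Omega}$, and let $\{L_k\}_{k \geq 1}$ be an exhausting family of compacts in $\overline{\Omega}$, for instance $L_k = \overline{\Omega} \cap \overline{B}(0,k)$. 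For each quintuple $(i, s, m, k, n)$ and each $j \in \{1, \ldots, N(n)\}$ I would let $A(i,s,m,k,n,j)$ be the set of $f \in X^{\infty}(\Omega)$ with $f \in D_{p_n, q_j^{(n)}}(\zeta)$ for every $\zeta \in L_k$ and $\sup_{\zeta \in L_k}\sup_{z \in K_m} |[f;p_n/q_j^{(n)}]_{\zeta}(z) - f_i(z)| < 1/s$, and define $B(l,s,k,n,j)$ analogously with the condition $\sup_{\zeta \in L_k}\sup_{z \in L_k}|[f;p_n/q_j^{(n)}]^{(l)}_{\zeta}(z) - f^{(l)}(z)| < 1/s$. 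A Mergelyan-based verification then identifies the class $\mathcal{U}$ of functions satisfying $(1)$--$(3)$ with
\[\mathcal{U} = \bigcap_{i,s,m,k,l \in \mathbb{N}} \bigcup_{n \in \mathbb{N}} \bigcap_{j=1}^{N(n)} \bigl( A(i,s,m,k,n,j) \cap B(l,s,k,n,j) \bigr).\]

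The openness in $X^{\infty}(\Omega)$ of each $A(i,s,m,k,n,j)$ and $B(l,s,k,n,j)$ reduces, through the Jacobi determinant formulas of Remark~\ref{remark 2.3}, to the joint continuity of the first $p_n + q_j^{(n)} + 1$ Taylor coefficients of $f$ at $\zeta$, and hence of the Pad\'{e} approximant together with its $z$-derivatives, as $f$ varies in the $A^{\infty}$-topology and $\zeta$ varies over the compact $L_k$. Since the $X^{\infty}(\Omega)$-topology is finer than the one of $H(\Omega)$, the openness already established in \cite{MELAS.NESTORIDIS}, \cite{FOURNODAULOS.NESTORIDIS}, \cite{MAKRIDIS1} and \cite{MAKRIDIS2} transfers without change. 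By Baire's theorem it therefore suffices to establish density of $\mathcal{U}(i,s,k,m,l) = \bigcup_n \bigcap_j (A \cap B)$ in $X^{\infty}(\Omega)$, and this is the only genuinely new step.

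Fix $g \in X^{\infty}(\Omega)$ and a basic seminorm neighbourhood determined by indices $k', l'$ and tolerance $\varepsilon > 0$. The hypothesis that $\mathbb{C} \setminus \overline{\Omega}$ is connected implies that polynomials are dense in $X^{\infty}(\Omega)$, so I may first replace $g$ by a polynomial $P$ satisfying $\sup_{L_{k'}}|P^{(l')} - g^{(l')}| < \varepsilon/3$. Choose $n_0$ with $L_k \cup L_{k'} \subseteq L_{n_0}$, and enlarge $L_{n_0}$ in $\mathbb{C}$ to a compact set $\widetilde{L}_{n_0}$ containing $L_{n_0}$ in its Euclidean interior, disjoint from $K_m$, and such that $\widetilde{L}_{n_0} \cup K_m$ still has connected complement; the connectedness of $\mathbb{C} \setminus \overline{\Omega}$ permits such a thickening without creating new bounded components of the complement. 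Applying Mergelyan's theorem to the function equal to $f_i - P$ on $K_m$ and to $0$ on $\widetilde{L}_{n_0}$ yields a polynomial $v$ uniformly close to it on the whole union, and Cauchy's integral formula on small circles centred at points of $L_{n_0}$ and lying inside $\widetilde{L}_{n_0}$ promotes this uniform smallness of $v$ on $\widetilde{L}_{n_0}$ to uniform smallness of every $v^{(l)}$ on $L_{n_0}$ up to the required order $\max(l, l')$.

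Finally, pick $k_n$ with $p_{k_n} > \deg(P + v)$ and set $u(z) = P(z) + v(z) + d\,z^{p_{k_n}}$ for some $d \in \mathbb{C} \setminus \{0\}$ of modulus small enough that the perturbation is negligible in every relevant seminorm on $L_{n_0}$ and uniformly on $K_m$. Since $u$ is a polynomial of exact degree $p_{k_n}$, Proposition~\ref{proposition 2.4} gives $u \in D_{p_{k_n}, q_j^{(k_n)}}(\zeta)$ with $[u;p_{k_n}/q_j^{(k_n)}]_{\zeta}(z) \equiv u(z)$ for every $\zeta \in \mathbb{C}$ and every $j$, so every $B$-condition is automatically satisfied with value zero and every $A$-condition reduces to $\sup_{K_m}|u - f_i| < 1/s$, which holds by construction. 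The main obstacle is precisely the $C^{\infty}$-control on $L_{n_0}$ demanded by the $X^{\infty}$-topology; once it is handled via the Mergelyan-plus-Cauchy passage on the thickening $\widetilde{L}_{n_0}$, the remainder of the construction mirrors that of Theorem~\ref{theorem 3.3}, and Baire's theorem delivers the dense $G_{\delta}$ conclusion.
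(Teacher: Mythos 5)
The paper states Theorem~\ref{theorem 3.15} without proof, saying only that the argument is ``similar to the ones of Theorem~\ref{theorem 3.3} -- Theorem~\ref{theorem 3.7}'' and pointing to \cite{MAKRIDIS2} and \cite{NESTORIDIS.ZADIK} for the $X^{\infty}(\Omega)$ machinery. Your Baire-category scaffolding (the decomposition of $\mathcal{U}$ into an extra parameter $l$, the openness transfer via the Jacobi formulas and the continuity of boundary Taylor coefficients in the $A^{\infty}$ topology, and the replacement of Lemma~\ref{lemma 3.2} by the family $L_k = \overline{\Omega}\cap\overline{B}(0,k)$) is the natural adaptation and is the intended route.

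However, the density step contains a genuine gap. You assert that the connectedness of $\mathbb{C}\setminus\overline{\Omega}$ permits a thickening $\widetilde{L}_{n_0}$ of $L_{n_0}$, disjoint from $K_m$, with $\widetilde{L}_{n_0}\cup K_m$ having connected complement. This fails whenever $\mathbb{C}\setminus\overline{\Omega}$ is a \emph{bounded} set: take, for instance, $\Omega = \mathbb{C}\setminus\overline{D}$, so $\mathbb{C}\setminus\overline{\Omega} = D$, which is connected. Then $L_{n_0} = \overline{\Omega}\cap\overline{B}(0,n_0)$ is a closed annulus whose complement already has $D$ as a bounded component, and $K_m\subseteq D$; any compact thickening of $L_{n_0}$ disjoint from $K_m$ still leaves a bounded hole containing $K_m$, so $\widetilde{L}_{n_0}\cup K_m$ has disconnected complement and Mergelyan's theorem does not apply. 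Worse, the strategy itself cannot be repaired within polynomials: if the polynomial $v$ (and hence $u = P + v + d\,z^{p_{k_n}}$) is uniformly small on $L_{n_0}\supseteq\partial D$, then by the maximum modulus principle $u$ is uniformly close to $P$ throughout $D$, so $\sup_{K_m}|u - f_i|$ cannot be made small for arbitrary $f_i$. (For $q_j^{(n)} = 0$ the obstruction persists for \emph{any} $f\in X^{\infty}(\Omega)$, since $[f;p/0]_\zeta = S_p(f,\zeta)$ is always a polynomial; this casts doubt on the statement itself in the case $\mathbb{C}\setminus\overline{\Omega}$ bounded, and suggests the intended hypothesis is that $\mathbb{C}\setminus\overline{\Omega}$ be connected \emph{and unbounded}, or that the approximating $u$ should be rational with poles in $\mathbb{C}\setminus\overline{\Omega}$ as in \cite{NESTORIDIS.ZADIK}.) Your argument is complete in the case where $\mathbb{C}\setminus\overline{\Omega}$ is unbounded: there $\mathbb{C}\setminus L_{n_0} = (\mathbb{C}\setminus\overline{\Omega})\cup(\mathbb{C}\setminus\overline{B}(0,n_0))$ is a union of two connected open sets which meet, hence connected, and a Mayer--Vietoris argument then gives $\tilde{\mathbb{C}}\setminus(L_{n_0}\cup K_m)$ connected, so Mergelyan and the Cauchy-estimate promotion of uniform smallness on $\widetilde{L}_{n_0}$ to derivative smallness on $L_{n_0}$ go through as you describe. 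You should make the boundedness dichotomy explicit and either restrict the hypothesis or address the bounded case separately.
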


\begin{theorem} \label{theorem 3.16}

Let $(q_n)_{n \geq 1} \subseteq \mathbb{N}$ be an arbitrary sequence (may be bounded or unbounded). Now, for every $n \in \mathbb{N}$ let $p_{1}^{(n)}, p_{2}^{(n)}, \cdots, p_{N(n)}^{(n)} \in \mathbb{N}$, where $N(n)$ is another natural number. Suppose that:
$$ \min_{j \in \{ 0, \cdots, N(n) \} } \{ p_{0}^{(n)}, \cdots, p_{N(n)}^{(n)} \} \to + \infty. $$

Also, let $\Omega \subseteq \mathbb{C}$ be a domain such that $\mathbb{C} \setminus \overline{\Omega}$ is connected. Then there exists a function $f \in X^{\infty}(\Omega)$ so that the following holds:

For every compact set $K \subseteq \mathbb{C} \setminus \overline{\Omega}$ with connected complement and for every function $h \in A(K)$, there exists a subsequence $(q_{k_n})_{n \geq 1}$ of  the sequence $(q_n)_{n \geq 1}$ such that:
\begin{itemize}

\item[(1)]
For every compact set $L \subseteq \overline{\Omega}$, there exists a $n \equiv n(L) \in \mathbb{N}$ such that $f \in D_{p_{j}^{(k_n)}, q_{k_n}}(\zeta)$ for every $\zeta \in L$, for every $n \geq n(L)$ and for every $j \in \{ 1, \cdots, N(k_n) \}$.

\item[(2)]
For every $l \in \mathbb{N}$ it holds:
$$ \max_{j = 1, \cdots, N(k_n)} \sup_{\zeta \in L} \sup_{z \in L} |[f; p_{j}^{(k_n)} / q_{k_n}]^{(l)}_{\zeta}(z) - f^{(l)}(z)| \to 0 \; \text{as} \; n \to + \infty. $$

\item[(3)]
$\max_{j = 1, \cdots, N(k_n)} \sup_{\zeta \in L} \sup_{z \in K} |[f; p_{j}^{(k_n)} / q_{k_n}]_{\zeta}(z) - h(z)|\to 0$ as $n \to + \infty$.

\end{itemize}

Moreover, the set of all functions satisfying $(1) - (3)$ is dense and $G_{\delta}$ in $X^{\infty}(\Omega)$. 

\end{theorem}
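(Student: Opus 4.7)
The plan is to prove Theorem~\ref{theorem 3.16} by Baire's theorem in the Fr\'{e}chet space $X^{\infty}(\Omega)$, in exact parallel with the proof of Theorem~\ref{theorem 3.8} but with the open sets strengthened so that their definition controls derivatives and uses an exhaustion $\{L_k\}_{k\ge 1}$ of $\overline{\Omega}$ rather than of $\Omega$. Let $\{f_i\}_{i\ge 1}$ enumerate polynomials with coefficients in $\mathbb{Q}+i\mathbb{Q}$ and let $\{K_m\}_{m\ge 1}$ be the family from Lemma~\ref{lemma 3.1} adapted to $\mathbb{C}\setminus\overline{\Omega}$. For every $i,s,m,n,k,l\in\mathbb{N}$ and every $j\in\{1,\dots,N(n)\}$ I would define
\[
A(i,s,m,n,j)=\Bigl\{f\in X^{\infty}(\Omega):\,f\in D_{p_{j}^{(n)},q_{n}}(\zeta)\ \forall\zeta\in L,\ \sup_{\zeta\in L}\sup_{z\in K_m}\bigl|[f;p_{j}^{(n)}/q_{n}]_{\zeta}(z)-f_i(z)\bigr|<\tfrac{1}{s}\Bigr\},
\]
\[
B(s,k,n,j,l)=\Bigl\{f\in X^{\infty}(\Omega):\,f\in D_{p_{j}^{(n)},q_{n}}(\zeta)\ \forall\zeta\in L_k,\ \sup_{\zeta\in L_k}\sup_{z\in L_k}\bigl|[f;p_{j}^{(n)}/q_{n}]^{(l)}_{\zeta}(z)-f^{(l)}(z)\bigr|<\tfrac{1}{s}\Bigr\},
\]
and take their intersections over $j=1,\dots,N(n)$. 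The desired class $\mathcal{U}$ equals
\[
\bigcap_{i,s,k,l,m}\bigcup_{n}\Bigl[\bigcap_{j=1}^{N(n)}A(i,s,m,n,j)\cap\bigcap_{j=1}^{N(n)}B(s,k,n,j,l)\Bigr],
\]
which is a straightforward bookkeeping check using Lemma~\ref{lemma 3.1} and the exhaustion of $\overline{\Omega}$.

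For openness of $A(i,s,m,n,j)$ and $B(s,k,n,j,l)$ in $X^{\infty}(\Omega)$, I would invoke the continuity results of \cite{MELAS.NESTORIDIS}, \cite{FOURNODAULOS.NESTORIDIS}, \cite{MAKRIDIS1}, \cite{MAKRIDIS2} cited in the proofs of Theorems~\ref{theorem 3.3} and~\ref{theorem 3.8}. The only minor addition is that the $X^{\infty}(\Omega)$ topology is finer than the $H(\Omega)$ one on compact subsets of $\Omega$ and than the sup topology on $L_k\subseteq\overline{\Omega}$, and it controls each derivative $f^{(l)}$ on $L_k$ by definition; combined with the fact that the Jacobi formulas of Remark~\ref{remark 2.3} make $[f;p_j^{(n)}/q_n]_\zeta$ a rational function in $(z,\zeta)$ whose coefficients depend continuously on finitely many Taylor coefficients of $f$ at points of $L$, this delivers continuity of the maps involved, hence openness.

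For the density of each $\mathcal{U}(i,s,k,l,m)=\bigcup_n[\,\cdot\,]$, I would start from an arbitrary $g\in X^{\infty}(\Omega)$, a seminorm neighborhood, $L'=L_{k_0}$, and $\varepsilon<1/s$, then enlarge to $L_{n_0}\supseteq L\cup L'\cup L_k$. Since $\mathbb{C}\setminus\overline{\Omega}$ is connected, polynomials are dense in $X^{\infty}(\Omega)$, so I first replace $g$ by a polynomial $\tilde g$ approximating it in the relevant $A^{\infty}$-seminorms on $L_{n_0}$ up to all the derivatives under consideration. Mergelyan's theorem applied on the compact set $L_{n_0}\cup K_m$ (which has connected complement since $L_{n_0}$ and $K_m$ do and are disjoint) then yields a polynomial $p$ with $\sup_{L_{n_0}\cup K_m}|p-w|<\varepsilon/2$, where $w$ equals $\tilde g$ on $L_{n_0}$ and $f_i$ on $K_m$; Cauchy estimates applied on slight enlargements inside $\overline{\Omega}$ let me upgrade closeness of $p$ to $\tilde g$ on $L_{n_0}$ from sup-norm to any finite $C^l$ norm. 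Using $\min_j p_j^{(n)}\to+\infty$, pick $n=k_{n_1}$ so large that $\min_j p_j^{(k_{n_1})}>\deg p$, and set
\[
u(z)=\frac{p(z)}{1+dz^{q_{k_{n_1}}}}
\]
with $d\neq 0$ of modulus so small that $u$ is holomorphic in a neighborhood of $L_{n_0}\cup K_m$ and approximates $p$ arbitrarily well there in every required $C^l$ norm (again by Cauchy estimates, since $dz^{q_{k_{n_1}}}p(z)/(1+dz^{q_{k_{n_1}}})\to 0$ uniformly as $d\to 0$). By Proposition~\ref{proposition 2.4}, since $\deg p < p_j^{(k_{n_1})}$ and the denominator has degree exactly $q_{k_{n_1}}$ with no common factor, $u\in D_{p_{j}^{(k_{n_1})},q_{k_{n_1}}}(\zeta)$ for every $\zeta$ off the poles and $[u;p_{j}^{(k_{n_1})}/q_{k_{n_1}}]_{\zeta}\equiv u$, so $u$ (automatically in $X^{\infty}(\Omega)$ as a rational function with poles off $\overline{\Omega}$ for small $d$) lies in every $A(i,s,m,k_{n_1},j)$ and $B(s,k,k_{n_1},j,l)$.

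The main obstacle is handling the $C^l$-approximations required in $X^{\infty}(\Omega)$ uniformly in $l$ for each fixed neighborhood: one must ensure both that the initial Mergelyan polynomial $p$ approximates $g$ in each seminorm $\sup_{L_{n_0}}|\cdot^{(l)}|$ (handled by Cauchy's estimate on a slightly larger compact set in $\overline{\Omega}$) and that the rational perturbation $u-p$ is controlled in those same seminorms (handled again by Cauchy estimates combined with $|d|\to 0$). Once this is in place, $u\in\mathcal{U}(i,s,k,l,m)$, density follows, and Baire's theorem in the Fr\'{e}chet space $X^{\infty}(\Omega)$ concludes the proof.
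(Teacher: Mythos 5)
The paper states Theorem~\ref{theorem 3.16} without proof (``We state now without proof two generic results in $X^\infty(\Omega)$''), so there is no author's argument to compare against; I can only assess your proposal on its own terms. Your overall strategy --- Baire in the Fr\'echet space $X^\infty(\Omega)$, with open sets built as in Theorem~\ref{theorem 3.8} but augmented to control each derivative on each $L_k\subseteq\overline{\Omega}$, followed by a density argument via polynomials in $X^\infty(\Omega)$, Mergelyan, and the rational perturbation $u=p/(1+dz^{q_{k_{n_1}}})$ governed by Proposition~\ref{proposition 2.4} --- is the right template and the openness and decomposition steps are sound modulo bookkeeping. There are, however, two genuine gaps.

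First, a bookkeeping inconsistency: your $A(i,s,m,n,j)$ is defined using a set $L$ that is never fixed in this theorem. In Theorem~\ref{theorem 3.16} the requirement holds \emph{for every} compact $L\subseteq\overline{\Omega}$, so $A$ must carry an additional index $k$ with $\zeta$ ranging over $L_k$ (i.e.\ $A(i,s,m,n,j,k)$), matching the intersection over $k$ you already take in the formula for $\mathcal{U}$. As written, $A$ does not depend on $k$ and the formula does not express conditions $(1)$ and $(3)$.

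Second, and more substantively, the step ``Cauchy estimates applied on slight enlargements inside $\overline{\Omega}$ let me upgrade closeness of $p$ to $\tilde g$ on $L_{n_0}$ from sup-norm to any finite $C^l$ norm'' does not work as stated. The compact $L_{n_0}\subseteq\overline{\Omega}$ can (and in general does) meet $\partial\Omega$, so there is no enlargement \emph{inside} $\overline{\Omega}$ containing $L_{n_0}$ in its interior; Cauchy estimates at points of $L_{n_0}\cap\partial\Omega$ require sup-norm control on a full Euclidean neighborhood of those points, which necessarily sticks out of $\overline{\Omega}$. Moreover, Mergelyan as you invoked it yields sup-norm closeness of $p$ to $\tilde g$ only on $L_{n_0}\cup K_m$ itself, which is not sufficient for any Cauchy estimate on $L_{n_0}$. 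What is needed is a compact $\tilde L\subseteq\mathbb{C}$ with $L_{n_0}\subseteq\tilde L^{\circ}$, $\tilde L\cap K_m=\emptyset$, and $\tilde L\cup K_m$ having connected complement, to which Mergelyan is applied instead. Producing such an $\tilde L$ is not automatic: a small Euclidean $\delta$-neighborhood of $L_{n_0}$ may have bounded complementary components, and taking its polynomially convex hull might capture part of $K_m$. One must argue, using that $\mathbb{C}\setminus\overline{\Omega}$ is connected, that $\delta$ can be chosen small enough so that $K_m$ remains in the unbounded component of $\mathbb{C}\setminus\tilde L$. This topological lemma is the crux of the $X^\infty(\Omega)$ density step and is precisely what your proposal glosses over; it is the reason the paper defers to the treatment in the cited literature rather than reproducing the $H(\Omega)$ argument verbatim. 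Once this lemma is supplied, the rest of your argument (the perturbation $u$, its Pad\'e identities via Proposition~\ref{proposition 2.4}, and the $C^l$-control of $u-p$ via Cauchy on a fixed Euclidean neighborhood as $d\to 0$) is correct and complete.
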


\section{Simultaneous approximation for universal Pad\'{e} approximants of Type II}

In this section we prove for universal Pad\'{e} approximants of type II results as in the previous section and thus we strengthen results from \cite{NESTORIDIS3}. 

\begin{theorem} \label{theorem 4.1}

Let $\Omega \subseteq \mathbb{C}$ be an open set and $L, L' \subseteq \Omega$ two compact sets. Let also $K \subseteq \mathbb{C} \setminus \Omega$ be another compact set. We consider a sequence $(p_n)_{n \geq 1} \subseteq \mathbb{N}$ with $p_n \to + \infty$. Now, for every $n \in \mathbb{N}$ let $q_{1}^{(n)}, q_{2}^{(n)}, \cdots, q_{N(n)}^{(n)} \in \mathbb{N}$, where $N(n)$ is another natural number. Suppose also that:
$$ \min \{ q_{1}^{(n)}, q_{2}^{(n)}, \cdots, q_{N(n)}^{(n)} \} \to + \infty. $$ 

Then there exists a function $f \in H(\Omega)$ such that for every rational function $h$, there exists a subsequence $(p_{k_n})_{n \geq 1}$ of the sequence $(p_n)_{n \geq 1}$ satisfying the following:

\begin{itemize}

\item[(1)]
$f \in D_{p_{k_n}, q_{j}^{(k_n)}}(\zeta)$ for every $\zeta \in L$, for every $n \in \mathbb{N}$ and for every $j \in \{ 1, \cdots, N(k_n) \}$.

\item[(2)]
$\max_{j = 1, \cdots, N(k_n)} \sup_{\zeta \in L} \sup_{z \in K} \chi([f; p_{k_n} / q_{j}^{(k_n)}]_{\zeta}(z), h(z)) \to 0 \; \text{as} \; n \to + \infty$ .

\item[(3)]
$\max_{j = 1, \cdots, N(k_n)} \sup_{\zeta \in L} \sup_{z \in L'} |[f; p_{k_n} / q_{j}^{(k_n)}]_{\zeta}(z) - f(z)| \to 0 \; \text{as} \; n \to + \infty$.

\end{itemize}

Moreover, the set of all functions $f$ satisfing $(1) - (3)$ is dense and $G_{\delta}$ in $H(\Omega)$.

\end{theorem}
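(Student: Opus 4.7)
The plan is to adapt the Baire category scheme of Theorem \ref{theorem 3.3} to the chordal setting. Let $\{f_i\}_{i \geq 1}$ enumerate the rational functions with coefficients in $\mathbb{Q}+i\mathbb{Q}$; these are chordally dense in the space of all rational functions, uniformly on any compact subset of $\mathbb{C}$. For $i, s, n \in \mathbb{N}$ and $j \in \{1, \dots, N(n)\}$ I set
$$ A(i, s, n, j) = \Big\{ f \in H(\Omega) : f \in D_{p_n, q_j^{(n)}}(\zeta) \; \forall \zeta \in L, \; \sup_{\zeta \in L}\sup_{z \in K} \chi\big([f; p_n / q_j^{(n)}]_\zeta(z), f_i(z)\big) < \tfrac{1}{s} \Big\}, $$
$$ B(s, n, j) = \Big\{ f \in H(\Omega) : f \in D_{p_n, q_j^{(n)}}(\zeta) \; \forall \zeta \in L, \; \sup_{\zeta \in L}\sup_{z \in L'} \big|[f; p_n / q_j^{(n)}]_\zeta(z) - f(z)\big| < \tfrac{1}{s} \Big\}, $$
and put $A(i,s,n) = \bigcap_{j=1}^{N(n)} A(i,s,n,j)$, $B(s,n) = \bigcap_{j=1}^{N(n)} B(s,n,j)$. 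Using chordal density of $\{f_i\}$ together with the uniform equivalence of $\chi$ and $|\cdot|$ on compact subsets of $\mathbb{C}$, one checks that the target class $\mathcal{U}$ of functions satisfying $(1)$--$(3)$ equals $\bigcap_{i,s \in \mathbb{N}} \bigcup_{n \in \mathbb{N}} \big[A(i,s,n) \cap B(s,n)\big]$.

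Next I would establish openness and reduce to density. Each $A(i,s,n,j)$ and $B(s,n,j)$ is open in $H(\Omega)$: for $q_j^{(n)} = 0$ this is in \cite{MELAS.NESTORIDIS}, for $q_j^{(n)} \geq 1$ with the Euclidean set $B$ in \cite{FOURNODAULOS.NESTORIDIS}, \cite{MAKRIDIS1}, \cite{MAKRIDIS2}, and the chordal openness of $A$ in the Type II setting in \cite{NESTORIDIS3}; all these proofs exploit the continuous dependence of $[f; p_n / q_j^{(n)}]_\zeta(z)$ on $f$ and $\zeta$ via the Jacobi formulas of Remark \ref{remark 2.3}. Finite intersections over $j$ preserve openness, so by Baire's theorem it suffices to prove density of each $\bigcup_n \big[A(i,s,n) \cap B(s,n)\big]$ in $H(\Omega)$.

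For density, fix $i, s \in \mathbb{N}$, $g \in H(\Omega)$, a compact set $L'' \subseteq \Omega$, and $\varepsilon \in (0, 1/s)$; I seek an index $n$ and a function $u \in A(i,s,n) \cap B(s,n)$ with $\sup_{L''}|u - g| < \varepsilon$. By Lemma \ref{lemma 3.2} fix $L_{n_0}$ from an exhausting family of $\Omega$ with $L \cup L' \cup L'' \subseteq L_{n_0}$; then $L_{n_0}$ and $K$ are disjoint compacts. Using the chordal rational approximation tool of \cite{NESTORIDIS3} on the disjoint pair $(L_{n_0}, K)$, I construct a rational function $u = P/Q$ with $Q$ not vanishing on $L_{n_0}$, $\sup_{L_{n_0}} |u - g| < \varepsilon$, and $\sup_{K} \chi(u, f_i) < 1/s$. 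The hypotheses $p_n \to \infty$ and $\min_j q_j^{(n)} \to \infty$ then yield an index $n$ with $p_n > \deg P$ and $\min_j q_j^{(n)} > \deg Q$. Proposition \ref{proposition 2.4} applied to $u = P/Q$ gives $u \in D_{p_n, q_j^{(n)}}(\zeta)$ with $[u; p_n / q_j^{(n)}]_\zeta(z) \equiv u(z)$ for every $\zeta \in L \subseteq L_{n_0}$ (where $Q(\zeta) \neq 0$) and every $j \in \{1, \dots, N(n)\}$. Substituting this identity into the defining inequalities of $A(i,s,n,j)$ and $B(s,n,j)$ places $u$ in $A(i,s,n) \cap B(s,n)$ and completes the density step.

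The principal obstacle is the chordal rational approximation in the density step. Since $K$ is not assumed to have connected complement and $f_i$ may have poles inside $K$, Mergelyan's theorem is not directly applicable on $L_{n_0} \cup K$ as in the Type I arguments; instead the Roth-type chordal approximation of \cite{NESTORIDIS3} is required, and it must be arranged so that the resulting denominator $Q$ has no zero on $L_{n_0}$, equivalently that the poles of $u$ are confined to $K$ together with those of the Runge approximation of $g$ on $L_{n_0}$. The second subtlety is the simultaneous control of all $N(n)$ approximants at once; this is precisely why the hypothesis $\min_j q_j^{(n)} \to \infty$ is imposed, so that a single $n$ makes every $q_j^{(n)}$ exceed $\deg Q$ and Proposition \ref{proposition 2.4} applies uniformly in $j$.
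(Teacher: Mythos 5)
Your overall Baire scheme, the open-set definitions, and the reduction to a density claim are all in line with the paper. But the density argument has two genuine gaps, the first of which is fatal as written.

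First, and most seriously, you misapply Proposition~\ref{proposition 2.4}. You build a rational function $u = P/Q$ in lowest terms and then choose $n$ so that $p_n > \deg P$ \emph{and} $\min_j q_j^{(n)} > \deg Q$. But the final assertion of Proposition~\ref{proposition 2.4} is precisely that for $p > p_0$ and $q > q_0$ we have $f \notin D_{p,q}(\zeta)$; so with both inequalities strict you get $u \notin D_{p_n, q_j^{(n)}}(\zeta)$, the opposite of what you need. The paper avoids this by a perturbation step you omitted: after producing $A_{n_0}/B_{n_0}$, it sets $t = p_{k_{n_0}} - \deg B_{n_0}$ and replaces the rational function by $A_{n_0}/B_{n_0} + dz^t = (A_{n_0} + dz^t B_{n_0})/B_{n_0}$ for small $d \neq 0$, which forces the numerator to have degree \emph{exactly} $p_{k_{n_0}}$. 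Only then does part (iii) of Proposition~\ref{proposition 2.4} apply (numerator degree equal to $p_{k_{n_0}}$, denominator degree strictly below each $q_j^{(k_{n_0})}$), giving membership in $D_{p_{k_{n_0}}, q_j^{(k_{n_0})}}(\zeta)$ simultaneously for all $j$.

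Second, your candidate $u = P/Q$ is a rational function, not an element of $H(\Omega)$: $Q$ may vanish in $\Omega \setminus L_{n_0}$, so $u$ need not be holomorphic on all of $\Omega$, and the sets $A(i,s,n,j)$, $B(s,n,j)$ live inside $H(\Omega)$. You flag the pole-location issue as a ``subtlety'' but do not resolve it, and in general you cannot force all poles outside $\overline{\Omega}$ by Runge alone. The paper handles this with an additional approximation step: it chooses $L''$ so that every connected component of $(\mathbb{C}\cup\{\infty\})\setminus L''$ meets $(\mathbb{C}\cup\{\infty\})\setminus\Omega$, pushes the offending poles out of $\Omega$ to approximate the rational function on $L''$ by a genuine $g \in H(\Omega)$, and then uses Cauchy estimates on a uniform $r$-neighborhood of $L$ inside $L''$ to show that the finitely many Taylor coefficients entering $[g; p_{k_{n_0}}/q_j^{(k_{n_0})}]_\zeta$ are uniformly close (over $\zeta \in L$ and $j$) to those of the rational function, so that the Padé approximants are close as well. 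Without both the $dz^t$ perturbation and this final holomorphic-approximation step, the density claim does not go through.

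Minor point: the paper builds its rational approximant more explicitly than you indicate, by subtracting the sum $\mu$ of principal parts of $f_i$ on $K$, applying Runge to $w - \mu$, and adding $\mu$ back; this is what makes the chordal estimate on $K$ transparent. Your appeal to a ``chordal rational approximation tool of \cite{NESTORIDIS3}'' is a reasonable black box but you should at least record that the numerator and denominator you obtain are coprime, since that is needed for Proposition~\ref{proposition 2.4}.
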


\begin{proof} Let $\{ f_i \}_{i \geq 1}$ be an enumeration of all rational functions with the coefficients of the numerator and denominator in $\mathbb{Q} + i\mathbb{Q}$. There is also no problem to assume that for every $i \geq 1$, the numerator and the denominator do not have any common zeros in $\mathbb{C}$ .

Now, for every $i, s, n \in \mathbb{N}$ and for every $j \in \{1, \cdots N(n) \}$ we consider the following sets:
$$ A(j, n, s) = \{ f \in H(\Omega) : f \in D_{p_{n}, q_{j}^{(n)}}(\zeta) \; \text{and} $$
$$ \sup_{\zeta \in L} \sup_{z \in L'} |[f; p_{n} / q_{j}^{(n)}]_{\zeta}(z) - f(z)| < \frac{1}{s}\}. $$ 

$$ A(n, s) = \{ f \in H(\Omega) : f \in D_{p_{n}, q_{j}^{(n)}}(\zeta) \; \text{for every} \; j = 1, 2, \cdots, N(n) $$
$$ \text{and} \; \max_{j = 1, \cdots, N(n)} \sup_{\zeta \in L} \sup_{z \in L'} |[f; p_{n} / q_{j}^{(n)}]_{\zeta}(z) - f(z)| < \frac{1}{s}\} \equiv \bigcap_{j = 1}^{N(n)} A(j, n, s). $$ 

$$ B(i, j, n, s) = \{ f \in H(\Omega) : f \in D_{p_{n}, q_{j}^{(n)}}(\zeta) \; \text{and} $$
$$ \sup_{\zeta \in L} \sup_{z \in K} \chi([f; p_{n} / q_{j}^{(n)}]_{\zeta}(z), f_i(z)) < \frac{1}{s} \}. $$ 

$$ B(i, n, s) = \{ f \in H(\Omega) : f \in D_{p_{n}, q_{j}^{(n)}}(\zeta) \; \text{for every} \; j = 1, 2, \cdots, N(n) $$
$$ \text{and} \; \max_{j = 1, \cdots, N(n)} \sup_{\zeta \in L} \sup_{z \in K} \chi([f; p_{n} / q_{j}^{(n)}]_{\zeta}(z), f_i(z)) < \frac{1}{s} \}  \equiv \bigcap_{j = 1}^{N(n)} B(i, j, n, s). $$ 

One can verify that if $\mathcal{U}$ is the set of all functions satisfying $(1) - (3)$, then it holds:
$$ \mathcal{U} = \bigcap_{i, s \in \mathbb{N}} \bigcup_{n \in \mathbb{N}} A(n, s) \cap B(i, n, s) = $$
$$ = \bigcap_{i, s \in \mathbb{N}} \bigcup_{n \in \mathbb{N}} \Big[ \bigcap_{j = 1}^{N(n)} A(j, n, s) \cap \bigcap_{j = 1}^{N(n)} B(i, j, n, s) \Big]. $$

The sets $A(j, n, s)$ and $B(i, j, n, s)$ have been proven to be open for every parameter in \cite{NESTORIDIS3}. Thus, the sets $A(n, s) \equiv \bigcap_{j = 1}^{N(n)} A(j, n, s)$ and $B(i, n, s) \equiv \bigcap_{j = 1}^{N(n)} B(i, j, n, s)$ are also open, as a finite intersection of open sets. It follows that $\mathcal{U}$ is a $G_{\delta}$ subset of $H(\Omega)$.

\subsection{Density of \boldmath{$\mathcal{U}(i, s)$}}

In order to use Baire's theorem, we fix the parameters $i, s \in \mathbb{N}$ and we want to prove that the set:
$$ \mathcal{U}(i, s) = \bigcup_{n \in \mathbb{N}} A(n, s) \cap B(i, n, s) $$

is dense in $H(\Omega)$.

Let $L'' \subseteq \Omega$ be a compact set, $\phi \in H(\Omega)$ and $\varepsilon > 0$. We assume that $\varepsilon < \frac{1}{s}$. Our aim is to find a function $g \in \mathcal{U}(i, s)$ such that:
$$ \sup_{z \in L''} |\phi(z) - g(z)| < \varepsilon. $$

Without loss of generality, we suppose that $L \cup L' \subseteq (L'')^o$ and also that every connected component of $\mathbb{C} \cup \{ \infty \} \setminus L''$ contains a connected component of $\mathbb{C} \cup \{ \infty \} \setminus \Omega$. For instance, that can be achieved by using Lemma \ref{lemma 3.2}.

Consider now the following function:
$$ w(z) = \begin{cases} f_i(z) &\mbox{if} \; z \in K \\ 
\phi(z) &\mbox{if} \; z \in L'' \end{cases}. $$

The set of poles of $f_i$ on $K$ is finite; let $\mu$ denote the sum of the principal parts of $f_i$ on these poles. So, the function $\omega - \mu$ is holomorphic in an open set containing $L'' \cup K$. We apply Runge's theorem to approximate the function $\omega - \mu$ uniformly on $L'' \cup K$ with respect to the Euclidean distance by a sequence of rational functions: 
$$ \frac{\tilde{A}_n(z)}{\tilde{B}_n(z)} $$ 

So, there exists a natural number $n_0 \in \mathbb{N}$ satisfying the following:
$$ \sup_{z \in L'' \cup K} |(\omega(z) - \mu(z)) - \frac{\tilde{A}_n(z)}{\tilde{B}_n(z)}| < \frac{\varepsilon}{2} \; \text{for every} \; n \geq n_0. $$

In paricular, $\tilde{B}_n(z) \neq 0$ for every $z \in L'' \cup K$ and for every $n \geq n_0$.
There is also no problem to assume that the polynomials $\tilde{A}_n(z)$ and $\tilde{B}_n(z)$ have no common zeros in $\mathbb{C}$. On the other hand, the sequence of functions:

$$ \mu(z) + \frac{\tilde{A}_n(z)}{\tilde{B}_n(z)} = \frac{A_n(z)}{B_n(z)} $$ 

defined for $n \geq n_0$, satisfies:
$$ \sup_{z \in K} \chi(f_i(z), \mu(z)) + \frac{\tilde{A}_n(z)}{\tilde{B}_n(z)}) < \frac{\varepsilon}{2} $$

and 
$$ \sup_{z \in L''} |\phi(z) - \mu(z)) - \frac{\tilde{A}_n(z)}{\tilde{B}_n(z)}| < \frac{\varepsilon}{2} $$

for every $n \geq n_0$.

We notice that the polynomials $A_n(z)$ and $B_n(z)$ have no common zeros in $\mathbb{C}$, since:
$$ \mu(z) + \frac{\tilde{A}_n(z)}{\tilde{B}_n(z)} = \frac{\mu(z) + \tilde{A}_n(z)}{\tilde{B}_n(z)} = \frac{A_n(z)}{B_n(z)} $$ 

and also that $B_n(z) \neq 0$ for every $z \in L''$ and for every $n \geq n_0$, because the polynomials $\tilde{B}_n(z)$ have the same property for every $n \geq n_0$.

Since $p_n \to + \infty$ and $\min \{ q_{1}^{(n)}, q_{2}^{(n)}, \cdots, q_{N(n)}^{(n)} \} \to + \infty$, there exists an index $k_{n_0} > n_0\in \mathbb{N}$ such that:
$$ p_{k_{n_0}} > \max{ \{ deg(A_{n_0}(z)), deg(B_{n_0}(z)) \} } $$

and
$$ \min \{ q_{1}^{(k_{n_0})}, q_{2}^{(k_{n_0})}, \cdots, q_{N(k_{n_0})}^{(k_{n_0})} \} > deg(B_{n_0}(z)). $$

We set $t = p_{k_{n_0}} - deg(B_{n_0}(z))$ and we consider the function:
$$ \frac{A_{n_0}(z)}{B_{n_0}(z)} + dz^t = \frac{A_{n_0}(z) + dz^tB_{n_0}(z)}{B_{n_0}(z)}. $$

Now, for every $d \in \mathbb{C}$, the polynomials $A_{n_0}(z) + dz^tB_{n_0}(z)$ and $B_{n_0}(z)$ do not have any common zero in $\mathbb{C}$, because the polynomials $A_{n_0}(z)$ and $B_{n_0}(z)$ do so. If the parameter $d \in \mathbb{C}$ is close to zero (it suffices to demand $d \cdot \sup_{z \in L'' \cup K} |z^t| < \frac{\varepsilon}{2}$), then:
$$ \sup_{z \in K} \chi(\frac{A_{n_0}(z)}{B_{n_0}(z)} + dz^t, \omega(z)) < \varepsilon $$

and
$$ \sup_{z \in L''} |\frac{A_{n_0}(z)}{B_{n_0}(z)} + dz^t - \omega(z)| < \varepsilon. $$

Since $deg(B_{n_0}(z)) < \min \{ q_{1}^{(k_{n_0})}, q_{2}^{(k_{n_0})}, \cdots, q_{N(k_{n_0})}^{(k_{n_0})} \}$, for $d \in \mathbb{C} \setminus \{ 0 \}$ we have the following:

\begin{itemize}

\item[(1)] 
$deg(A_{n_0}(z) + dz^tB_{n_0}(z)) = p_{k_{n_0}}$.

\item[(2)]
 According to Proposition \ref{proposition 2.4} it holds:
$$ \frac{A_{n_0}(z)}{B_{n_0}(z)} + dz^t = \frac{A_{n_0}(z) + dz^tB_{n_0}(z)}{B_{n_0}(z)} \in D_{p_{k_{n_0}}, q_{j}^{(k_{n_0})}}(\zeta) $$ 

for every $j \in \{ 1, 2, \cdots, N(k_{n_0}) \}$ and for every $\zeta \in \mathbb{C}$ such that $B_{n_0}(\zeta) \neq 0$; in particular this holds for every $\zeta \in L$.

\item[(3)]
$$ [\frac{A_{n_0}(z)}{B_{n_0}(z)} + dz^t; p_{(k_{n_0})} / q_{j}^{(k_{n_0})}]_{\zeta}(z) = \frac{A_{n_0}(z)}{B_{n_0}(z)} + dz^t $$ 

for every $j \in \{ 1, 2, \cdots, N(k_{n_0}) \}$ and for every $\zeta \in L$. 

\end{itemize}

Thus, it holds:
$$ \max_{j \in \{ 1, 2, \cdots, N(k_{n_0}) \} } \sup_{\zeta \in L} \sup_{z \in K} \chi([\frac{A_{n_0}(z)}{B_{n_0}(z)} + dz^t; p_{k_{n_0}} / q_{j}^{(k_{n_0})}]_{\zeta}(z), f_i(z)) < \varepsilon $$

and
$$ \max_{j \in \{ 1, 2, \cdots, N(k_{n_0}) \} } \sup_{\zeta \in L} \sup_{z \in L'} |[\frac{A_{n_0}(z)}{B_{n_0}(z)} + dz^t; p_{k_{n_0}} / q_{j}^{(k_{n_0})}]_{\zeta}(z) - (\frac{A_{n_0}(z)}{B_{n_0}(z)} + dz^t)| = $$
$$ = 0 < \frac{1}{s} < \varepsilon. $$

We also have that:
$$ \sup_{z \in L''} |\frac{A_{n_0}(z)}{B_{n_0}(z)} + dz^t - \phi(z)| < \varepsilon. $$

Since the polynomials $A_{n_0}(z) + dz^t$ and $B_{n_0}(z)$ have no common zeros, we have that:
$$ \min_{z \in L' \cup K} |A_{n_0}(z) + dz^tB_{n_0}(z)|^2 + |B_{n_0}(z)|^2 > 0. $$ 

One can verify that these polynomials are the ones given by the Jacobi formulas for the function:
$$ [\frac{A_{n_0}(z)}{B_{n_0}(z)} + dz^t; p_{k_{n_0}} / q_{j}^{(k_{n_0})}]_{\zeta}(z) $$

for any $\zeta \in \mathbb{C}$ with $B_{n_0}(\zeta) \neq 0$ and for every $j \in \{ 1, 2, \cdots, N(k_{n_0}) \}$.

Since every connected component of $\mathbb{C} \cup \{ \infty \} \setminus L''$ contains a connected component of $\mathbb{C} \cup \{ \infty \} \setminus \Omega$, every zero of $B_{n_0}(z)$ in $\Omega \setminus L''$ lies in the same connected componet of $\mathbb{C} \cup \{ \infty \} \setminus L''$ with a point in $\mathbb{C} \cup \{ \infty \} \setminus \Omega$. Therefore we may approximate the function:
$$ \frac{A_{n_0}(z)}{B_{n_0}(z)} + dz^t $$

by a function $g \in H(\Omega)$. The approximations is uniform on $L''$ with respect to the Euclidean distance. Since $L \subseteq (L'')^o$, there exists $r > 0$ such that:
$$ \{ z \in \mathbb{C} : |z - \zeta| \leq r \} \subseteq (L'')^o \; \text{for all} \; \zeta \in L. $$

Now Cauchy estimates allow us to show that a finite number of Taylor coefficients of $g$ with center $\zeta \in L$ are uniformly close to the corresponding coefficients of:
$$ \frac{A_{n_0}(z)}{B_{n_0}(z)} + dz^t. $$

It is now easy to see that $g$ satisfies all requirements; the only difference of $g$ from $\frac{A_{n_0}(z)}{B_{n_0}(z)} + dz^t$ is that it is not true that $[g;p_{k_{n_0}} / q_{j}^{(k_{n_0})}]_{\zeta}(z) = g(z)$, but instead it holds:
$$ \max_{j \in \{ 1, 2, \cdots, N(k_{n_0}) \} } \sup_{\zeta \in L} \sup_{z \in L'} |[g; p_{k_{n_0}} / q_{j}^{(k_{n_0})}]_{\zeta}(z) - g(z)| \leq $$
$$ \max_{j \in \{ 1, 2, \cdots, N(k_{n_0}) \} } \sup_{\zeta \in L} \sup_{z \in L'} |[g; p_{k_{n_0}} / q_{j}^{(k_{n_0})}]_{\zeta}(z) - [\frac{A_{n_0}(z)}{B_{n_0}(z)} + dz^t; p_{k_{n_0}} / q_{j}^{(k_{n_0})}]_{\zeta}(z)| + $$
$$ + \max_{j \in \{ 1, 2, \cdots, N(k_{n_0}) \} } \sup_{z \in L'} |[\frac{A(z)}{B(z)} + dz^t; p_{k_{n_0}} / q_{j}^{(k_{n_0})}]_{\zeta}(z) - g(z)| $$

where the last two terms are small enough because $p_{k_{n_0}}$ and $q_{j}^{(k_{n_0})}$ are already fixed and thus we know which set of Taylor coefficients we have to control. Baire's theorem completes the proof.

\end{proof}

We present now two consequences of Theorem \ref{theorem 4.1}.

\begin{theorem} \label{theorem 4.2}

Let $\Omega \subseteq \mathbb{C}$ be an open set and $\zeta \in \Omega$ be a fixed point. We consider a sequence $(p_n)_{n \geq 1} \subseteq \mathbb{N}$ with $p_n \to + \infty$. Now, for every $n \in \mathbb{N}$ let $q_{1}^{(n)}, q_{2}^{(n)}, \cdots, q_{N(n)}^{(n)} \in \mathbb{N}$, where $N(n)$ is another natural number. Suppose also that $\min \{ q_{1}^{(n)}, q_{2}^{(n)}, \cdots, q_{N(n)}^{(n)} \} \to + \infty$. Then there exists a function $f \in H(\Omega)$ such that for every compact set $K \subseteq \mathbb{C} \setminus \Omega$ and for every rational function $h$, there exists a subsequence $(p_{k_n})_{n \geq 1}$ of the sequence $(p_n)_{n \geq 1}$ satisfying the following:

\begin{itemize}

\item[(1)]
$f \in D_{p_{k_n}, q_{j}^{(k_n)}}(\zeta)$ for every $n \in \mathbb{N}$ and for every $j \in \{ 1, \cdots, N(k_n) \}$.

\item[(2)]
$\max_{j = 1, \cdots, N(k_n)} \sup_{z \in K} \chi([f; p_{k_n} / q_{j}^{(k_n)}]_{\zeta}(z), h(z)) \to 0 \; \text{as} \; n \to + \infty$.

\item[(3)]
$\max_{j = 1, \cdots, N(k_n)} \sup_{z \in L'} |[f; p_{k_n} / q_{j}^{(k_n)}]_{\zeta}(z) - f(z)| \to 0 \; \text{as} \; n \to + \infty$ for every compact set $L' \subseteq \Omega$.

\end{itemize}

Moreover, the set of all functions $f$ satisfing $(1) - (3)$ is dense and $G_{\delta}$ in $H(\Omega)$.

\end{theorem}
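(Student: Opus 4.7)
The plan is to derive Theorem \ref{theorem 4.2} from Theorem \ref{theorem 4.1} by taking $L=\{\zeta\}$ and exhausting both $\Omega$ and $\mathbb{C}\setminus\Omega$ by compact subsets, combined with a standard diagonal extraction. First I would fix an exhausting family $\{L_k\}_{k\geq 1}$ of $\Omega$ by compact subsets (using Lemma \ref{lemma 3.2}) and an exhausting family $\{K_m\}_{m\geq 1}$ of $\mathbb{C}\setminus\Omega$ by compact subsets (for instance $K_m=(\mathbb{C}\setminus\Omega)\cap\overline{B}(0,m)$). For every pair $(k,m)\in\mathbb{N}^2$ I apply Theorem \ref{theorem 4.1} with the specific choices $L=\{\zeta\}$, $L'=L_k$, $K=K_m$, obtaining a dense $G_\delta$ class $\mathcal{C}_{k,m}\subseteq H(\Omega)$. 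Define $\mathcal{C}=\bigcap_{k,m\in\mathbb{N}}\mathcal{C}_{k,m}$; since $H(\Omega)$ is a complete metric space, Baire's theorem guarantees that $\mathcal{C}$ is dense and $G_\delta$.

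I would then show that $\mathcal{C}$ coincides with the class described in Theorem \ref{theorem 4.2}. The inclusion of the Theorem \ref{theorem 4.2} class inside $\mathcal{C}$ is immediate: a function satisfying Theorem \ref{theorem 4.2} yields, for every pair $(K_m,h)$, a subsequence that realizes in particular the conclusion of Theorem \ref{theorem 4.1} for $(L=\{\zeta\},L'=L_k,K=K_m,h)$, because the single subsequence provided by Theorem \ref{theorem 4.2} gives convergence on every compact $L'\subseteq\Omega$, hence on each $L_k$. For the reverse inclusion, given $f\in\mathcal{C}$, a compact $K\subseteq\mathbb{C}\setminus\Omega$, and a rational $h$, I pick $m$ with $K\subseteq K_m$, and for each $k$ use the membership $f\in\mathcal{C}_{k,m}$ to extract a subsequence $(p_{\lambda_j^{(k)}})_{j\geq 1}$ of $(p_n)$ along which the Pad\'e approximants $[f;p_{\lambda_j^{(k)}}/q_l^{(\lambda_j^{(k)})}]_\zeta$ exist for every admissible $l$, converge chordally to $h$ on $K_m$, and converge uniformly to $f$ on $L_k$.

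The key remaining step is a diagonal extraction producing strictly increasing indices $r_1<r_2<\cdots$ with $p_{r_n}$ drawn from the subsequence $(p_{\lambda_j^{(n)}})_{j\geq 1}$, chosen large enough that
\[
\max_l\sup_{z\in K_m}\chi\!\bigl([f;p_{r_n}/q_l^{(p_{r_n})}]_\zeta(z),h(z)\bigr)<\tfrac{1}{n},\qquad
\max_l\sup_{z\in L_n}\bigl|[f;p_{r_n}/q_l^{(p_{r_n})}]_\zeta(z)-f(z)\bigr|<\tfrac{1}{n}.
\]
Such a choice is legitimate because each $(\lambda_j^{(n)})_j$ already satisfies both estimates in its tail. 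Condition (2) of Theorem \ref{theorem 4.2} then follows from $K\subseteq K_m$, and condition (3) holds for any compact $L'\subseteq\Omega$ by fixing $k_0$ with $L'\subseteq L_{k_0}$ and using the second displayed bound for $n\geq k_0$. The main (mild) obstacle I expect is precisely this diagonal step: the subsequences produced by Theorem \ref{theorem 4.1} for different values of $k$ are a priori unrelated and not nested in each other, so one must choose $r_n$ carefully at each stage in order to guarantee simultaneously the monotonicity $r_n>r_{n-1}$ and the two tolerance bounds. This is the standard way to upgrade a countable family of Baire-generic classes with varying parameters to one that works for all parameters at once, and once it is in place Baire's theorem closes the argument.
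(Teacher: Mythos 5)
Your proposal is correct and takes essentially the same route as the paper: the paper's own proof of Theorem \ref{theorem 4.2} is the single line ``apply Theorem \ref{theorem 4.1} for $L=\{\zeta\}$,'' which implicitly relies on running over a countable family $\{(L_k,K_m)\}$ and then performing the diagonal extraction that the paper only spells out for the analogous Theorem \ref{theorem 4.3}. Your version makes those two steps explicit (and correctly identifies why they are needed, namely because Theorem \ref{theorem 4.1} fixes $L'$ and $K$); the only slip is notational: in your displayed bounds the superscript should be $q_l^{(r_n)}$ rather than $q_l^{(p_{r_n})}$, since the paper indexes the finite families $q_1^{(n)},\dots,q_{N(n)}^{(n)}$ by the position $n$, not by the value $p_n$.
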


\begin{proof} It suffices to apply Theorem \ref{theorem 4.1} for $L = \{ \zeta \}$.

\end{proof}

\begin{theorem} \label{theorem 4.3}

Let $\Omega \subseteq \mathbb{C}$ be an open. We consider a sequence $(p_n)_{n \geq 1} \subseteq \mathbb{N}$ with $p_n \to + \infty$. Now, for every $n \in \mathbb{N}$ let $q_{1}^{(n)}, q_{2}^{(n)}, \cdots, q_{N(n)}^{(n)} \in \mathbb{N}$, where $N(n)$ is another natural number. Suppose that:
$$ \min \{ q_{1}^{(n)}, q_{2}^{(n)}, \cdots, q_{N(n)}^{(n)} \} \to + \infty.$$ 

Then there exists a function $f \in H(\Omega)$ such that for every compact set $K \subseteq \mathbb{C} \setminus \Omega$ and for every rational function $h$, there exists a subsequence $(p_{k_n})_{n \geq 1}$ of the sequence $(p_{n})_{n \geq 1}$ satisfying the following:

\begin{itemize}

\item[(1)]
For every compact set $L \subseteq \Omega$, there exists a $n(L) \in \mathbb{N}$ such that $f \in D_{p_{k_n}, q_{j}^{(k_n)}}(\zeta)$ for every $\zeta \in L$, for every $n \geq n(L)$ and for every $j \in \{ 1, \cdots, N(k_n) \}$.

\item[(2)]
$\max_{j = 1, \cdots, N(k_n)} \sup_{\zeta \in L} \sup_{z \in K} \chi([f; p_{k_n} / q_{j}^{(k_n)}]_{\zeta}(z), h(z)) \to 0 \; \text{as} \; n \to + \infty$ for every compact set $L \subseteq \Omega$.

\item[(3)]
$\max_{j = 1, \cdots, N(k_n)} \sup_{\zeta \in L} \sup_{z \in L} |[f; p_{k_n} / q_{j}^{(k_n)}]_{\zeta}(z) - f(z)| \to 0 \; \text{as} \; n \to + \infty$ for every compact set $L \subseteq \Omega$.

\end{itemize}

Moreover, the set of all functions $f$ satisfing $(1) - (3)$ is dense and $G_{\delta}$ in $H(\Omega)$.

\end{theorem}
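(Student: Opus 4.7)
My plan is to deduce Theorem \ref{theorem 4.3} from Theorem \ref{theorem 4.1} by the same pattern that deduced Theorem \ref{theorem 3.7} from Theorem \ref{theorem 3.3}. The only additional ingredient compared with the proof of Theorem \ref{theorem 3.7} is that one must also allow the outer compact set $K \subseteq \mathbb{C} \setminus \Omega$ to vary, which is handled through an absorbing family $\{K_m\}_{m \geq 1}$.

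First, I would fix an exhausting sequence $\{L_k\}_{k \geq 1}$ of compact subsets of $\Omega$ via Lemma \ref{lemma 3.2}, and an absorbing sequence $\{K_m\}_{m \geq 1}$ of compact subsets of $\mathbb{C} \setminus \Omega$, for instance $K_m = (\mathbb{C} \setminus \Omega) \cap \overline{B}(0, m)$. In the Type II setting with the chordal metric, no connected-complement assumption on the $K_m$ is needed. For each pair $(k, m)$, Theorem \ref{theorem 4.1} applied with $L = L' = L_k$ and $K = K_m$ yields a dense $G_{\delta}$ subset $\mathcal{C}_{k, m} \subseteq H(\Omega)$; Baire's theorem then gives that $\mathcal{C} := \bigcap_{k, m \geq 1} \mathcal{C}_{k, m}$ is dense and $G_{\delta}$ in $H(\Omega)$.

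It remains to identify $\mathcal{C}$ with the class $\mathcal{U}$ of Theorem \ref{theorem 4.3}. The inclusion $\mathcal{U} \subseteq \mathcal{C}$ is routine: if $f \in \mathcal{U}$ and $(k, m)$ is fixed, then applying the definition of $\mathcal{U}$ to $K = K_m$ and an arbitrary rational $h$, and passing to the tail of the resulting subsequence from the index $n(L_k)$ onwards, produces a subsequence of $(p_n)$ realising conditions (1)--(3) of Theorem \ref{theorem 4.1} for $(L_k, L_k, K_m, h)$, so $f \in \mathcal{C}_{k, m}$. For the reverse inclusion $\mathcal{C} \subseteq \mathcal{U}$ I fix $f \in \mathcal{C}$, a compact $K \subseteq \mathbb{C} \setminus \Omega$, a rational $h$, and choose $m_0$ with $K \subseteq K_{m_0}$. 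For each $k$, the membership $f \in \mathcal{C}_{k, m_0}$ supplies a subsequence $\alpha_k$ of $(p_n)$ along which Theorem \ref{theorem 4.1} holds on $(L_k, L_k, K_{m_0}, h)$. I then extract, by a diagonal choice, a strictly increasing sequence of indices $p_{k_j} \in \alpha_j$ on which the maxima in (2)--(3) over $(L_j, K_{m_0})$ are less than $1/j$. The monotonicity $L \subseteq L_k \subseteq L_j$ for every $j \geq k$ and the inclusion $K \subseteq K_{m_0}$ then translate into conditions (1)--(3) of Theorem \ref{theorem 4.3} with $n(L) = k$.

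The only delicate point is the diagonal extraction: it succeeds precisely because the family $\{L_k\}$ is nested, so that an index good enough for the larger $L_j$ automatically controls the sups and the $D_{\cdot, \cdot}$-membership on every smaller $L_k$ with $k \leq j$. Without this monotonicity one could not in general merge the countable family of subsequences $\{\alpha_k\}$ into a single subsequence serving every compact $L \subseteq \Omega$. Beyond this, the argument is a routine combination of Baire's theorem with Theorem \ref{theorem 4.1} and parallels the deduction of Theorem \ref{theorem 3.7} from Theorem \ref{theorem 3.3}.
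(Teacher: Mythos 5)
Your proposal is correct and follows essentially the same route as the paper: apply Theorem \ref{theorem 4.1} with $L = L' = L_k$ and $K = K_m$, intersect the resulting dense $G_\delta$ classes over $k, m$ (the paper's displayed formula $\mathcal{U} = \bigcup_{n,m} \mathcal{U}_{k,m}$ is evidently a typo for an intersection over $k, m$), use a diagonal argument over the nested $L_k$ to identify this intersection with the class of Theorem \ref{theorem 4.3}, and invoke Baire's theorem. You merely spell out the diagonal step and the two inclusions in more detail than the paper does.
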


\begin{proof} We apply Theorem \ref{theorem 4.1} for $L = L' = L_k$ and for $K = K_m$. In that way we obtain a $G_{\delta}$ - dense class $\mathcal{U}_{k, m}$ of $H(\Omega)$. One can verify (by using a diagonal argument) that if  $\mathcal{U}$ is the class of all functions of $H(\Omega)$ satisfying $(1) - (3)$, then it holds:
$$ \mathcal{U} = \bigcup_{n, m \in \mathbb{N}} \mathcal{U}_{k, m}. $$

The result follows once more from Baire's theorem.

\end{proof}

Now we present similar results where the roles of $p$ and $q$ have been interchanged.

\begin{theorem} \label{theorem 4.4}

Let $\Omega \subseteq \mathbb{C}$ be an open set and $L, L' \subseteq \Omega$ two compact sets. Let also $K \subseteq \mathbb{C} \setminus \Omega$ be another compact set. We consider a sequence $(q_n)_{n \geq 1} \subseteq \mathbb{N}$ with $q_n \to + \infty$. Now, for every $n \in \mathbb{N}$ let $p_{1}^{(n)}, p_{2}^{(n)}, \cdots, p_{N(n)}^{(n)} \in \mathbb{N}$, where $N(n)$ is another natural number. Suppose also that:
$$ \min \{ p_{1}^{(n)}, p_{2}^{(n)}, \cdots, p_{N(n)}^{(n)} \} \to + \infty. $$

Then there exists a function $f \in H(\Omega)$ such that for every rational function $h$, there exists a subsequence $(q_{k_n})_{n \geq 1}$ of the sequence $(q_n)_{n \geq 1}$ such that the following hold:

\begin{itemize}

\item[(1)]
$f \in D_{p_{j}^{(k_n)}, q_{k_n}}(\zeta)$ for every $\zeta \in L$, for every $n \in \mathbb{N}$ and for every $j \in \{ 1, \cdots, N(k_n) \}$.

\item[(2)]
$\max_{j = 1, \cdots, N(k_n)} \sup_{\zeta \in L} \sup_{z \in K} \chi([f; p_{j}^{(k_n)} / q_{k_n}]_{\zeta}(z), h(z)) \to 0 \; \text{as} \; n \to + \infty$.

\item[(3)]
$\max_{j = 1, \cdots, N(k_n)} \sup_{\zeta \in L} \sup_{z \in L'} |[f; p_{j}^{(k_n)} / q_{k_n}]_{\zeta}(z) - f(z)| \to 0 \; \text{as} \; n \to + \infty$.

\end{itemize}

Moreover, the set of all functions $f$ satisfing $(1) - (3)$ is dense and $G_{\delta}$ in $H(\Omega)$.

\end{theorem}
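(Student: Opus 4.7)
The plan is to mirror the proof of Theorem \ref{theorem 4.1} verbatim, swapping the roles of $p$ and $q$. Specifically, I would define for every $i,s,n \in \mathbb{N}$ and $j \in \{1,\dots,N(n)\}$ the open sets
$$ A(j,n,s) = \{ f \in H(\Omega) : f \in D_{p_j^{(n)}, q_n}(\zeta) \text{ for all } \zeta \in L, \; \sup_{\zeta \in L}\sup_{z \in L'} |[f; p_j^{(n)}/q_n]_\zeta(z) - f(z)| < \tfrac{1}{s} \} $$
and
$$ B(i,j,n,s) = \{ f \in H(\Omega) : f \in D_{p_j^{(n)}, q_n}(\zeta) \text{ for all } \zeta \in L, \; \sup_{\zeta \in L}\sup_{z \in K} \chi([f; p_j^{(n)}/q_n]_\zeta(z), f_i(z)) < \tfrac{1}{s} \}, $$
with $\{f_i\}_{i\ge 1}$ an enumeration of rational functions over $\mathbb{Q}+i\mathbb{Q}$ in lowest form. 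Setting $A(n,s) = \bigcap_{j=1}^{N(n)} A(j,n,s)$ and $B(i,n,s) = \bigcap_{j=1}^{N(n)} B(i,j,n,s)$, the target class $\mathcal{U}$ is exactly $\bigcap_{i,s}\bigcup_n [A(n,s) \cap B(i,n,s)]$, so Baire's theorem reduces the problem to openness of the building blocks and density of each $\mathcal{U}(i,s) = \bigcup_n[A(n,s) \cap B(i,n,s)]$.

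Openness is handled exactly as in Theorem \ref{theorem 4.1}: the results of \cite{NESTORIDIS3} apply with the pair $(p_j^{(n)}, q_n)$ in place of $(p_n, q_j^{(n)})$, and finite intersections preserve openness. So the entire content is the density step, which is where the construction must be adapted.

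For density, I would fix $g \in H(\Omega)$, a compact $L'' \subseteq \Omega$ (enlarged if necessary so that $L \cup L' \subseteq (L'')^o$ and every connected component of $\widehat{\mathbb{C}} \setminus L''$ meets $\widehat{\mathbb{C}} \setminus \Omega$), and $\varepsilon < 1/s$. Defining $w = f_i$ on $K$ and $w = g$ on $L''$, subtracting the principal parts $\mu$ of $f_i$ at its poles on $K$, and applying Runge's theorem produces rational functions $A_{n_0}(z)/B_{n_0}(z) = \mu(z) + \tilde{A}_{n_0}(z)/\tilde{B}_{n_0}(z)$ with $\gcd(A_{n_0},B_{n_0})=1$, $B_{n_0}$ nonvanishing on $L'' \cup K$, and
$$\sup_{z \in K}\chi(f_i(z), A_{n_0}(z)/B_{n_0}(z)) < \varepsilon/2, \qquad \sup_{z \in L''}|g(z) - A_{n_0}(z)/B_{n_0}(z)| < \varepsilon/2.$$
Now, using $q_n \to +\infty$ and $\min_j p_j^{(n)} \to +\infty$, I choose an index $k_{n_0}$ with $q_{k_{n_0}} > \deg B_{n_0}$ and $\min_j p_j^{(k_{n_0})} \geq \deg A_{n_0}$. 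In Theorem \ref{theorem 4.1} the perturbation was $A_{n_0}/B_{n_0} + dz^t$, which forced the numerator degree to equal $p_{k_{n_0}}$. The symmetric move here is to \emph{inflate the denominator degree} to $q_{k_{n_0}}$: set $t = q_{k_{n_0}} - \deg B_{n_0}$ and consider
$$ G(z) = \frac{A_{n_0}(z)}{B_{n_0}(z)\bigl(1 + dz^t\bigr)}, $$
with $d \in \mathbb{C}\setminus\{0\}$ chosen (i) small enough that $\sup_{L'' \cup K}|G - A_{n_0}/B_{n_0}| < \varepsilon/2$ in the Euclidean/$\chi$ sense and $1+dz^t$ avoids $L''$'s values, and (ii) generic so that the finitely many roots of $A_{n_0}$ avoid the roots of $1+dz^t$, guaranteeing that the numerator $A_{n_0}$ and denominator $B_{n_0}(z)(1+dz^t)$ of $G$ share no zero in $\mathbb{C}$.

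With the coprime representation of $G$ having numerator degree $\deg A_{n_0} \leq p_j^{(k_{n_0})}$ and denominator degree exactly $q_{k_{n_0}}$, Proposition \ref{proposition 2.4}(iii) yields $G \in D_{p_j^{(k_{n_0})}, q_{k_{n_0}}}(\zeta)$ with $[G; p_j^{(k_{n_0})}/q_{k_{n_0}}]_\zeta(z) \equiv G(z)$ for every $\zeta \in L$ and every $j \in \{1,\dots,N(k_{n_0})\}$. Finally, the passage from the globally-defined rational function $G$ to a genuine element $g^* \in H(\Omega)$ is performed exactly as in Theorem \ref{theorem 4.1}: the assumption on connected components of $\widehat{\mathbb{C}}\setminus L''$ lets one pole-push the zeros of $B_{n_0}(1+dz^t)$ lying in $\Omega\setminus L''$ out of $\Omega$ via Runge, producing $g^* \in H(\Omega)$ uniformly close to $G$ on $L''$; Cauchy estimates on the discs around $\zeta \in L$ then guarantee that the finitely many Taylor coefficients entering the Jacobi determinants for $[g^*; p_j^{(k_{n_0})}/q_{k_{n_0}}]_\zeta$ are close to those of $G$, so $g^* \in A(k_{n_0},s) \cap B(i,k_{n_0},s)$ while $\sup_{z \in L''}|g^*(z) - g(z)| < \varepsilon$. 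The main subtle point, as in Theorem \ref{theorem 4.1}, is the perturbation-estimate for the Pad\'e approximants under the passage $G \mapsto g^*$, since $[g^*;\cdot/\cdot]_\zeta \neq g^*$ in general; but because the indices $p_j^{(k_{n_0})}, q_{k_{n_0}}$ are already fixed, only finitely many coefficients are involved and the Jacobi formulas depend continuously on them, so the same bookkeeping as in the cited proof applies. Baire's theorem then completes the argument.
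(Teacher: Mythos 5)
Your proof is correct and is exactly the adaptation the paper has in mind: Theorem \ref{theorem 4.4} is stated without proof as a ``$p \leftrightarrow q$ interchange'' of Theorem \ref{theorem 4.1}, and your replacement of the additive numerator perturbation $A_{n_0}/B_{n_0} + dz^t$ (which pins the numerator degree to $p_{k_{n_0}}$) by the multiplicative denominator perturbation $A_{n_0}/\bigl(B_{n_0}(1+dz^t)\bigr)$ (which pins the denominator degree to $q_{k_{n_0}}$) is the right symmetric move, just as in the proof of Theorem \ref{theorem 3.8} for the Type~I analogue. Two small points to tidy up. First, the relevant part of Proposition \ref{proposition 2.4} is item (ii), not (iii): you have $\deg A_{n_0} = p_0 \le p_j^{(k_{n_0})}$ and $\deg\bigl(B_{n_0}(1+dz^t)\bigr) = q_0 = q_{k_{n_0}}$, so you are increasing $p$ while holding $q$ fixed at $q_0$. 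Second, the chordal estimate on $K$ for your multiplicative perturbation requires one line more than the additive case: for an additive perturbation the bound $\chi(a+c,a)\le|c|$ is immediate, whereas here you need that $G = F\cdot(1+v)^{-1}$ with $v = dz^t$ and $|v|$ small on the bounded set $K$ forces $\chi(G,F)$ to be small \emph{uniformly in $F$}; this follows from $\chi(a,b)=\chi(1/a,1/b)$ together with $|1/(1+v)-1|\le 2|v|$ for $|v|\le 1/2$, giving $\chi(F(1+v)^{-1},F)\le 2|v|$ both when $|F|\le 1$ (directly) and when $|F|>1$ (after inverting). With these two remarks the argument is complete and matches the paper's intent.
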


\begin{theorem} \label{theorem 4.5}

Let $\Omega \subseteq \mathbb{C}$ be an open set and $\zeta \in \Omega$ be a fixed point. We consider a sequence $(q_n)_{n \geq 1} \subseteq \mathbb{N}$ with $q_n \to + \infty$. Now, for every $n \in \mathbb{N}$ let $p_{1}^{(n)}, p_{2}^{(n)}, \cdots, p_{N(n)}^{(n)} \in \mathbb{N}$, where $N(n)$ is another natural number. Suppose also that $\min \{ p_{1}^{(n)}, p_{2}^{(n)}, \cdots, p_{N(n)}^{(n)} \} \to + \infty$. Then there exists a function $f \in H(\Omega)$ such that for every compact set $K \subseteq \mathbb{C} \setminus \Omega$ and for every rational function $h$, there exists a subsequence $(q_{k_n})_{n \geq 1}$ of the sequence $(q_n)_{n \geq 1}$ satisfying the following:

\begin{itemize}

\item[(1)]
$f \in D_{p_{j}^{(k_n)}, q_{k_n}}(\zeta)$ for every $n \in \mathbb{N}$ and for every $j \in \{ 1, \cdots, N(k_n) \}$.

\item[(2)]
$\max_{j = 1, \cdots, N(k_n)} \sup_{z \in K} \chi([f; p_{j}^{(k_n)} / q_{k_n}]_{\zeta}(z), h(z)) \to 0 \; \text{as} \; n \to + \infty$.

\item[(3)]
$\max_{j = 1, \cdots, N(k_n)} \sup_{z \in L'} |[f; p_{j}^{(k_n)} / q_{k_n}]_{\zeta}(z) - f(z)| \to 0 \; \text{as} \; n \to + \infty$ for every compact set $L' \subseteq \Omega$.

\end{itemize}

Moreover, the set of all functions $f$ satisfing $(1) - (3)$ is dense and $G_{\delta}$ in $H(\Omega)$.

\end{theorem}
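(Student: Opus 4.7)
The plan is to deduce Theorem \ref{theorem 4.5} from Theorem \ref{theorem 4.4} applied with $L = \{\zeta\}$, by letting $L'$ and $K$ range over countable exhausting/absorbing families of $\Omega$ and $\mathbb{C}\setminus\Omega$ respectively, and then combining the resulting generic classes via Baire's theorem and a diagonal extraction. This parallels the proof of Theorem \ref{theorem 4.3}, which deduced the all-centers statement from Theorem \ref{theorem 4.1}.

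Concretely, using Lemma \ref{lemma 3.2} fix an exhausting family $\{L_k\}_{k \geq 1}$ of compact subsets of $\Omega$ with $L_k \subseteq L_{k+1}^{o}$ and the property that every compact $L'\subseteq \Omega$ lies in some $L_k$, and pick a countable absorbing family $\{K_m\}_{m\geq 1}$ of compact subsets of $\mathbb{C}\setminus\Omega$ such that every compact $K\subseteq\mathbb{C}\setminus\Omega$ is contained in some $K_m$. For each pair $(k,m)$ apply Theorem \ref{theorem 4.4} with the choice $L = \{\zeta\}$, $L' = L_k$, $K = K_m$; this yields a dense $G_\delta$ subset $\mathcal{U}_{k,m}$ of $H(\Omega)$. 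Set $\mathcal{U} := \bigcap_{k,m \in \mathbb{N}} \mathcal{U}_{k,m}$, which is again a dense $G_\delta$ subset of $H(\Omega)$ by Baire's theorem, since $H(\Omega)$ is a complete metric space.

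It remains to check that every $f \in \mathcal{U}$ satisfies the conclusion of Theorem \ref{theorem 4.5}. Fix such an $f$, a compact $K \subseteq \mathbb{C}\setminus\Omega$ and a rational function $h$; pick $m_0$ with $K \subseteq K_{m_0}$. For each $k \geq 1$, since $f \in \mathcal{U}_{k,m_0}$, Theorem \ref{theorem 4.4} supplies a subsequence $\bigl(q_{r_n^{(k)}}\bigr)_{n\geq 1}$ of $(q_n)$ along which conditions $(1)$–$(3)$ hold with $L = \{\zeta\}$, $L' = L_k$ and $K = K_{m_0}$. The main work is then a standard diagonal extraction: one chooses indices $k_1 < k_2 < \cdots$ so that $q_{k_n}$ is the $n$-th term of the sequence $\bigl(q_{r_\ell^{(n)}}\bigr)_{\ell\geq \ell_n}$ for a suitably chosen tail $\ell_n$, ensuring that for every fixed $j$, every fixed $k$, and every $\epsilon>0$, the quantities
\[
\max_{j} \sup_{z \in K_{m_0}} \chi\bigl([f; p_j^{(k_n)}/q_{k_n}]_\zeta(z), h(z)\bigr) \quad\text{and}\quad \max_{j} \sup_{z \in L_k} \bigl|[f; p_j^{(k_n)}/q_{k_n}]_\zeta(z) - f(z)\bigr|
\]
fall below $\epsilon$ for all $n$ large, while simultaneously $f \in D_{p_j^{(k_n)}, q_{k_n}}(\zeta)$ for every admissible $j$. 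Given any compact $L'\subseteq \Omega$ one picks $k$ with $L'\subseteq L_k$, which yields $(3)$, and $(2)$ follows from $K\subseteq K_{m_0}$.

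The only step that requires genuine care is the diagonal extraction, which is routine but must be written out so that the single subsequence $(q_{k_n})$ handles all compact $L'\subseteq \Omega$ at once (the analogous diagonal step is invoked, without proof, at the end of the proof of Theorem \ref{theorem 4.3}). Everything else — the openness of the constituent sets, the density calculation, and the Baire argument — is inherited directly from Theorem \ref{theorem 4.4}, so no fresh construction is needed.
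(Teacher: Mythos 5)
Your proposal is correct and matches the paper's intent, but fills in details the paper omits. The paper's entire proof of Theorem \ref{theorem 4.5} reads ``It suffices to apply Theorem \ref{theorem 4.4} for $L = \{\zeta\}$'', which taken literally only gives a dense $G_\delta$ class for one \emph{fixed} pair $(L', K)$; the remaining work — intersecting over the exhausting families $\{L_k\}$ and $\{K_m\}$, invoking Baire, and running the diagonal extraction so a single subsequence handles all compact $L'$ — is exactly what the paper spells out in the sibling Theorems \ref{theorem 4.3} and \ref{theorem 4.6} but leaves implicit here, and you supply it correctly. Two small points you might add for full rigor: (i) the diagonal step uses that $L_k \subseteq L_{k+1}$, so the $n$-th chosen index, drawn from the subsequence associated with $L_n$ and $K_{m_0}$, simultaneously controls every $L_k$ with $k \le n$; and (ii) to conclude that the set $\mathcal{V}$ of functions satisfying $(1)$–$(3)$ is itself $G_\delta$, note the trivial reverse inclusion $\mathcal{V} \subseteq \bigcap_{k,m}\mathcal{U}_{k,m}$ (a subsequence working for $K_m$ and all compact $L'$ in particular works for $L_k$), so $\mathcal{V} = \bigcap_{k,m}\mathcal{U}_{k,m}$ exactly.
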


\begin{proof} It suffices to apply Theorem \ref{theorem 4.4} for $L = \{ \zeta \}$.

\end{proof}

\begin{theorem} \label{theorem 4.6}

Let $\Omega \subseteq \mathbb{C}$ be an open set. We consider a sequence $(q_n)_{n \geq 1} \subseteq \mathbb{N}$ with $q_n \to + \infty$. Now, for every $n \in \mathbb{N}$ let $p_{1}^{(n)}, p_{2}^{(n)}, \cdots, p_{N(n)}^{(n)} \in \mathbb{N}$, where $N(n)$ is another natural number. Suppose that:
$$ \min \{ p_{1}^{(n)}, p_{2}^{(n)}, \cdots, p_{N(n)}^{(n)} \} \to + \infty. $$

Then there exists a function $f \in H(\Omega)$ such that for every compact set $K \subseteq \mathbb{C} \setminus \Omega$ and for every rational function $h$, there exists a subsequence $(q_{k_n})_{n \geq 1}$ of the sequence $(q_n)_{n \geq 1}$ satisfying the following:

\begin{itemize}

\item[(1)]
For every compact set $L \subseteq \Omega$, there exists a $n(L) \in \mathbb{N}$ such that $f \in D_{p_{j}^{(k_n)}, q_{k_n}}(\zeta)$ for every $n \geq n(L)$ and for every $j \in \{ 1, \cdots, N(k_n) \}$.

\item[(2)]
$\max_{j = 1, \cdots, N(k_n)} \sup_{\zeta \in L} \sup_{z \in K} \chi([f; p_{j}^{(k_n)} / q_{k_n}]_{\zeta}(z), h(z)) \to 0 \; \text{as} \; n \to + \infty$ for every compact set $L \subseteq \Omega$.

\item[(3)]
$\max_{j = 1, \cdots, N(k_n)} \sup_{\zeta \in L} \sup_{\zeta \in L} |[f; p_{j}^{(k_n)} / q_{k_n}]_{\zeta}(z) - f(z)| \to 0 \; \text{as} \; n \to + \infty$ for every compact set $L \subseteq \Omega$.

\end{itemize}

Moreover, the set of all functions $f$ satisfing $(1) - (3)$ is dense and $G_{\delta}$ in $H(\Omega)$.

\end{theorem}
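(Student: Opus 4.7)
The plan is to deduce Theorem \ref{theorem 4.6} from Theorem \ref{theorem 4.4} via a diagonal extraction, in the spirit of the derivation of Theorem \ref{theorem 4.3} from Theorem \ref{theorem 4.1}. First I would fix an exhausting family $\{L_k\}_{k\geq 1}$ of compact subsets of $\Omega$ given by Lemma \ref{lemma 3.2} and an absorbing family $\{K_m\}_{m\geq 1}$ of compact subsets of $\mathbb{C}\setminus\Omega$ with connected complements given by Lemma \ref{lemma 3.1}. For each pair $(k,m)\in\mathbb{N}\times\mathbb{N}$, I apply Theorem \ref{theorem 4.4} with the particular choice $L=L'=L_k$ and $K=K_m$; this produces a dense $G_\delta$ subset $\mathcal{U}_{k,m}$ of $H(\Omega)$. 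Setting $\mathcal{V}:=\bigcap_{k,m\geq 1}\mathcal{U}_{k,m}$ and invoking Baire's theorem in the completely metrizable space $H(\Omega)$, I conclude that $\mathcal{V}$ is itself dense and $G_\delta$.

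It then suffices to verify that every $f\in\mathcal{V}$ belongs to the class $\mathcal{U}$ of functions satisfying (1)--(3) of Theorem \ref{theorem 4.6}, since this inclusion yields both the existence assertion and the genericity claim. Fix $f\in\mathcal{V}$, a compact $K\subseteq\mathbb{C}\setminus\Omega$, and a rational function $h$, and choose $m_0$ with $K\subseteq K_{m_0}$. For each $k\geq 1$ the condition $f\in\mathcal{U}_{k,m_0}$, applied to this particular $h$, produces a subsequence $(q_{\lambda^{(k)}_n})_{n\geq 1}$ of $(q_n)_{n\geq 1}$ realizing conclusions (1)--(3) of Theorem \ref{theorem 4.4} with $L=L'=L_k$ and $K=K_{m_0}$. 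I then merge these countably many subsequences by a diagonal procedure: choose indices $n_k$ recursively so that $q_{\lambda^{(k)}_{n_k}}>q_{\lambda^{(k-1)}_{n_{k-1}}}$, so that $f\in D_{p^{(\lambda^{(k)}_{n_k})}_j,q_{\lambda^{(k)}_{n_k}}}(\zeta)$ for every $\zeta\in L_k$ and every $j\in\{1,\dots,N(\lambda^{(k)}_{n_k})\}$, and so that
\[
\max_{j}\sup_{\zeta\in L_k}\sup_{z\in K}\chi\bigl([f;p^{(\lambda^{(k)}_{n_k})}_j/q_{\lambda^{(k)}_{n_k}}]_\zeta(z),h(z)\bigr)<\frac{1}{k}
\]
and
\[
\max_{j}\sup_{\zeta\in L_k}\sup_{z\in L_k}\bigl|[f;p^{(\lambda^{(k)}_{n_k})}_j/q_{\lambda^{(k)}_{n_k}}]_\zeta(z)-f(z)\bigr|<\frac{1}{k}.
\]
Setting $q_{k_n}:=q_{\lambda^{(n)}_{n_n}}$ for every $n\geq 1$ gives a single subsequence of $(q_n)_{n\geq 1}$. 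For an arbitrary compact $L\subseteq\Omega$ pick $k_0$ with $L\subseteq L_{k_0}$; then for every $n\geq k_0$ we have $L\subseteq L_{k_0}\subseteq L_n$ and the bounds above force (1) with $n(L)=k_0$ as well as (2)--(3).

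The only delicate step is the diagonal extraction itself, which has to be arranged so that one and the same subsequence $(q_{k_n})$ simultaneously serves every compact $L\subseteq\Omega$; the exhaustion property $L\subseteq L_{k_0}$ together with the recursive choice of $n_k$ handles this. No new analytic construction is needed, since all the substantive content — openness of the sets defining the relevant $G_\delta$ subsets of $H(\Omega)$, the Runge/Mergelyan approximation of the piecewise function $w$ on $L''\cup K$, and the Type II perturbation $\frac{A(z)}{B(z)}+dz^t$ that simultaneously secures $f\in D_{p,q}(\zeta)$ on $L$ and $[f;p/q]_\zeta\equiv f$ — was already carried out inside the proof of Theorem \ref{theorem 4.1} and is inherited here through Theorem \ref{theorem 4.4}.
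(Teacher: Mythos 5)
Your reduction — applying Theorem \ref{theorem 4.4} with $L=L'=L_k$ and $K=K_m$, intersecting the resulting dense $G_\delta$ classes $\mathcal{U}_{k,m}$ over $k,m$, invoking Baire, and then using a diagonal extraction to fold the countably many subsequences into one — is exactly the paper's argument (the paper states the diagonal step as ``one can verify''; you have written it out, and you have the correct $\bigcap_{k,m}$ where the paper's text misprints a $\bigcup$). One point to tighten: the inclusion $\mathcal{V}\subseteq\mathcal{U}$ gives density and existence but by itself only shows $\mathcal{U}$ is residual, not $G_\delta$; for the stated genericity you also need $\mathcal{U}\subseteq\mathcal{V}$, which is immediate since any $f\in\mathcal{U}$, applied to $K=K_m$, $L=L_k$ and a rational $h$, yields a subsequence satisfying (2)--(3) and eventually (1), and dropping the first $n(L_k)-1$ terms of that subsequence shows $f\in\mathcal{U}_{k,m}$.
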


\begin{proof} We apply Theorem \ref{theorem 4.4} for $L = L' = L_k$ and for $K = K_m$. In that way we obtain a $G_{\delta}$ - dense class $\mathcal{U}_{k, m}$ of $H(\Omega)$. One can verify (by using a diagonal argument) that if  $\mathcal{U}$ is the class of all functions of $H(\Omega)$ satisfying $(1) - (3)$, then it holds:
$$ \mathcal{U} = \bigcup_{n, m \in \mathbb{N}} \mathcal{U}_{k, m}. $$

The result follows once more from Baire's theorem.

\end{proof}

Next we strengthen a result from \cite{NESTORIDIS3} conserning meromorphic functions.

Let $\Omega \subseteq \mathbb{C}$ be an open set. We consider the following set:

$$ Z(\Omega) = \{ f: \Omega \to \mathbb{C} \cup \{ \infty \} \} $$

endowed with the topology of uniform convergence with respect to $\chi$ on each compact subset of $\Omega$. It is known that $Z(\Omega)$ is a complete metric space. We denote by $M(\Omega)$ the closure in $Z(\Omega)$ of the set of all rational functions. Obviously, $M(\Omega)$ is a complete metric space itself, as a closed subspace of $Z(\Omega)$. See also \cite{NESTORIDIS3}. In a similar way to Theorems \ref{theorem 4.1} and \ref{theorem 4.4}, we obtain the following results.

\begin{theorem} \label{theorem 4.7}

Let $\Omega \subseteq \mathbb{C}$ be an open set and $\zeta \in \Omega$ be a fixed element. Let also $L \subseteq \Omega$ and $K \subseteq \mathbb{C} \setminus \Omega$ be two compact sets. We consider a sequence $(p_n)_{n \geq 1} \subseteq \mathbb{N}$ with $p_n \to + \infty$. Now, for every $n \in \mathbb{N}$ let $q_{1}^{(n)}, q_{2}^{(n)}, \cdots, q_{N(n)}^{(n)} \in \mathbb{N}$, where $N(n)$ is another natural number. Suppose also that $\min \{ q_{1}^{(n)}, q_{2}^{(n)}, \cdots, q_{N(n)}^{(n)} \} \to + \infty$. Then there exists a function $f \in M(\Omega)$ with $f(\zeta) \neq \infty$ such that for every rational function $h$ there exists a subsequence $(p_{k_n})_{n \geq 1}$ of the sequence $(p_{n})_{n \geq 1}$ satisfying the following:

\begin{itemize}

\item[(1)]
$f \in D_{p_{k_n}, q_{j}^{(k_n)}}(\zeta)$ for every $n \in \mathbb{N}$ and for every $j \in \{ 1, \cdots, N(k_n) \}$.

\item[(2)]
$\max_{j = 1, \cdots, N(k_n)} \sup_{z \in K} \chi([f; p_{k_n} / q_{j}^{(k_n)}]_{\zeta}(z), h(z)) \to 0 \; \text{as} \; n \to + \infty$.

\item[(3)]
$\max_{j = 1, \cdots, N(k_n)} \sup_{z \in L} \chi([f; p_{k_n} / q_{j}^{(k_n)}]_{\zeta}(z), f(z)) \to 0 \; \text{as} \; n \to + \infty$.

\end{itemize}

Moreover, the set of all functions $f$ satisfing $(1) - (3)$ is dense and $G_{\delta}$ in $M(\Omega)$.

\end{theorem}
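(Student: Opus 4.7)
The proof runs in close parallel to that of Theorem \ref{theorem 4.1}, with two adaptations: we work in the complete metric space $M(\Omega)$ (topology of $\chi$-uniform convergence on compacta) in place of $H(\Omega)$, and condition $(3)$ is phrased in the chordal distance because $f\in M(\Omega)$ may take the value $\infty$ on $L$. I would enumerate rational functions $\{f_i\}_{i\ge1}$ with coefficients in $\mathbb{Q}+i\mathbb{Q}$ (numerator and denominator in lowest terms), and for each $i,s,n\in\mathbb{N}$ and $j\in\{1,\ldots,N(n)\}$ define
\[
A(j,n,s)=\bigl\{f\in M(\Omega):\, f(\zeta)\neq\infty,\, f\in D_{p_n,q_j^{(n)}}(\zeta),\, \sup_{z\in L}\chi([f;p_n/q_j^{(n)}]_\zeta(z),f(z))<\tfrac1s\bigr\},
\]
\[
B(i,j,n,s)=\bigl\{f\in M(\Omega):\, f(\zeta)\neq\infty,\, f\in D_{p_n,q_j^{(n)}}(\zeta),\, \sup_{z\in K}\chi([f;p_n/q_j^{(n)}]_\zeta(z),f_i(z))<\tfrac1s\bigr\}.
\]
The set $\mathcal{U}$ of functions satisfying $(1)$--$(3)$ equals $\bigcap_{i,s}\bigcup_n[\bigcap_jA(j,n,s)\cap\bigcap_jB(i,j,n,s)]$, and openness of each $A(j,n,s)$ and $B(i,j,n,s)$ in $M(\Omega)$ is precisely the kind of statement established in \cite{NESTORIDIS3} for Type II Pad\'e approximants on $M(\Omega)$; citing that work yields the $G_\delta$ property.

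For density, fix $i,s$, and let $\phi\in M(\Omega)$, $L''\subseteq\Omega$ compact, $\varepsilon<1/s$. Using Lemma \ref{lemma 3.2}, enlarge $L''$ so that $L\cup\{\zeta\}\subseteq(L'')^\circ$ and every connected component of $\tilde{\mathbb{C}}\setminus L''$ meets $\tilde{\mathbb{C}}\setminus\Omega$. Since the rationals are dense in $M(\Omega)$, there is a rational $R$ with $\sup_{L''}\chi(\phi,R)<\varepsilon/4$. Let $\mu$ be the sum of the principal parts of $R$ at its poles in $L''$ plus those of $f_i$ at its poles in $K$; then $w-\mu$, where $w=R$ on $L''$ and $w=f_i$ on $K$, is holomorphic on an open neighborhood of the disjoint union $L''\cup K$. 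Apply Runge's theorem to approximate $w-\mu$ uniformly on $L''\cup K$ in the Euclidean metric by a reduced rational $\tilde A/\tilde B$, and set $A_{n_0}/B_{n_0}:=\mu+\tilde A/\tilde B$ (also reduced after cancellation), obtaining the chordal estimates $\sup_K\chi(A_{n_0}/B_{n_0},f_i)<\varepsilon/2$ and $\sup_{L''}\chi(A_{n_0}/B_{n_0},\phi)<\varepsilon/2$.

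Next mimic the final step of Theorem \ref{theorem 4.1}: choose $k_{n_0}$ with $p_{k_{n_0}}>\max(\deg A_{n_0},\deg B_{n_0})$ and $\min_j q_j^{(k_{n_0})}>\deg B_{n_0}$, set $t=p_{k_{n_0}}-\deg B_{n_0}$, and perturb to $A_{n_0}/B_{n_0}+dz^t$ with $|d|$ so small that the $\chi$-approximations of $f_i$ on $K$ and of $\phi$ on $L''$ are preserved within $\varepsilon$. Proposition \ref{proposition 2.4} then guarantees that this perturbed rational function belongs to $D_{p_{k_{n_0}},q_j^{(k_{n_0})}}(\zeta)$ and equals its own $(p_{k_{n_0}},q_j^{(k_{n_0})})$-Pad\'e approximant at $\zeta$ for every $j$, so that both $(1)$ and the exact versions of $(2)$--$(3)$ hold for it.

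Finally, because every component of $\tilde{\mathbb{C}}\setminus L''$ meets $\tilde{\mathbb{C}}\setminus\Omega$, a Runge-type pole-pushing argument produces $g\in M(\Omega)$ with $\sup_{L''}\chi\bigl(g,A_{n_0}/B_{n_0}+dz^t\bigr)$ arbitrarily small, and Cauchy estimates on a closed disk around $\zeta$ inside $(L'')^\circ$ transfer this closeness to the first $p_{k_{n_0}}+\max_jq_j^{(k_{n_0})}+1$ Taylor coefficients of $g$ at $\zeta$. Since the Pad\'e approximants depend Lipschitz-continuously on these finitely many coefficients (via the Jacobi formulas of Remark \ref{remark 2.3}), $g$ lies in $\bigcap_jA(j,k_{n_0},s)\cap\bigcap_jB(i,j,k_{n_0},s)$ and is within $\chi$-distance $\varepsilon$ of $\phi$ on $L''$. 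Baire's theorem then completes the argument. The main obstacle is the simultaneous chordal control on $L$ (between $g$ and its Pad\'e approximants) and on $L''$ (between $g$ and a possibly pole-bearing $\phi$); the explicit pole-subtraction via principal parts before applying Runge is what makes this simultaneous control possible.
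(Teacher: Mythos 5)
Your proposal is correct and follows the right template, namely redoing the density argument of Theorem \ref{theorem 4.1} inside the complete metric space $M(\Omega)$ rather than $H(\Omega)$. The paper's own proof of Theorem \ref{theorem 4.7} is extremely terse: it simply says to ``apply Theorem \ref{theorem 4.1} for $L = L_k$ and $K = K_m$'' to get $G_\delta$-dense classes $\mathcal{U}(k,m)$ of $M(\Omega)$ and then take a union. As written this is garbled, since Theorem \ref{theorem 4.1} yields $G_\delta$-density in $H(\Omega)$ (which is nowhere dense in $M(\Omega)$), and a union of dense $G_\delta$ sets is not itself $G_\delta$, so Baire does not apply directly. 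What the paper really means is ``repeat the Baire argument of Theorem \ref{theorem 4.1} in $M(\Omega)$,'' and that is exactly what you spell out: define the sets $A(j,n,s)$ and $B(i,j,n,s)$ in $M(\Omega)$ with the chordal distance in both conditions (since $f$ may take $\infty$ on $L$), cite \cite{NESTORIDIS3} for their openness, and prove density of each $\mathcal{U}(i,s)$ by the principal-part subtraction plus Runge plus $dz^t$-perturbation construction. Your explicit first step of $\chi$-approximating $\phi\in M(\Omega)$ by a rational $R$ and subtracting the principal parts of both $R$ on $L''$ and $f_i$ on $K$ is exactly the right adaptation of the Type II argument to a possibly pole-bearing target $\phi$.

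One step is superfluous. In the $H(\Omega)$ setting of Theorem \ref{theorem 4.1}, the closing pole-pushing and Cauchy-coefficient argument is unavoidable: the perturbed rational $A_{n_0}/B_{n_0}+dz^{t}$ may have poles in $\Omega\setminus L''$ and is therefore not an element of $H(\Omega)$, so one must first push those poles out of $\Omega$ and then re-verify the Pad\'e conditions through coefficient control. Here you are in $M(\Omega)$, which contains every rational function. The perturbed rational $g:=A_{n_0}/B_{n_0}+dz^{t}$ already belongs to $M(\Omega)$, has $g(\zeta)\neq\infty$ (as $B_{n_0}(\zeta)\neq 0$), lies in $D_{p_{k_{n_0}},q_j^{(k_{n_0})}}(\zeta)$ for every $j$ by Proposition \ref{proposition 2.4} with $[g;p_{k_{n_0}}/q_j^{(k_{n_0})}]_\zeta\equiv g$, and satisfies $\sup_{L''}\chi(g,\phi)<\varepsilon$. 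Hence $g$ itself lies in $\bigcap_j A(j,k_{n_0},s)\cap\bigcap_j B(i,j,k_{n_0},s)$ with equality rather than approximation in conditions $(1)$--$(3)$, and the final pole-pushing plus Lipschitz-continuity-of-Jacobi-formulas argument can be dropped entirely. Including it does not make the proof incorrect, but it imports a difficulty from the $H(\Omega)$ case that simply does not arise in $M(\Omega)$.
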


\begin{proof} We apply Theorem \ref{theorem 4.1} for $L = L_k$ and for $K = K_m$ and in that way we obtain a $G_{\delta}$ - dense class of $M(\Omega)$; the class $\mathcal{U}(k, m)$. If $\mathcal{U}$ is the class of all functions of $M(\Omega)$ satisfying the result, then it holds:
$$ \mathcal{U} = \bigcup_{n, m \in \mathbb{N}} \mathcal{U}(k, m). $$

The reult follows from Baire's theorem.

\end{proof}

Next we state some other results whose proofs are similar to the previous ones and are omitted.

\begin{theorem} \label{theorem 4.8}

Let $\Omega \subseteq \mathbb{C}$ be an open set and $\zeta \in \Omega$ be a fixed point. We consider a sequence $(p_n)_{n \geq 1} \subseteq \mathbb{N}$ with $p_n \to + \infty$. Now, for every $n \in \mathbb{N}$ let $q_{1}^{(n)}, q_{2}^{(n)}, \cdots, q_{N(n)}^{(n)} \in \mathbb{N}$, where $N(n)$ is another natural number. Suppose also that $\min \{ q_{1}^{(n)}, q_{2}^{(n)}, \cdots, q_{N(n)}^{(n)} \} \to + \infty$. Then there exists a function $f \in M(\Omega)$ with $f(\zeta) \neq \infty$ satisfying the following:

For every compact set $K \subseteq \mathbb{C} \setminus \Omega$, for every compact set $L \subseteq \Omega$ and for every rational function $h$, there exists a subsequence $(p_{k_n})_{n \geq 1}$ of the sequence $(p_n)_{n \geq 1}$ such that:

\begin{itemize}

\item[(1)]
$f \in D_{p_{k_n}, q_{j}^{(k_n)}}(\zeta)$ for every $n \in \mathbb{N}$ and for every $j \in \{ 1, \cdots, N(k_n) \}$.

\item[(2)]
$\max_{j = 1, \cdots, N(k_n)} \sup_{z \in K} \chi([f; p_{k_n} / q_{j}^{(k_n)}]_{\zeta}(z), h(z)) \to 0 \; \text{as} \; n \to + \infty$.

\item[(3)]
$\max_{j = 1, \cdots, N(k_n)} \sup_{z \in L} \chi([f; p_{k_n} / q_{j}^{(k_n)}]_{\zeta}(z), f(z)) \to 0 \; \text{as} \; n \to + \infty$.

\end{itemize}

Moreover, the set of all functions $f$ satisfing $(1) - (3)$ is dense and $G_{\delta}$ in $M(\Omega)$.

\end{theorem}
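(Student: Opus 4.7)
The plan is to deduce Theorem \ref{theorem 4.8} from Theorem \ref{theorem 4.7} by a standard exhaustion-plus-Baire argument, in complete analogy with how Theorem \ref{theorem 4.3} was obtained from Theorem \ref{theorem 4.1}. First I would invoke Lemma \ref{lemma 3.1} to fix an absorbing sequence $\{K_m\}_{m \geq 1}$ of compact subsets of $\mathbb{C} \setminus \Omega$ with connected complements, and Lemma \ref{lemma 3.2} to fix an exhausting family $\{L_k\}_{k \geq 1}$ of compact subsets of $\Omega$. Every compact $K \subseteq \mathbb{C} \setminus \Omega$ is contained in some $K_m$ and every compact $L \subseteq \Omega$ is contained in some $L_k$.

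For each pair $(k,m) \in \mathbb{N} \times \mathbb{N}$, I would apply Theorem \ref{theorem 4.7} with $L := L_k$ and $K := K_m$ to obtain a dense $G_{\delta}$ subset $\mathcal{U}(k,m)$ of $M(\Omega)$ consisting of functions $f \in M(\Omega)$ with $f(\zeta) \neq \infty$ such that, for every rational function $h$, there exists a subsequence $(p_{k_n})_{n \geq 1}$ of $(p_n)_{n \geq 1}$ satisfying $f \in D_{p_{k_n}, q_j^{(k_n)}}(\zeta)$ for all $n$ and $j$, with $\chi$-uniform convergence of $[f;p_{k_n}/q_j^{(k_n)}]_{\zeta}$ to $h$ on $K_m$ and to $f$ on $L_k$, uniformly in $j \in \{1,\ldots,N(k_n)\}$. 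Since $M(\Omega)$ is a complete metric space, Baire's theorem yields that the set
$$ \mathcal{U} := \bigcap_{k,m \in \mathbb{N}} \mathcal{U}(k,m) $$
is a dense $G_{\delta}$ subset of $M(\Omega)$.

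Next I would verify that every $f \in \mathcal{U}$ satisfies the conclusion of the theorem. Given an arbitrary compact set $K \subseteq \mathbb{C} \setminus \Omega$, an arbitrary compact set $L \subseteq \Omega$, and any rational function $h$, I choose $m \in \mathbb{N}$ with $K \subseteq K_m$ and $k \in \mathbb{N}$ with $L \subseteq L_k$. Since $f \in \mathcal{U}(k,m)$, Theorem \ref{theorem 4.7} provides a subsequence $(p_{k_n})_{n \geq 1}$ for which (1), (2) and (3) hold with $K_m, L_k$ in place of $K,L$; restricting the suprema to the smaller sets $K$ and $L$ preserves these convergences, which gives precisely conditions (1)--(3) of the present theorem. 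Finally, $f(\zeta) \neq \infty$ because each $\mathcal{U}(k,m)$ already satisfies this constraint.

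The only delicate point is to check that the set of functions satisfying the conclusion of Theorem \ref{theorem 4.8} coincides with $\mathcal{U}$ (so that the density/$G_\delta$ claim follows), and in particular that the verification above does not require a \emph{single} subsequence working for all compact sets and all $h$ simultaneously; the statement only demands one subsequence per triple $(K,L,h)$, and this is exactly what the exhaustion by $(L_k, K_m)$ provides. The easy reverse inclusion, that any such function lies in every $\mathcal{U}(k,m)$, is immediate from specializing $K = K_m$ and $L = L_k$. No new Mergelyan- or Runge-type construction is needed: the entire difficulty of the approximation has already been absorbed into Theorem \ref{theorem 4.1} and Theorem \ref{theorem 4.7}. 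Thus the proof reduces cleanly to a diagonal union with Baire's theorem.
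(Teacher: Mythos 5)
Your strategy -- exhaust $(L,K)$ by a countable family of compacts, apply Theorem \ref{theorem 4.7} to each pair, intersect the resulting dense $G_\delta$ classes via Baire, and then verify by restriction -- is the correct and intended one; the paper omits the proof of Theorem \ref{theorem 4.8} with the remark that it is similar to preceding arguments, and this is precisely how Theorem \ref{theorem 4.3} is obtained from Theorem \ref{theorem 4.1}. You also correctly write $\bigcap_{k,m}\mathcal{U}(k,m)$ where the paper's analogous displays in the proofs of Theorems \ref{theorem 4.3} and \ref{theorem 4.7} print $\bigcup$, evidently a typo.

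There is, however, a concrete flaw in your choice of exhaustion of $\mathbb{C}\setminus\Omega$. You invoke Lemma \ref{lemma 3.1}, but its sequence $\{K_m\}$ consists of compacts with connected complement and is guaranteed to absorb only those compacts $K\subseteq\mathbb{C}\setminus\Omega$ which themselves have connected complement; moreover that lemma is stated for a domain $\Omega$, whereas here $\Omega$ is merely open. In Theorem \ref{theorem 4.8} (as in Theorems \ref{theorem 4.1} and \ref{theorem 4.7} -- this is the Type II, Runge/chordal setting) the compact $K\subseteq\mathbb{C}\setminus\Omega$ is arbitrary and need not have connected complement: for instance with $\Omega$ the unit disc and $K=\{|z|=r\}$, $r>1$, no compact $K_m\subseteq\mathbb{C}\setminus\Omega$ with connected complement can contain $K$, since such a $K_m$ must leave a channel joining $\Omega$ to $\infty$. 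So with Lemma \ref{lemma 3.1}'s $K_m$ your step ``choose $m$ with $K\subseteq K_m$'' can fail. The repair is immediate: take instead a plain exhaustion such as $K_m:=(\mathbb{C}\setminus\Omega)\cap\overline{B}(0,m)$, which absorbs every compact subset of $\mathbb{C}\setminus\Omega$ and to which Theorem \ref{theorem 4.7} still applies (no connectedness of $\mathbb{C}\setminus K_m$ is required there). After that substitution your intersection and restriction argument go through unchanged.
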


We interchange the roles of $p$ and $q$.

\begin{theorem} \label{theorem 4.9}

Let $\Omega \subseteq \mathbb{C}$ be an open set and $\zeta \in \Omega$ be a fixed element. We consider a sequence $(q_n)_{n \geq 1} \subseteq \mathbb{N}$ with $q_n \to + \infty$. Now, for every $n \in \mathbb{N}$ let $p_{1}^{(n)}, p_{2}^{(n)}, \cdots, p_{N(n)}^{(n)} \in \mathbb{N}$, where $N(n)$ is another natural number. Suppose also that $\min \{ p_{1}^{(n)}, p_{2}^{(n)}, \cdots, p_{N(n)}^{(n)} \} \to + \infty$. Then there exists a function $f \in M(\Omega)$ with $f(\zeta) \neq \infty$ satisfying the following:

For every compact set $K \subseteq \mathbb{C} \setminus \Omega$, for every compact set $L \subseteq \Omega$ and for every rational function $h$, there exists a subsequence $(q_{k_n})_{n \geq 1}$ of the sequence $(q_{n})_{n \geq 1}$ such that:

\begin{itemize}

\item[(1)]
$f \in D_{p_{j}^{(k_n)}, q_{k_n}}(\zeta)$ for every $n \in \mathbb{N}$ and for every $j \in \{ 1, \cdots, N(k_n) \}$.

\item[(2)]
$\max_{j = 1, \cdots, N(k_n)} \sup_{z \in K} \chi([f; p_{j}^{(k_n)} / q_{k_n}]_{\zeta}(z), h(z)) \to 0 \; \text{as} \; n \to + \infty$.

\item[(3)]
$\max_{j = 1, \cdots, N(k_n)} \sup_{z \in L} \chi([f; p_{j}^{(k_n)} / q_{k_n}]_{\zeta}(z), f(z)) \to 0 \; \text{as} \; n \to + \infty$.

\end{itemize}

Moreover, the set of all functions $f$ satisfing $(1) - (3)$ is dense and $G_{\delta}$ in $M(\Omega)$.

\end{theorem}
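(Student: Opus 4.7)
My plan is to adapt the construction used in the proof of Theorem \ref{theorem 4.4} to the $M(\Omega)$ setting, where Euclidean convergence on $L'$ is replaced by chordal convergence on $L$, and then to absorb the universal quantifiers over $K$ and $L$ by the same exhaustion/diagonal device that was used to deduce Theorem \ref{theorem 4.7} from Theorem \ref{theorem 4.1}. First I would establish a ``single compact'' version. Fix compact sets $L \subseteq \Omega$ and $K \subseteq \mathbb{C} \setminus \Omega$, and let $\{f_i\}_{i \geq 1}$ enumerate the rational functions with Gaussian-rational coefficients and no common factors. For $i, n, s \in \mathbb{N}$ and $j \in \{1, \dots, N(n)\}$ define
$$A(j,n,s) = \{f \in M(\Omega) : f \in D_{p_j^{(n)}, q_n}(\zeta),\; \sup_{z \in L} \chi([f; p_j^{(n)}/q_n]_\zeta(z),\, f(z)) < 1/s\},$$
$$B(i,j,n,s) = \{f \in M(\Omega) : f \in D_{p_j^{(n)}, q_n}(\zeta),\; \sup_{z \in K} \chi([f; p_j^{(n)}/q_n]_\zeta(z),\, f_i(z)) < 1/s\},$$
and set $A(n,s) = \bigcap_j A(j,n,s)$, $B(i,n,s) = \bigcap_j B(i,j,n,s)$. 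Openness in $M(\Omega)$ of each $A(j,n,s)$ and $B(i,j,n,s)$ is provided by the arguments of \cite{NESTORIDIS3} (continuity of the Jacobi-formula Padé denominator as a nonvanishing function of finitely many Taylor coefficients at $\zeta$, plus continuity of $f \mapsto f|_L$ with respect to $\chi$), and the level-$n$ intersections remain open.

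The heart of the matter is density of $\mathcal{U}(i,s) = \bigcup_n A(n,s) \cap B(i,n,s)$ at this single level. Given $\phi \in M(\Omega)$, a compact $L'' \subseteq \Omega$ containing $L$ and $\zeta$ in its interior, and $\varepsilon \in (0, 1/s)$, I would glue $f_i$ on $K$ to $\phi$ on $L''$, subtract the principal parts of $f_i$ on its poles in $K$, and apply Runge's theorem to obtain a rational $A_0/B_0$, with no common zeros and with $B_0$ nonvanishing on $L'' \cup K$, approximating the glued function within $\varepsilon/2$ on $L'' \cup K$. Choosing $n_0$ large enough so that $q_{k_{n_0}} > \deg B_0$ and $\min_j p_j^{(k_{n_0})} > \deg A_0$, and setting $t = q_{k_{n_0}} - \deg B_0$, I would pass to
$$u(z) = \frac{A_0(z)}{B_0(z)\,(1 + d z^t)}$$
with $d \in \mathbb{C} \setminus \{0\}$ small. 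For such $d$, $u$ remains within $\varepsilon$ of the glued target (in $\chi$ on $K$, in $|\cdot|$ on $L''$), its denominator has degree exactly $q_{k_{n_0}}$, its numerator has degree $\deg A_0 < \min_j p_j^{(k_{n_0})}$, and for generic small $d$ numerator and denominator share no zero in $\mathbb{C}$. Proposition \ref{proposition 2.4}(ii) then yields $u \in D_{p_j^{(k_{n_0})}, q_{k_{n_0}}}(\zeta)$ and $[u; p_j^{(k_{n_0})}/q_{k_{n_0}}]_\zeta \equiv u$ for every $j$. Finally a pole-pushing step, exploiting that every connected component of $\mathbb{C} \cup \{\infty\} \setminus L''$ meets a component of $\mathbb{C} \cup \{\infty\} \setminus \Omega$, replaces $u$ by $g \in M(\Omega)$ close to $u$ uniformly on $L''$; Cauchy estimates on a small disk around $\zeta \in (L'')^\circ$ guarantee that the finitely many Taylor coefficients of $g$ used by the Jacobi formulas remain close to those of $u$, so $g \in \mathcal{U}(i,s)$ with $\sup_{L''}|g - \phi| < \varepsilon$.

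With the single-compact version in hand, I would apply it with $L = L' = L_k$ taken from the exhausting family of $\Omega$ (Lemma \ref{lemma 3.2}) and with $K = K_m$ taken from an exhausting family of compact subsets of $\mathbb{C} \setminus \Omega$, producing for each $(k,m) \in \mathbb{N}^2$ a $G_\delta$-dense subclass $\mathcal{U}(k,m) \subseteq M(\Omega)$. The desired class is then $\mathcal{U} = \bigcap_{k,m} \mathcal{U}(k,m) \cap \{f \in M(\Omega) : f(\zeta) \neq \infty\}$, where the last factor is open dense in $M(\Omega)$ (open because $\chi$-nearness to a finite value stays finite, dense because rational functions without a pole at $\zeta$ are dense in $M(\Omega)$); by Baire's theorem $\mathcal{U}$ is $G_\delta$-dense, and for any compact $L \subseteq \Omega$ and $K \subseteq \mathbb{C} \setminus \Omega$ one selects $(k,m)$ with $L \subseteq L_k$ and $K \subseteq K_m$ and inherits the subsequence from $\mathcal{U}(k,m)$. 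The main obstacle I anticipate is the pole-pushing stage: once $u$ is replaced by $g \in M(\Omega)$, the identity $[u;\cdot/\cdot]_\zeta \equiv u$ is destroyed, and one must still control $\sup_L \chi([g; p_j^{(k_{n_0})}/q_{k_{n_0}}]_\zeta,\, g)$ uniformly in $j$. This is handled exactly as in the density argument of Theorem \ref{theorem 4.1}: the $q_{k_{n_0}} \times q_{k_{n_0}}$ Hankel determinant of $u$ at $\zeta$ is nonzero and depends continuously on finitely many Taylor coefficients, so the Jacobi-formula Padé approximants of $g$ converge to those of $u$ as $g \to u$, uniformly in the finitely many indices $j$.
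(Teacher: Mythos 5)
Your overall strategy — Baire category in $M(\Omega)$ with open level sets defined by the Padé conditions, a Runge--based density argument using Proposition \ref{proposition 2.4}, and an exhaustion over $(L_k, K_m)$ to absorb the universal quantifiers, plus intersecting with the open dense set $\{f : f(\zeta) \neq \infty\}$ — is the right one, and matches the line the paper follows (it omits this proof, and its proof of the analogous Theorem \ref{theorem 4.7} is terse). But there is a genuine gap in the density step.

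You apply Runge's theorem to $w - \mu$, where $w$ glues $f_i$ on $K$ to $\phi$ on $L''$ and $\mu$ is the sum of principal parts of $f_i$ at its poles in $K$. For this to make sense, $w - \mu$ must be holomorphic on an open set containing $L'' \cup K$. But $\phi$ is an arbitrary element of $M(\Omega)$: it is a $\chi$-uniform limit of rational functions, takes values in $\mathbb{C} \cup \{\infty\}$, and may equal $\infty$ at points of $L''$. So $w$ need not be holomorphic near $L''$, and Runge does not apply. The problem also surfaces downstream: your $A_0/B_0$ has $B_0$ nonvanishing on $L''$, hence is bounded there, and a bounded rational function cannot be $\chi$-close on $L''$ to a $\phi$ that is $\infty$ somewhere on $L''$. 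The fix is to exploit the definition of $M(\Omega)$ first: replace $\phi$ by a rational function $R$ with $\sup_{L''}\chi(R,\phi)$ small (rational functions are dense in $M(\Omega)$), then glue $f_i$ on $K$ to $R$ on $L''$ and subtract the principal parts of \emph{both} $f_i$ over $K$ and $R$ over $L''$ before invoking Runge.

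Separately, the pole-pushing stage you flag as the main obstacle is in fact unnecessary in $M(\Omega)$ and should simply be deleted. In the $H(\Omega)$ argument of Theorem \ref{theorem 4.1} one must replace the rational $u$ by a holomorphic $g$ (because $u$ may have poles in $\Omega$ and $g$ must live in $H(\Omega)$), and one then has to fight to keep $[g; \cdot / \cdot]_\zeta$ close to $g$ via Cauchy estimates. But in $M(\Omega)$ the rational function $u$ is already a member of the space, is $\chi$-close to $\phi$ on $L''$, and satisfies $[u; p_j^{(k_{n_0})}/q_{k_{n_0}}]_\zeta \equiv u$, so the third condition holds with value exactly $0$. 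Taking $u$ itself as the approximant closes the density step cleanly, with no Cauchy-estimate control required and no risk of leaving $M(\Omega)$.
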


We present now two generic results for Universal Pad\'{e} approximants of Type II for formal power series, without proofs which are similar to the previous ones. Thus we strengthen results from \cite{DARAS.NESTORIDIS.PAPADIMITROPOULOS}. 

\begin{theorem} \label{theorem 4.10}

Let $(p_n)_{n \geq 0} \subseteq \mathbb{N}$ be a sequence such that $p_n \to + \infty$. Now, for every $n \in \mathbb{N}$, let $q^{(n)}_{0}, \cdots, q^{(n)}_{M(n)} \in \mathbb{N}$, where $M(n) \in \mathbb{N}$. Suppose also that $\min \{ q^{(n)}_{0}, \cdots, q^{(n)}_{M(n)} \} \to + \infty$. Then there exists a formal power series $f(z) = \sum_{n = 0}^{+ \infty} a_n z^n$ with $(a_n)_{n \geq 0} \in \mathbb{C}^{\mathbb{N}}$, satisfying the following:

For every compact set $K \subseteq \mathbb{C} \setminus \{ 0 \}$ and for every function $h(z) \in M(\mathbb{C} \setminus \{ 0 \})$, there exists a subsequence $(p_{k_n})_{n \geq 0}$ of the sequence $(p_n)_{n \geq 0}$ such that:

\begin{itemize}

\item[(1)]
$f \in D_{p_{k_n}, q^{(k_n)}_{j}}$ for every $j \in \{ 0, \cdots, M(k_n) \}$ and for every $n \in \mathbb{N}$.

\item[(2)]
$\max_{j \in \{ 0, \cdots, M(k_n) \}} \sup_{z \in K}{\chi([f; p_{k_n} / q^{(k_n)}_{j}](z), h(z))} \to 0$, as $n \to + \infty$.

\end{itemize}

Moreover, the set of all functions satisfying $(1)$ \& $(2)$ is $G_{\delta}$ - dense in both spaces $(\mathbb{C}^{\mathbb{N}}, \rho_c)$ and  $(\mathbb{C}^{\mathbb{N}}, \rho_d)$.

\end{theorem}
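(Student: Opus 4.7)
The plan is to adapt the argument of Theorem \ref{theorem 4.1} to the Seleznev-type setting for formal power series, using the $G_{\delta}$-density framework established in Theorem \ref{theorem 3.13}. Enumerate $\{f_i\}_{i \geq 1}$ as rational functions with coefficients in $\mathbb{Q}+i\mathbb{Q}$, with numerator and denominator coprime; such a family is dense in $M(\mathbb{C}\setminus\{0\})$ for uniform $\chi$-convergence on compacta of $\mathbb{C}\setminus\{0\}$. Fix a standard absorbing family $\{K_m\}_{m\geq 1}$ of compact subsets of $\mathbb{C}\setminus\{0\}$ with connected complements. For each $i, m, n, s \in \mathbb{N}$ and each $j \in \{0, \ldots, M(n)\}$, set
$$ X(i, j, m, n, s) = \Big\{a \in \mathbb{C}^{\mathbb{N}} : f(z) = \sum_{k \geq 0} a_k z^k,\; f \in D_{p_n, q^{(n)}_j}(0),\; \sup_{z \in K_m} \chi\bigl([f; p_n/q^{(n)}_j]_0(z), f_i(z)\bigr) < \tfrac{1}{s}\Big\}, $$
and $X(i, m, n, s) = \bigcap_{j=0}^{M(n)} X(i, j, m, n, s)$. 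A standard verification yields that the set $\mathcal{X}$ of sequences satisfying $(1)$ and $(2)$ equals $\bigcap_{i, m, s} \bigcup_n X(i, m, n, s)$.

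Openness of each $X(i, j, m, n, s)$ in both $(\mathbb{C}^{\mathbb{N}}, \rho_c)$ and $(\mathbb{C}^{\mathbb{N}}, \rho_d)$ follows from the corresponding statements in \cite{DARAS.NESTORIDIS.PAPADIMITROPOULOS} and \cite{MAKRIDIS2}, and finite intersections preserve openness. Since $\rho_c \leq 2\rho_d$ and both spaces are complete, Baire's theorem reduces the proof to showing that each $\mathcal{X}(i, m, s) := \bigcup_n X(i, m, n, s)$ is $\rho_d$-dense.

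For density, given $a^{*} \in \mathbb{C}^{\mathbb{N}}$ and $N \in \mathbb{N}$, we seek $b \in \mathcal{X}(i, m, s)$ with $b_k = a_k^{*}$ for $k \leq N$. Set $P(z) = \sum_{k=0}^{N} a_k^{*} z^k$ and let $\mu$ denote the sum of principal parts of $f_i$ at its poles in $K_m$. Choose $r > 0$ small enough that $\overline{D}(0, r)$ is disjoint from $K_m$ and from every pole of $f_i$ outside $K_m$. Define $\tilde{f}$ on a neighborhood of $\overline{D}(0, r) \cup K_m$ by $\tilde{f} = P - \mu$ on $\overline{D}(0, r)$ and $\tilde{f} = f_i - \mu$ on $K_m$; this function is holomorphic there. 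Apply Runge's theorem on $\overline{D}(0, r) \cup K_m$ to obtain a rational $\tilde{A}/\tilde{B}$, with poles off $\overline{D}(0, r) \cup K_m$ and $\tilde{A}, \tilde{B}$ coprime, uniformly approximating $\tilde{f}$ in the Euclidean metric as finely as desired. Then $R := \mu + \tilde{A}/\tilde{B}$ is a rational function whose denominator does not vanish at $0$, and $R - f_i$ extends to a holomorphic function on a neighborhood of $K_m$ with small Euclidean sup-norm, so $R$ approximates $f_i$ on $K_m$ in $\chi$, while $R$ approximates $P$ on $\overline{D}(0, r)$ in Euclidean. By Cauchy estimates on a circle inside $\overline{D}(0, r)$, the first $N+1$ Taylor coefficients $c_0, \ldots, c_N$ of $R$ at $0$ can be made arbitrarily close to $a_0^{*}, \ldots, a_N^{*}$. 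Correcting by the polynomial $Q(z) = \sum_{k=0}^{N} (a_k^{*} - c_k) z^k$ with very small coefficients, the rational function $\tilde{R} = R + Q$ has first $N+1$ Taylor coefficients at $0$ exactly $a_k^{*}$, while $\|Q\|_{K_m}$ remains so small that $\tilde{R}$ still approximates $f_i$ on $K_m$ in $\chi$ within tolerance $\tfrac{1}{2s}$.

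Finally, write $\tilde{R} = A_0/B_0$ in coprime form with $B_0(0) \neq 0$. Choose $n$ with $p_n > \max(N, \deg A_0, \deg B_0)$ and $\min_j q^{(n)}_j > \deg B_0$, set $t = p_n - \deg B_0 > N$, and add $d z^t$ with $0 < |d|$ small enough that $U := \tilde{R} + d z^t$ is still within $\tfrac{1}{s}$ of $f_i$ on $K_m$ in $\chi$. By Proposition \ref{proposition 2.4}, $U \in D_{p_n, q^{(n)}_j}(0)$ and $[U; p_n / q^{(n)}_j]_0 \equiv U$ for every $j \in \{0, \ldots, M(n)\}$; since $t > N$, the first $N+1$ Taylor coefficients of $U$ at $0$ still equal $a_k^{*}$. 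The sequence $b$ of Taylor coefficients of $U$ thus lies in $X(i, m, n, s)$ and satisfies $\rho_d(a^{*}, b) \leq 2^{-N}$. The main obstacle is the simultaneous calibration of one rational function to (i) chordally approximate $f_i$ on $K_m$, (ii) match $N+1$ prescribed Taylor coefficients at $0$ exactly, and (iii) coincide with its own $(p_n, q^{(n)}_j)$-Padé approximants for every $j$; this is handled by the combined use of Runge on the disjoint union $\overline{D}(0, r) \cup K_m$, the polynomial correction $Q$, and the degree-lifting trick of Proposition \ref{proposition 2.4}. Baire's theorem then yields the conclusion.
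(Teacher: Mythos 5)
The paper states Theorem~\ref{theorem 4.10} without proof, indicating only that it is ``similar to the previous ones'' and strengthens \cite{DARAS.NESTORIDIS.PAPADIMITROPOULOS}; your proposal correctly fills this in by combining the Seleznev-type $\rho_d$-density framework of Theorem~\ref{theorem 3.13} (matching an initial segment of Taylor coefficients exactly) with the Type~II chordal mechanism from the proof of Theorem~\ref{theorem 4.1} (principal-part subtraction, Runge approximation, degree-lifting via $dz^t$, and Proposition~\ref{proposition 2.4}), so it is essentially the intended argument and is sound. Two small slips should be repaired: an absorbing family of compact subsets of $\mathbb{C}\setminus\{0\}$ cannot consist of sets with connected complement (annuli around $0$ must occur), though this is harmless since Runge's rational approximation on $\overline{D}(0,r)\cup K_m$ does not require it; and the constraint $p_n > \max(N, \deg A_0, \deg B_0)$ does not ensure $t = p_n - \deg B_0 > N$, so you should instead require $p_n > N + \deg B_0$ (and $p_n \geq \deg A_0$), which is available since $p_n \to +\infty$.
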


\begin{theorem} \label{theorem 4.11}

Let $(q_n)_{n \geq 0} \subseteq \mathbb{N}$ be a sequence such that $q_n \to + \infty$. Now, for every $n \in \mathbb{N}$, let $p^{(n)}_{0}, \cdots, p^{(n)}_{M(n)} \in \mathbb{N}$, where $M(n) \in \mathbb{N}$. Suppose also that $\min \{ p^{(n)}_{0}, \cdots, p^{(n)}_{M(n)} \} \to + \infty$. Then there exists a formal power series $f(z) = \sum_{n = 0}^{+ \infty} a_n z^n$ with $(a_n)_{n \geq 0} \in \mathbb{C}^{\mathbb{N}}$, satisfying the following:

For every compact set $K \subseteq \mathbb{C} \setminus \{ 0 \}$ and for every function $h(z) \in M(\mathbb{C} \setminus \{ 0 \})$, there exists a subsequence $(q_{k_n})_{n \geq 0}$ of the sequence $(q_n)_{n \geq 0}$ such that:

\begin{itemize}

\item[(1)]
$f \in D_{p^{(k_n)}_{j}, q_{k_n}}$ for every $j \in \{ 0, \cdots, M(k_n) \}$ and for every $n \in \mathbb{N}$.

\item[(2)]
$\max_{j \in \{ 0, \cdots, M(k_n) \}} \sup_{z \in K}{\chi([f; p^{(k_n)}_{j} / q_{k_n}](z), h(z))} \to 0$, as $n \to + \infty$.

\end{itemize}

Moreover, the set of all functions satisfying $(1)$ \& $(2)$ is $G_{\delta}$ - dense in in both spaces $(\mathbb{C}^{\mathbb{N}}, \rho_c)$ and  $(\mathbb{C}^{\mathbb{N}}, \rho_d)$.

\end{theorem}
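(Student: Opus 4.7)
The plan is to apply Baire's theorem in $(\mathbb{C}^{\mathbb{N}},\rho_c)$ and $(\mathbb{C}^{\mathbb{N}},\rho_d)$ simultaneously, running exactly the pattern of Theorem~\ref{theorem 4.10} with the roles of $p$ and $q$ interchanged. Let $\{f_i\}_{i\ge 1}$ be an enumeration of the rational functions with coefficients in $\mathbb{Q}+i\mathbb{Q}$ written in coprime form, and fix an absorbing family $\{K_m\}_{m\ge 1}$ of compact subsets of $\mathbb{C}\setminus\{0\}$ with connected complement. For each $i,m,n,s\in\mathbb{N}$ and $j\in\{0,\dots,M(n)\}$ I would define
\[
X(i,j,m,n,s)=\Bigl\{a\in\mathbb{C}^{\mathbb{N}}: f\in D_{p_j^{(n)},q_n}(0),\ \sup_{z\in K_m}\chi\bigl([f;p_j^{(n)}/q_n]_0(z),f_i(z)\bigr)<\tfrac{1}{s}\Bigr\},
\]
and $X(i,m,n,s)=\bigcap_{j=0}^{M(n)}X(i,j,m,n,s)$. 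A routine verification shows that the class $\mathcal{X}$ of power series satisfying $(1)$--$(2)$ equals $\bigcap_{i,m,s}\bigcup_{n}X(i,m,n,s)$.

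Openness of each $X(i,j,m,n,s)$ in $(\mathbb{C}^{\mathbb{N}},\rho_c)$ (hence also in $(\mathbb{C}^{\mathbb{N}},\rho_d)$, as $\rho_c\le 2\rho_d$) I would take from \cite{MAKRIDIS2}; so $X(i,m,n,s)$ is open as a finite intersection. The substantive step is to show that $\mathcal{U}(i,m,s)=\bigcup_n X(i,m,n,s)$ is dense in $(\mathbb{C}^{\mathbb{N}},\rho_d)$, since density then transfers to $(\mathbb{C}^{\mathbb{N}},\rho_c)$ and Baire's theorem gives the conclusion in both metrics.

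For the density step I would fix a prefix $a_0,\dots,a_{N-1}\in\mathbb{C}$ to be matched, set $P(z)=\sum_{k=0}^{N-1}a_kz^k$, and, using that $0\notin K_m$, apply Runge's theorem to produce a rational function $\widetilde S(z)$ for which $R(z)=P(z)+z^N\widetilde S(z)$ approximates $f_i$ in chordal metric on $K_m$ as closely as desired. Writing $R=A/B$ in coprime form with $B(0)=1$, I would then pick $n$ with $q_n>\deg B+N$ and $\min_j p_j^{(n)}\ge \deg A$, which is possible thanks to the hypotheses $q_n\to+\infty$ and $\min_j p_j^{(n)}\to+\infty$, and set
\[
u(z)=\frac{A(z)}{B(z)\,(1+dz^{q_n-\deg B})}
\]
for small nonzero $d\in\mathbb{C}$. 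The new denominator has degree exactly $q_n$, still equals $1$ at $0$, and shares no common zero with $A$ for generic small $d$. By Proposition~\ref{proposition 2.4}(iii) we would then have $u\in D_{p_j^{(n)},q_n}(0)$ with $[u;p_j^{(n)}/q_n]_0\equiv u$ for every $j$, while the geometric expansion $(1+dz^{q_n-\deg B})^{-1}=1-dz^{q_n-\deg B}+\cdots$ gives $u-R=O(z^{q_n-\deg B})$, so the first $N$ Taylor coefficients of $u$ still coincide with $a_0,\dots,a_{N-1}$. Finally $\sup_{z\in K_m}\chi(u,R)\to 0$ as $d\to 0$, so a sufficiently small $|d|$ places $u$ inside $X(i,m,n,s)$.

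The main obstacle is the triple balancing act in the construction of $u$: one must prescribe the low Taylor coefficients at $0$, while keeping $\deg A\le\min_j p_j^{(n)}$ and making $\deg\bigl(B\cdot(1+dz^{q_n-\deg B})\bigr)$ equal to $q_n$ exactly, so that Proposition~\ref{proposition 2.4}(iii) applies uniformly in $j$, and simultaneously preserving the chordal approximation to $f_i$ on $K_m$. The denominator-lifting device $B\mapsto B\cdot(1+dz^{q_n-\deg B})$ resolves all three at once, since it raises the denominator degree to the prescribed level, perturbs $R$ by $O(|d|)$ on $K_m$, and disturbs only Taylor coefficients of order $\ge q_n-\deg B>N$. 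Baire's theorem then concludes the argument in both $(\mathbb{C}^{\mathbb{N}},\rho_c)$ and $(\mathbb{C}^{\mathbb{N}},\rho_d)$.
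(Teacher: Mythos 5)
Your overall architecture is right and matches the route the paper indicates (it gives no proof of Theorem~\ref{theorem 4.11}, only saying it is ``similar to the previous ones''): a Baire argument with the open sets $X(i,j,m,n,s)$ from \cite{MAKRIDIS2}, and for density a rational model $R=A/B$ of exact prefix $a_0,\dots,a_{N-1}$ with denominator lifted to degree exactly $q_n$ by the factor $1+dz^{q_n-\deg B}$; this is the correct dualization of the numerator-lifting $+dz^t$ device in the proof of Theorem~\ref{theorem 4.1}, and Proposition~\ref{proposition 2.4}(ii) then gives $[u;p_j^{(n)}/q_n]_0\equiv u$ for all $j$ at once. The degree bookkeeping, the choice of small $d$, and the preservation of the first $N$ coefficients via $u-R=O(z^{q_n-\deg B})$ are all handled correctly.

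The step that is glossed over is the claim that ``Runge's theorem'' directly produces a rational $\widetilde S$, holomorphic at $0$, with $R=P+z^N\widetilde S$ chordally close to $f_i$ on $K_m$. Two points are elided. First, $f_i$ may have poles inside $K_m$, so Runge/Mergelyan cannot be applied to $f_i$ itself; one must first subtract the sum $\mu$ of its principal parts on $K_m$ (exactly as in the density argument for Theorem~\ref{theorem 4.1}), approximate $f_i-\mu$ by a polynomial on $K_m\cup\overline{B}(0,r)$, and then add $\mu$ back. Second, the approximation produced by Mergelyan matches the prescribed prefix at $0$ only approximately, whereas density in $\rho_d$ requires \emph{exact} agreement in the first $N$ coefficients; one then needs the Cauchy-estimate correction (subtract the small degree-$<N$ jet mismatch, whose sup on $K_m$ is controlled) before reducing $R$ to coprime form $A/B$ with $B(0)=1$ and holomorphic at $0$. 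Both fixes are standard and present in the paper's own proof of Theorem~\ref{theorem 4.1}, so the gap is mendable — but as written, ``apply Runge's theorem to produce $\widetilde S$'' does not by itself yield a rational function holomorphic at $0$ with the required chordal approximation property.
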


Now we present some generic results on $X^{\infty}(\Omega)$ strenghtening results from \cite{NESTORIDIS.ZADIK}.

Let $\Omega \subseteq \mathbb{C}$ be an open set. We say that a function $f \in H(\Omega)$ belongs to $A^{\infty}(\Omega)$ if for every $n \in \mathbb{N}$ the $n$th derivative $f^{(n)}$ of $f$ can be extended continuously on $\overline{\Omega}$.

In $A^{\infty}(\Omega)$ we consider the topology defined by the family of seminorms $\sup_{z \in L_k} |f^{(n)}(z)|$, where $\{ L_k \}_{k \geq 1}$ is a family of compact subsets of $\overline{\Omega}$, such that for every compact subset $L \subseteq \Omega$, there exists an index $n \equiv n(L) \in \mathbb{N}$ satisfying $L \subseteq L_n$. For instance, it suffices to consider the family $L_n = \overline{\Omega} \cap \overline{B}(0, n)$ for every $n \in \mathbb{N}$. It is known that with this topology $A^{\infty}(\Omega)$ becomes a Fr\'{e}chet space.

The space $X^{\infty}(\Omega)$ is the closure in $A^{\infty}(\Omega)$ of all rational functions with poles off $\overline{\Omega}$. It is obvious that $X^{\infty}(\Omega)$ is a complete metric space itself, as a closed subset of $A^{\infty}(\Omega)$.

\begin{theorem} \label{theorem 4.12}

Let $\Omega \subseteq \mathbb{C}$ be an open set and $L, L' \subseteq \overline{\Omega}$, $K \subseteq \mathbb{C} \setminus \overline{\Omega}$ be some compact sets. Let also $(p_n)_{n \geq 1} \subseteq \mathbb{N}$ be a sequence such that $p_n \to + \infty$. Now for every $n \geq 1$, let $q^{(n)}_{1}, \cdots, q^{(n)}_{M(n)} \in \mathbb{N}$, where $M(n) \in \mathbb{N}$ and suppose that $\min \{ q^{(n)}_{1}, \cdots, q^{(n)}_{M(n)} \} \to + \infty$.

Then there exists a function $f \in X^{\infty}(\Omega)$ such that for every rational function $h$, there exists a subsequence $(p_{k_n})_{n \geq 1}$ of the sequence $(p_n)_{n \geq 1}$ satisfying the following:  

\begin{itemize}

\item[(1)]
$f \in D_{p_{k_n}, q^{(k_n)}_{j}}(\zeta)$ for every $\zeta \in L$, for every $j \in \{ 1, \cdots, M(k_n) \}$ and for every $n \in \mathbb{N}$.

\item[(2)]
For every $l \in \mathbb{N}$, $\max_{j \in \{ 1, \cdots, M(k_n) \}} \sup_{\zeta \in L} \sup_{z \in L'} |[f; p_{k_n} / q^{(k_n)}_{j}]^{(l)}_{\zeta}(z) - f^{(l)}(z)| \to 0$ as $n \to + \infty$.

\item[(3)]
$\max_{j \in \{ 1, \cdots, M(k_n) \}} \sup_{\zeta \in L} \sup_{z \in K} \chi([f; p_{k_n} / q^{(k_n)}_{j}]_{\zeta}(z), h(z)) \to 0$ as $n \to + \infty$.

\end{itemize}

Moreover, the set of all functions $f \in X^{\infty}(\Omega)$ satisfying $(1) - (3)$ is dense and $G_{\delta}$ in $X^{\infty}(\Omega)$.

\end{theorem}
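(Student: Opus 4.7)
The plan is to follow the Baire category scheme already used in Theorems \ref{theorem 3.3} and \ref{theorem 4.1}, adapted to the Fr\'{e}chet space $X^{\infty}(\Omega)$. Fix an enumeration $\{f_i\}_{i \geq 1}$ of rational functions with coefficients in $\mathbb{Q} + i\mathbb{Q}$ whose numerator and denominator share no common zero. For each tuple $(i, s, l, n)$ and each $j \in \{1, \ldots, M(n)\}$, introduce the set $A(l, s, n, j)$ of all $f \in X^{\infty}(\Omega)$ with $f \in D_{p_n, q^{(n)}_j}(\zeta)$ for every $\zeta \in L$ and $\sup_{\zeta \in L} \sup_{z \in L'} |[f; p_n/q^{(n)}_j]^{(l)}_{\zeta}(z) - f^{(l)}(z)| < 1/s$, and the set $B(i, s, n, j)$ defined analogously but with the last inequality replaced by $\sup_{\zeta \in L} \sup_{z \in K} \chi([f; p_n/q^{(n)}_j]_{\zeta}(z), f_i(z)) < 1/s$. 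Intersecting over $j$ gives $A(l, s, n) = \bigcap_{j} A(l, s, n, j)$ and $B(i, s, n) = \bigcap_{j} B(i, s, n, j)$, and the class $\mathcal{U}$ of functions satisfying (1)--(3) equals $\bigcap_{i, s, l} \bigcup_n A(l, s, n) \cap B(i, s, n)$. The openness of each factor in $X^{\infty}(\Omega)$ has been established in \cite{NESTORIDIS3} and \cite{NESTORIDIS.ZADIK}, and is also detailed in \cite{MAKRIDIS2}; so the finite intersections over $j$ remain open and $\mathcal{U}$ is a $G_{\delta}$.

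For density, fix parameters and pick $\phi \in X^{\infty}(\Omega)$ with a basic neighborhood controlled by finitely many seminorms on a compact $L'' \subseteq \overline{\Omega}$. By Lemma \ref{lemma 3.2} choose $n_0$ so that $L \cup L' \cup L'' \subseteq L_{n_0}^o$ and so that every connected component of $\tilde{\mathbb{C}} \setminus L_{n_0}$ meets a component of $\tilde{\mathbb{C}} \setminus \overline{\Omega}$. Define the piecewise function equal to $f_i$ on $K$ and to $\phi$ on $L_{n_0}$; after subtracting the principal parts of $f_i$ at its finitely many poles on $K$, Runge's theorem approximates this function uniformly on $L_{n_0} \cup K$ in the Euclidean metric by a rational function $A_{n_0}/B_{n_0}$ with coprime numerator and denominator and with $B_{n_0}$ nowhere vanishing on $L_{n_0}$. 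Using $p_n \to +\infty$ and $\min_j q^{(n)}_j \to +\infty$, select $k_{n_0}$ with $p_{k_{n_0}} > \max(\deg A_{n_0}, \deg B_{n_0})$ and $\min_j q^{(k_{n_0})}_j > \deg B_{n_0}$, and replace $A_{n_0}/B_{n_0}$ by $A_{n_0}/B_{n_0} + dz^t$, where $t = p_{k_{n_0}} - \deg B_{n_0}$ and $d \in \mathbb{C} \setminus \{0\}$ has sufficiently small modulus. The resulting numerator has degree exactly $p_{k_{n_0}}$, is coprime to $B_{n_0}$, and by Proposition \ref{proposition 2.4} the perturbed rational function belongs to $D_{p_{k_{n_0}}, q^{(k_{n_0})}_j}(\zeta)$ for every $\zeta \in L$ and every $j$, with its own $(p_{k_{n_0}}, q^{(k_{n_0})}_j)$-Pad\'{e} approximant at $\zeta$ equal to itself. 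Finally, since every component of $\tilde{\mathbb{C}} \setminus L_{n_0}$ meets $\tilde{\mathbb{C}} \setminus \overline{\Omega}$, Runge's theorem applied once more produces a rational function $g$ with poles off $\overline{\Omega}$ (hence $g \in X^{\infty}(\Omega)$) that is close to $A_{n_0}/B_{n_0} + dz^t$ in the $X^{\infty}(\Omega)$-topology.

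The main obstacle is to check that $X^{\infty}$-closeness of $g$ to the perturbed rational function transfers into simultaneous $C^l$-closeness, uniformly over all $\zeta \in L$ and all $j$, of the Pad\'{e} approximants $[g; p_{k_{n_0}}/q^{(k_{n_0})}_j]_{\zeta}$ to $g^{(l)}$ on $L'$, and into chordal closeness of those same approximants to $f_i$ on $K$. This is handled in two steps: first, Runge approximation on an open neighborhood of $\overline{\Omega}$ automatically produces uniform convergence of all derivatives on compact subsets of $\overline{\Omega}$; second, Cauchy estimates on a fixed polydisk contained in $L_{n_0}^o$ around each $\zeta \in L$ (with radius independent of $\zeta$) turn sup-norm closeness of $g$ on $L_{n_0}$ into uniform closeness of the first $p_{k_{n_0}} + \max_j q^{(k_{n_0})}_j + 1$ Taylor coefficients of $g$ at $\zeta$. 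The Jacobi determinantal formulas of Remark \ref{remark 2.3} are continuous in these finitely many coefficients, so smallness propagates to $[g; p_{k_{n_0}}/q^{(k_{n_0})}_j]_{\zeta}$ together with all of its derivatives on $L'$ and to the chordal distance on $K$, uniformly in $\zeta$ and $j$. This places $g$ in the required basic open set, and Baire's theorem completes the proof.
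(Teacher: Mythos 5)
The paper itself states Theorem \ref{theorem 4.12} without proof (it appears in the block of $X^{\infty}(\Omega)$ results that the authors present without argument, citing \cite{NESTORIDIS.ZADIK} and the analogy with Theorems \ref{theorem 4.1}--\ref{theorem 4.6}), so there is no official proof to compare against. Your overall Baire scheme, the degree-perturbation $A_{n_0}/B_{n_0}\mapsto A_{n_0}/B_{n_0}+dz^{t}$, the appeal to Proposition \ref{proposition 2.4}, and the use of the Jacobi formulas together with Cauchy estimates to make the Pad\'{e} approximants continuous in finitely many Taylor coefficients are all the right ingredients; this is the same architecture as the proof of Theorem \ref{theorem 4.1}.

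There is, however, a genuine gap in the density argument, and it occurs exactly at the point where the $X^{\infty}(\Omega)$-topology differs from that of $H(\Omega)$. You feed the generic function $\phi\in X^{\infty}(\Omega)$ directly into the Runge/Mergelyan step on $L_{n_0}\cup K$. But $\phi$ is only holomorphic on $\Omega$, with its derivatives extending continuously to $\overline{\Omega}$; it need not be holomorphic on any open neighbourhood of $\overline{\Omega}$. Meanwhile, the basic neighbourhood you must hit is controlled by seminorms $\sup_{z\in L''}|f^{(l)}(z)|$ with $L''\subseteq\overline{\Omega}$, and $L''$ can contain boundary points of $\Omega$. Your plan to apply Lemma \ref{lemma 3.2} and choose $L_{n_0}$ with $L\cup L'\cup L''\subseteq L_{n_0}^{\circ}$ is therefore impossible: Lemma \ref{lemma 3.2} produces compact subsets of the \emph{open} set $\Omega$, so if $L''$ touches $\partial\Omega$ then $L''\not\subseteq L_{n_0}$ for any $n_0$; and if instead one enlarges $L_{n_0}$ beyond $\overline{\Omega}$ so that $L''\subset L_{n_0}^{\circ}$, then $\phi$ is no longer defined on $L_{n_0}$ and the piecewise function is meaningless. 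Consequently, uniform (Euclidean) closeness of the Runge approximant on $L_{n_0}\cup K$ does not convert via Cauchy estimates into closeness of the higher derivatives on $L''$ --- the step you describe as ``automatically produces uniform convergence of all derivatives on compact subsets of $\overline{\Omega}$'' fails, because the function being approximated is not holomorphic across $\partial\Omega$.

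The fix is the step you omitted, and it is precisely the reason the theorem is stated in $X^{\infty}(\Omega)$ rather than in $A^{\infty}(\Omega)$: by definition $X^{\infty}(\Omega)$ is the closure in $A^{\infty}(\Omega)$ of the rational functions with poles off $\overline{\Omega}$, so one should first replace $\phi$ by such a rational function $r$, close to $\phi$ in the relevant $A^{\infty}$-seminorms. Then $r$ is holomorphic on an open set $U\supseteq\overline{\Omega}$, one may choose a compact $L_{n_0}\subset U$ with $L''\subset L_{n_0}^{\circ}$ and $L_{n_0}\cap K=\emptyset$ (and such that every component of $\tilde{\mathbb{C}}\setminus L_{n_0}$ meets $\tilde{\mathbb{C}}\setminus\overline{\Omega}$), and the Runge/Mergelyan step on $L_{n_0}\cup K$, followed by the degree perturbation and the final pole-pushing Runge step, now does produce the required $C^{l}$-closeness on $L''$ via Cauchy estimates, since all the functions in play are holomorphic on the full compact $L_{n_0}$. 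With this modification your argument goes through.
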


\begin{theorem} \label{theorem 4.13}

Let $\Omega \subseteq \mathbb{C}$ be an open set and $\zeta \in \overline{\Omega}$ be a fixed point. Let also $(p_n)_{n \geq 1} \subseteq \mathbb{N}$ be a sequence such that $p_n \to + \infty$. Now for every $n \geq 1$, let $q^{(n)}_{1}, \cdots, q^{(n)}_{M(n)} \in \mathbb{N}$, where $M(n) \in \mathbb{N}$ and suppose that $\min \{ q^{(n)}_{1}, \cdots, q^{(n)}_{M(n)} \} \to + \infty$. Then there exists a function $f \in X^{\infty}(\Omega)$ such that for every rational function $h$ and for every compact set $K \subseteq \mathbb{C} \setminus \overline{\Omega}$, there exists a subsequence $(p_{k_n})_{n \geq 1}$ of the sequence $(p_n)_{n \geq 1}$ satisfying the following:

\begin{itemize}

\item[(1)]
$f \in D_{p_{k_n}, q^{(k_n)}_{j}}(\zeta)$ for every $j \in \{ 1, \cdots, M(k_n) \}$ and for every $n \in \mathbb{N}$.

\item[(2)]
For every $l \in \mathbb{N}$ and for every compact set $L \subseteq \overline{\Omega}$ it holds:
$$ \max_{j \in \{ 1, \cdots, M(k_n) \}} \sup_{z \in L} |[f; p_{k_n} / q^{(k_n)}_{j}]^{(l)}_{\zeta}(z) - f^{(l)}(z)| \to 0. $$ 

as $n \to + \infty$

\item[(3)]
$\max_{j \in \{ 1, \cdots, M(k_n) \}} \sup_{z \in K} \chi([f; p_{k_n} / q^{(k_n)}_{j}]_{\zeta}(z), h(z)) \to 0$ as $n \to + \infty$.

\end{itemize}

Moreover, the set of all functions $f \in X^{\infty}(\Omega)$ satisfying $(1) - (3)$ is dense and $G_{\delta}$ in $X^{\infty}(\Omega)$.

\end{theorem}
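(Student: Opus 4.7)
The plan is to prove this directly by Baire category in $X^\infty(\Omega)$, following the pattern of the direct proofs of Theorems \ref{theorem 4.1} and \ref{theorem 4.12} rather than invoking Theorem \ref{theorem 4.12} as a black box, since the single subsequence must work uniformly across all compact $L \subseteq \overline{\Omega}$ and all derivative orders $l$, and this is cleanest to arrange with all exhaustions baked into the Baire representation. Let $\{f_i\}_{i \geq 1}$ enumerate the rational functions with coefficients in $\mathbb{Q} + i\mathbb{Q}$; fix an absorbing family $\{K_m\}_{m \geq 1}$ of compact subsets of $\mathbb{C} \setminus \overline{\Omega}$ with connected complements and an exhausting family $\{L_k\}_{k \geq 1}$ of compact subsets of $\overline{\Omega}$. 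For $i, s, m, k, l, n, N_0 \in \mathbb{N}$ and $j \in \{1, \dots, M(n)\}$ I would define
\[
A(i, s, m, n, j) = \bigl\{ f \in X^\infty(\Omega) : f \in D_{p_n, q^{(n)}_j}(\zeta),\ \sup_{z \in K_m} \chi([f; p_n/q^{(n)}_j]_\zeta(z), f_i(z)) < 1/s \bigr\},
\]
\[
B(s, k, l, n, j) = \bigl\{ f \in X^\infty(\Omega) : f \in D_{p_n, q^{(n)}_j}(\zeta),\ \sup_{z \in L_k} |[f; p_n/q^{(n)}_j]^{(l)}_\zeta(z) - f^{(l)}(z)| < 1/s \bigr\},
\]
and
\[
\mathcal{U} = \bigcap_{i, s, m, k, N_0 \in \mathbb{N}} \bigcup_{n \geq N_0} \bigcap_{j=1}^{M(n)} \Bigl( A(i, s, m, n, j) \cap \bigcap_{l=0}^{s} B(s, k, l, n, j) \Bigr).
\]
A standard diagonal argument then shows that $\mathcal{U}$ coincides with the class of functions satisfying (1)--(3): given $f \in \mathcal{U}$ and $(h, K)$ with $K \subseteq K_{m_0}$, I would approximate $h$ by $f_{i_N}$ in $\chi$ on $K_{m_0}$ to within $1/N$ and extract indices $n_N > n_{N-1}$ witnessing the $N$-th intersection with parameters $(i_N, N, m_0, N, n_{N-1}+1)$, yielding the desired subsequence $(p_{n_N})$ that handles every compact $L \subseteq L_k \subseteq \overline{\Omega}$ and every derivative order $l$.

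The openness of $A(i, s, m, n, j)$ and $B(s, k, l, n, j)$ in the Fr\'echet space $X^\infty(\Omega)$ is inherited from the Jacobi formulas of Remark \ref{remark 2.3}: the coefficients of the Padé numerator and denominator, as well as the Hankel determinant $D_{p_n, q^{(n)}_j}(f, \zeta)$, are polynomial functions of the finitely many Taylor coefficients $f^{(r)}(\zeta)/r!$ with $r \leq p_n + q^{(n)}_j$, and each such evaluation is a continuous linear functional on $X^\infty(\Omega)$; on the open locus where the Hankel determinant is nonzero, the Padé approximant and all its derivatives depend continuously on $f$ in the Fr\'echet topology. Baire's theorem on the complete metric space $X^\infty(\Omega)$ then reduces the problem to showing that each inner union $\bigcup_{n \geq N_0} \bigcap_j [A(i, s, m, n, j) \cap \bigcap_l B(s, k, l, n, j)]$ is dense.

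Density is obtained by transposing the construction from the proof of Theorem \ref{theorem 4.1} to $X^\infty(\Omega)$. Given $g \in X^\infty(\Omega)$ and a basic neighborhood controlled on some $L_{n_0} \supseteq L_k$, I would first approximate $g$ in $X^\infty(\Omega)$ by a rational function $r$ with poles off $\overline{\Omega}$. Letting $\mu$ be the sum of the principal parts of $f_i$ at its poles inside $K_m$, the function $f_i - \mu$ is holomorphic near $K_m$ while $\mu$ and $r$ are holomorphic near $L_{n_0}$. Runge's theorem on the disjoint compact pair $L_{n_0} \cup K_m$ produces polynomials $\tilde A, \tilde B$ with $\tilde B$ nonvanishing on $L_{n_0} \cup K_m$ such that $\mu + \tilde A/\tilde B$ is uniformly close to $r$ on a neighborhood of $L_{n_0}$ and $\chi$-close to $f_i$ on $K_m$. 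Writing $\mu + \tilde A/\tilde B$ in lowest terms as $P/Q$, I would then choose $n \geq N_0$ with $p_n > \deg P$ and $\min_j q^{(n)}_j \geq \deg Q$ (possible by the hypotheses $p_n \to +\infty$ and $\min_j q^{(n)}_j \to +\infty$), set $t = p_n - \deg P$, and perturb to $u = (P + d z^t Q)/Q$ with $|d|$ small and nonzero. Proposition \ref{proposition 2.4} then ensures $u \in D_{p_n, q^{(n)}_j}(\zeta)$ and $[u; p_n/q^{(n)}_j]_\zeta \equiv u$ for every $j \in \{1, \dots, M(n)\}$; Cauchy estimates on disks inside the common holomorphy domain of $u$ propagate uniform closeness to uniform closeness of all derivatives on $L_k$, placing $u$ in every $B(s, k, l, n, j)$ and in every $A(i, s, m, n, j)$.

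The main obstacle is the simultaneous handling of all $j \in \{1, \dots, M(n)\}$ at a single $n$: the rational $u$ must lie in $D_{p_n, q^{(n)}_j}(\zeta)$ and coincide with its own Padé approximant for every such $j$, which via Proposition \ref{proposition 2.4} forces $\deg Q \leq \min_j q^{(n)}_j$. The hypothesis $\min_j q^{(n)}_j \to +\infty$ is precisely what guarantees that for $n$ large the fixed polynomial denominator $Q$ obtained from the Runge construction satisfies this uniform bound across all $j$. Once density is established, Baire's theorem delivers the dense $G_\delta$ class $\mathcal{U}$ in $X^\infty(\Omega)$, and the diagonal extraction encoded in the $N_0$ slot of the Baire representation produces, for each pair $(h, K)$, the single subsequence $(p_{k_n})$ universal in $L$ and $l$ announced in the theorem.
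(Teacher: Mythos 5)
The paper itself supplies no proof of Theorem \ref{theorem 4.13}: it is one of the $X^{\infty}(\Omega)$ statements given after Theorem \ref{theorem 4.11} with the understanding that the argument ``strengthens results from [NESTORIDIS.ZADIK]'' and runs parallel to Theorems \ref{theorem 4.1}--\ref{theorem 4.3}. So the implied route in the paper is to establish Theorem \ref{theorem 4.12} first (the analogue of Theorem \ref{theorem 4.1} on $X^{\infty}(\Omega)$, with $L,L',K$ fixed), specialize $L=\{\zeta\}$, intersect over $L'=L_k$ and $K=K_m$, and diagonalize. Your proposal goes directly, rebuilding the Baire decomposition from scratch in $X^{\infty}(\Omega)$ with an $N_0$-slot built into the $G_\delta$ representation so that the diagonal extraction and the final statement come out in one pass. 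That choice is sound, and your description of the sets $A$, $B$ and of the openness via the Jacobi/Hankel formulas matches the standard scheme.

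There is, however, a genuine gap in the density step, and it is exactly the point where $X^{\infty}(\Omega)$ differs from $H(\Omega)$. After gluing $f_i$ on $K_m$ with the rational $r$ on $L_{n_0}$, you invoke Runge and obtain $\tilde A/\tilde B$ with $\tilde B$ nonvanishing \emph{only on} $L_{n_0}\cup K_m$. The perturbed function $u=(P+dz^tQ)/Q$ is then a rational function whose poles are the zeros of $Q$. For $u$ to lie in $X^{\infty}(\Omega)$ at all (let alone be close to $g$ in that Fr\'echet topology) you need those poles to lie \emph{off all of} $\overline{\Omega}$, not merely off $L_{n_0}$. Nothing in your Runge step guarantees this: a zero of $\tilde B$ could sit in $\overline{\Omega}\setminus L_{n_0}$, and then $u\notin X^{\infty}(\Omega)$ and the seminorms $\sup_{L_k}|u^{(l)}|$ are not even defined for large $k$. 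The repair is available but must be stated: since $r$ has poles off $\overline{\Omega}$ and $\mu$ has poles in $K_m\subseteq\mathbb{C}\setminus\overline{\Omega}$, the function $w-\mu$ coincides with $r-\mu$ near $\overline{\Omega}$ and hence extends holomorphically to a full neighbourhood of $(\overline{\Omega}\cap\overline{B}(0,R))\cup K_m$ for $R$ large; applying Runge on this enlarged compact set and pushing poles into $\hat{\mathbb{C}}\setminus(\overline{\Omega}\cup K_m)$ yields $\tilde B$ nonvanishing on all of $\overline{\Omega}\cup K_m$, which is what you need. Two smaller points: the absorbing family $\{K_m\}$ should \emph{not} be required to have connected complements (a compact $K\subseteq\mathbb{C}\setminus\overline{\Omega}$ need not sit inside any such set, and Type~II universality with Runge does not need that hypothesis anyway); and in the diagonal argument, the $\chi$-approximation of $h$ by $f_{i_N}$ on $K_{m_0}$ should be justified (perturbing the coefficients of $h$ to lie in $\mathbb{Q}+i\mathbb{Q}$ moves poles continuously, giving $\chi$-uniform closeness on compacta).
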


\begin{theorem} \label{theorem 4.14}

Let $\Omega \subseteq \mathbb{C}$ be an open set and $(p_n)_{n \geq 1} \subseteq \mathbb{N}$ be a sequence such that $p_n \to + \infty$. Now for every $n \geq 1$, let $q^{(n)}_{1}, \cdots, q^{(n)}_{M(n)} \in \mathbb{N}$, where $M(n) \in \mathbb{N}$ and suppose that $\min \{ q^{(n)}_{1}, \cdots, q^{(n)}_{M(n)} \} \to + \infty$. Then there exists a function $f \in X^{\infty}(\Omega)$ satisfying the following:

For every compact set $K \subseteq \mathbb{C} \setminus \overline{\Omega}$ and for every rational function $h$, there exists a subsequence $(p_{k_n})_{n \geq 1}$ of the sequence $(p_n)_{n \geq 1}$ satisfying the following:  

\begin{itemize}

\item[(1)]
For every compact set $L \subseteq \overline{\Omega}$ there exists an index $n(L) \in \mathbb{N}$ such that $f \in D_{p_{k_n}, q^{(k_n)}_{j}}(\zeta)$ for every $\zeta \in L$, for every $j \in \{ 1, \cdots, M(k_n) \}$ and for every $n \geq n(L)$.

\item[(2)]
For every $l \in \mathbb{N}$ and for every compact set $L \subseteq \overline{\Omega}$ it holds:
$$ \max_{j \in \{ 1, \cdots, M(k_n) \}} \sup_{\zeta \in L} \sup_{z \in L} |[f; p_{k_n} / q^{(k_n)}_{j}]^{(l)}_{\zeta}(z) - f^{(l)}(z)| \to 0 $$ 

as $n \to + \infty$.

\item[(3)]
For every compact set $L \subseteq \overline{\Omega}$, it holds:
$$ \max_{j \in \{ 1, \cdots, M(k_n) \}} \sup_{\zeta \in L} \sup_{z \in K} \chi([f; p_{k_n} / q^{(k_n)}_{j}]_{\zeta}(z), h(z)) \to 0 $$ 

as $n \to + \infty$.

\end{itemize}

Moreover, the set of all functions $f \in X^{\infty}(\Omega)$ satisfying $(1) - (3)$ is dense and $G_{\delta}$ in $X^{\infty}(\Omega)$.

\end{theorem}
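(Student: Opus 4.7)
The plan is to deduce Theorem \ref{theorem 4.14} from Theorem \ref{theorem 4.12} through the same exhaustion-plus-Baire scheme used in the proofs of Theorems \ref{theorem 4.3}, \ref{theorem 4.6} and \ref{theorem 4.7}. First, I would fix an absorbing family $\{K_m\}_{m\geq 1}$ of compact subsets of $\mathbb{C}\setminus\overline{\Omega}$ with connected complements (Lemma \ref{lemma 3.1} applied to $\mathbb{C}\setminus\overline{\Omega}$) and an exhausting family $\{L_k\}_{k\geq 1}$ of $\overline{\Omega}$ (for instance $L_k=\overline{\Omega}\cap\overline{B}(0,k)$, as in the definition of the topology of $A^{\infty}(\Omega)$). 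For every pair $(k,m)\in\mathbb{N}\times\mathbb{N}$, I apply Theorem \ref{theorem 4.12} with $L=L'=L_k$ and $K=K_m$; this produces a dense $G_{\delta}$ subset $\mathcal{U}_{k,m}$ of $X^{\infty}(\Omega)$ whose members satisfy conclusions (1)--(3) of Theorem \ref{theorem 4.12} for that particular data.

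Next, I set $\mathcal{U}:=\bigcap_{k,m\in\mathbb{N}}\mathcal{U}_{k,m}$. Since $X^{\infty}(\Omega)$ is a complete metric space (as a closed subspace of the Fr\'{e}chet space $A^{\infty}(\Omega)$), Baire's theorem immediately gives that $\mathcal{U}$ is dense and $G_{\delta}$ in $X^{\infty}(\Omega)$. The proof is then reduced to verifying that every $f\in\mathcal{U}$ satisfies conclusions (1)--(3) of Theorem \ref{theorem 4.14}; the opposite inclusion into each $\mathcal{U}_{k,m}$ is trivial.

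For that inclusion, fix $f\in\mathcal{U}$, a rational function $h$, and a compact $K\subseteq\mathbb{C}\setminus\overline{\Omega}$; choose $m_0$ with $K\subseteq K_{m_0}$. Because $f\in\mathcal{U}_{k,m_0}$ for every $k$, Theorem \ref{theorem 4.12} supplies a subsequence $(p_{\lambda^{(k)}_n})_{n\geq 1}$ of $(p_n)$ realizing the three conclusions of Theorem \ref{theorem 4.12} for the triple $(L_k,L_k,K_{m_0})$ and, in the case of condition (2), simultaneously for all derivative orders $l\in\mathbb{N}$. A standard Cantor diagonalization over $k$ then yields a single subsequence $(p_{k_n})_{n\geq 1}$ of $(p_n)$ such that, for each fixed $k$, the tail $(p_{k_n})_{n\geq k}$ is a subsequence of $(p_{\lambda^{(k)}_n})$. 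Given any compact $L\subseteq\overline{\Omega}$, pick $k$ with $L\subseteq L_k$ and set $n(L):=k$; since $\sup_{\zeta\in L}\leq\sup_{\zeta\in L_k}$ and similarly in $z$, the three limits for $(L_k,L_k,K_{m_0})$ specialize to the required limits over $L$, which delivers (1)--(3) of Theorem \ref{theorem 4.14}.

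The main obstacle is exactly this diagonal step: the single extracted subsequence must depend only on the pair $(K,h)$ and simultaneously realize the uniform convergences for every compact $L\subseteq\overline{\Omega}$ and every derivative order $l\in\mathbb{N}$. This is feasible because Theorem \ref{theorem 4.12} already furnishes, for each fixed $k$, one subsequence uniform in $l$, so the diagonalization need only be carried out across the countable index $k$ enumerating the exhausting family $\{L_k\}_{k\geq 1}$.
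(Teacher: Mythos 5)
Your reduction of Theorem \ref{theorem 4.14} to Theorem \ref{theorem 4.12} via the exhaustions $\{L_k\}$, $\{K_m\}$, the intersection $\mathcal{U}=\bigcap_{k,m}\mathcal{U}_{k,m}$, and Baire's theorem is exactly the scheme the paper uses implicitly: the paper omits this proof as ``similar to the previous ones,'' referring to the pattern of Theorems \ref{theorem 4.3}, \ref{theorem 4.6}, and \ref{theorem 4.7}, which is precisely what you reproduce. One precision is needed in your diagonal step: the subsequences $(p_{\lambda^{(k)}_n})_{n\geq 1}$ that Theorem \ref{theorem 4.12} hands you for different $k$ need not be nested and can even be pairwise disjoint, so one cannot in general arrange that the tail $(p_{k_n})_{n\geq k}$ be a subsequence of $(p_{\lambda^{(k)}_n})$; the correct (and standard) move is to choose inductively a single index $p_{k_n}$ from the sequence $(p_{\lambda^{(n)}_m})_{m\geq 1}$ with $p_{k_n}>p_{k_{n-1}}$ and with all relevant errors --- over $\zeta,z\in L_n$, over derivative orders $l\leq n$, and on $K_{m_0}$ --- below $1/n$, which then yields (1)--(3) for any fixed compact $L\subseteq\overline{\Omega}$ as soon as $n$ is large enough that $L\subseteq L_n$.
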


Now we interchange the roles of $p$ and $q$.

\begin{theorem} \label{theorem 4.15}

Let $\Omega \subseteq \mathbb{C}$ be an open set and $L, L' \subseteq \overline{\Omega}$, $K \subseteq \mathbb{C} \setminus \overline{\Omega}$ be some compact sets. Let also $(q_n)_{n \geq 1} \subseteq \mathbb{N}$ be a sequence such that $q_n \to + \infty$. Now for every $n \geq 1$, let $p^{(n)}_{1}, \cdots, p^{(n)}_{M(n)} \in \mathbb{N}$, where $M(n) \in \mathbb{N}$ and suppose that $\min \{ p^{(n)}_{1}, \cdots, p^{(n)}_{M(n)} \} \to + \infty$.

Then there exists a function $f \in X^{\infty}(\Omega)$ such that for every rational function $h$, there exists a subsequence $(q_{k_n})_{n \geq 1}$ of the sequence $(q_n)_{n \geq 1}$ satisfying the following:  

\begin{itemize}

\item[(1)]
$f \in D_{p^{(k_n)}_{j}, q_{k_n}}(\zeta)$ for every $\zeta \in L$, for every $j \in \{ 1, \cdots, M(k_n) \}$ and for every $n \in \mathbb{N}$.

\item[(2)]
For every $l \in \mathbb{N}$, $\max_{j \in \{ 1, \cdots, M(k_n) \}} \sup_{\zeta \in L} \sup_{z \in L'} |[f; p^{(k_n)}_{j} / q_{k_n}]^{(l)}_{\zeta}(z) - f^{(l)}(z)| \to 0$ as $n \to + \infty$.

\item[(3)]
$\max_{j \in \{ 1, \cdots, M(k_n) \}} \sup_{\zeta \in L} \sup_{z \in K} \chi([f; p^{(k_n)}_{j} / q_{k_n}]_{\zeta}(z), h(z)) \to 0$ as $n \to + \infty$.

\end{itemize}

Moreover, the set of all functions $f \in X^{\infty}(\Omega)$ satisfying $(1) - (3)$ is dense and $G_{\delta}$ in $X^{\infty}(\Omega)$.

\end{theorem}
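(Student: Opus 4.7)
The plan is to run the Baire-category machinery used in the proof of Theorem~\ref{theorem 4.12}, with the one essential change dictated by interchanging the roles of $p$ and $q$: by Proposition~\ref{proposition 2.4}, a rational function in lowest terms with $\deg A = p_0$, $\deg B = q_0$ cannot belong to $D_{p,q}$ whenever both $p > p_0$ and $q > q_0$, and since we require $f \in D_{p^{(k_n)}_j, q_{k_n}}$ simultaneously for every $j$, we must fix the denominator degree exactly at $q_{k_n}$ while only bounding $\deg A \leq \min_j p^{(k_n)}_j$. Accordingly, the degree-raising trick from the proof of Theorem~\ref{theorem 4.1} (where one added $dz^t$ to the numerator) is replaced by a \emph{multiplicative} modification of the denominator.

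For the setup, fix an enumeration $\{f_i\}_{i \geq 1}$ of rational functions with coefficients in $\mathbb{Q}+i\mathbb{Q}$ (in lowest terms, no poles on $K$), and exhausting families $\{L_k\}$ of $\overline{\Omega}$ and $\{K_m\}$ of $\mathbb{C} \setminus \overline{\Omega}$. For each $(i,s,k,l,m,n)$ and each $j \in \{1,\ldots,M(n)\}$ introduce open sets $A(i,j,k,m,n,s)$ encoding the chordal condition (3) on $L_k \times K_m$ and $B(j,k,l,n,s)$ encoding the $l$-th-derivative condition (2) on $L_k \times L'$; openness in $X^{\infty}(\Omega)$ is inherited from the corresponding facts used in Theorems~\ref{theorem 4.12} and \ref{theorem 3.15}, which rely on \cite{NESTORIDIS.ZADIK} and \cite{MAKRIDIS2}. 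Writing $\mathcal{U} = \bigcap_{i,s,k,l,m} \bigcup_n \bigl[\bigcap_{j=1}^{M(n)} A \cap \bigcap_{j=1}^{M(n)} B\bigr]$ reduces everything to density of each inner union.

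For density, given $g \in X^{\infty}(\Omega)$, a compact $L'' \subseteq \overline{\Omega}$, an order $l_0 \in \mathbb{N}$ and $\varepsilon > 0$, first approximate $g$ in the $X^{\infty}$-seminorms of order $\leq l_0$ on $L''$ by a rational function $R$ with poles off $\overline{\Omega}$ (by definition of $X^{\infty}(\Omega)$). Enlarging $L''$ via Lemma~\ref{lemma 3.2} so that every component of the complement contains a component of $\mathbb{C} \setminus \overline{\Omega}$, subtracting the principal parts of $f_i$ at its poles in $K_m$, and applying Runge's theorem to the function equal to $R$ near $L''$ and to $f_i - \mu$ near $K_m$, produce a rational function $\tilde{A}/\tilde{B}$ in lowest terms with $\tilde{B}$ non-vanishing on a fixed open neighborhood of $L'' \cup K_m$, uniformly $\varepsilon/2$-close to the target. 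Cauchy's estimates over this open neighborhood promote the bound to closeness of all derivatives of order $\leq l_0$ on $L''$ and to a chordal bound on $K_m$. Now choose $k_{n_1}$ so large that $q_{k_{n_1}} > \deg \tilde{B}$ and $\min_j p^{(k_{n_1})}_j \geq \deg \tilde{A}$, and set
$$ u(z) = \frac{\tilde{A}(z)}{\tilde{B}(z) \bigl(1 + d\, z^{q_{k_{n_1}} - \deg \tilde{B}}\bigr)} $$
for $d \in \mathbb{C} \setminus \{0\}$ of small modulus, chosen so that $1 + dz^s$ has no zeros on a fixed neighborhood of $L'' \cup K_m$ and no common zero with $\tilde{A}$ (both conditions hold for generic small $d$).

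Then $u$ is a rational function in lowest terms with numerator degree $\leq \min_j p^{(k_{n_1})}_j$ and denominator degree exactly $q_{k_{n_1}}$, so by Proposition~\ref{proposition 2.4} one has $u \in D_{p^{(k_{n_1})}_j, q_{k_{n_1}}}(\zeta)$ and $[u; p^{(k_{n_1})}_j/q_{k_{n_1}}]_\zeta \equiv u$ for every $j$ and every $\zeta \in L$ (where the denominator is nonzero); in particular the derivative differences in condition (2) vanish identically, condition (3) reduces to the already-controlled $\chi(u, f_i)$ on $K$, and condition (1) holds by construction. Since the multiplicative correction $1/(1 + dz^s)$ converges to $1$ uniformly in $C^{\infty}$ on $L''$ as $d \to 0$, $u$ remains $X^{\infty}$-close to $\tilde{A}/\tilde{B}$, hence to $g$, on $L''$ in all seminorms of order $\leq l_0$. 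Baire's theorem then yields the dense $G_{\delta}$ class. The main difficulty lies precisely in this final balancing act: matching the denominator degree exactly to $q_{k_{n_1}}$, keeping the numerator degree below $\min_j p^{(k_{n_1})}_j$, avoiding common zeros (so that Proposition~\ref{proposition 2.4} applies uniformly in $j$), and preserving $X^{\infty}$-closeness through all $l_0$ derivatives simultaneously; multiplying the denominator by $1 + dz^s$ rather than adding $dz^t$ to the numerator is the key trick that accomplishes all three at once.
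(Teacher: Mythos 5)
Your core idea---dividing the denominator by $1 + dz^s$ with $s = q_{k_{n_1}} - \deg \tilde{B}$ to raise its degree to exactly $q_{k_{n_1}}$ while leaving the numerator degree and coprimality intact---is indeed the correct analogue of the paper's additive device, and it is precisely what the paper uses in the proof of Theorem~\ref{theorem 3.8} for the Type I interchanged-roles case; the degree bookkeeping, the choice of a small generic $d$ avoiding common zeros, the $C^\infty$-smallness of the multiplicative correction on compacta, and the invocation of Proposition~\ref{proposition 2.4} to conclude $[u;\cdot/\cdot]_\zeta \equiv u$ are all sound.

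However, the Type II (chordal) part of your density argument has a genuine gap. First, you restrict the enumeration $\{f_i\}$ to rational functions with no poles on $K$; but then $\{f_i\}$ restricted to $K$ is not $\chi$-dense among rational functions on $K$ (for instance $h(z) = 1/(z - a)$ with $a \in K$ cannot be $\chi$-approximated on $K$ by functions bounded on $K$), so the $G_\delta$ set you build does not exhaust the class in Theorem~\ref{theorem 4.15}. Second, and more importantly, even dropping that restriction, you subtract the principal parts $\mu$ of $f_i$ over $K_m$ and Runge-approximate the resulting holomorphic piecewise function by $\tilde{A}/\tilde{B}$, but you never add $\mu$ back: your final $u = \tilde{A}/\bigl(\tilde{B}(1 + dz^s)\bigr)$ is then uniformly close to the bounded function $f_i - \mu$ on $K_m$, whereas condition $(3)$ requires $\chi(u, f_i)$ to be small on $K$, which fails near the poles of $f_i$ in $K_m$. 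Following the proof of Theorem~\ref{theorem 4.1}, one should instead form $A/B := \mu + \tilde{A}/\tilde{B}$ in lowest terms (coprimality holds because the poles of $\mu$ lie in $K_m$ while the zeros of $\tilde{B}$ lie off a neighborhood of $L'' \cup K_m$), verify the $\chi$-bound on $K_m$ and the derivative bounds on $L''$ for $A/B$, then choose $k_{n_1}$ so that $q_{k_{n_1}} > \deg B$ and $\min_j p^{(k_{n_1})}_j \geq \deg A$, and only then set $u = A/\bigl(B(1 + dz^{q_{k_{n_1}} - \deg B})\bigr)$. With this correction the chordal error satisfies $\chi(u, A/B) \leq |d| \sup_{K_m}|z|^s$, which is uniformly small on $K_m$ even near poles of $A/B$, and the rest of your argument carries through.
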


\begin{theorem} \label{theorem 4.16}

Let $\Omega \subseteq \mathbb{C}$ be an open set and $\zeta \in \overline{\Omega}$ be a fixed point. Let also $(q_n)_{n \geq 1} \subseteq \mathbb{N}$ be a sequence such that $q_n \to + \infty$. Now for every $n \geq 1$, let $p^{(n)}_{1}, \cdots, p^{(n)}_{M(n)} \in \mathbb{N}$, where $M(n) \in \mathbb{N}$ and suppose that $\min \{ p^{(n)}_{1}, \cdots, p^{(n)}_{M(n)} \} \to + \infty$. Then there exists a function $f \in X^{\infty}(\Omega)$ such that for every compact set $K \subseteq \mathbb{C} \setminus \overline{\Omega}$ and for every rational function $h$, there exists a subsequence $(q_{k_n})_{n \geq 1}$ of the sequence $(q_n)_{n \geq 1}$ satisfying the following:  

\begin{itemize}

\item[(1)]
$f \in D_{p^{(k_n)}_{j}, q_{k_n}}(\zeta)$ for every $j \in \{ 1, \cdots, M(k_n) \}$ and for every $n \in \mathbb{N}$.

\item[(2)]
For every $l \in \mathbb{N}$ and for every compact set $L \subseteq \overline{\Omega}$ it holds:
$$ \max_{j \in \{ 1, \cdots, M(k_n) \}} \sup_{z \in L} |[f; p^{(k_n)}_{j} / q_{k_n}]^{(l)}_{\zeta}(z) - f^{(l)}(z)| \to 0 $$ 

as $n \to + \infty$.

\item[(3)]
$\max_{j \in \{ 1, \cdots, M(k_n) \}} \sup_{z \in K} \chi([f; p^{(k_n)}_{j} / q_{k_n}]_{\zeta}(z), h(z)) \to 0$ as $n \to + \infty$.

\end{itemize}

Moreover, the set of all functions $f \in X^{\infty}(\Omega)$ satisfying $(1) - (3)$ is dense and $G_{\delta}$ in $X^{\infty}(\Omega)$.

\end{theorem}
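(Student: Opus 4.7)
My plan is to carry out a Baire category argument in the complete metric space $X^{\infty}(\Omega)$, parallel to the density proofs of Theorems \ref{theorem 4.4} and \ref{theorem 4.15}, with $\zeta$ fixed and the compact set in $\overline{\Omega}$ on which we approximate the function and its derivatives allowed to vary through an exhausting family. Fix an absorbing family $\{K_m\}_{m \geq 1}$ of compact subsets of $\mathbb{C} \setminus \overline{\Omega}$ with connected complements (Lemma \ref{lemma 3.1}), the family $L_k = \overline{\Omega} \cap \overline{B}(0, k)$, and an enumeration $\{f_i\}_{i \geq 1}$ of rational functions with $\mathbb{Q} + i\mathbb{Q}$ coefficients whose numerators and denominators share no zero. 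For every $(i, s, k, m, l, n) \in \mathbb{N}^{6}$ and every $j \in \{1, \ldots, M(n)\}$, let $V(i, s, k, m, l, n, j)$ be the set of $f \in X^{\infty}(\Omega)$ with $f \in D_{p^{(n)}_j, q_n}(\zeta)$ and
\[
\sup_{z \in K_m} \chi\bigl([f; p^{(n)}_j / q_n]_{\zeta}(z), f_i(z)\bigr) < \tfrac{1}{s}, \quad \sup_{z \in L_k} \bigl|[f; p^{(n)}_j / q_n]^{(l)}_{\zeta}(z) - f^{(l)}(z)\bigr| < \tfrac{1}{s},
\]
and set $V(i, s, k, m, l, n) = \bigcap_{j=1}^{M(n)} V(i, s, k, m, l, n, j)$. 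A routine verification (using $\chi$-density of $\{f_i\}$ in rational functions on each $K_m$, exhaustion and absorption) identifies the class $\mathcal{U}$ of the theorem with $\bigcap_{i, s, k, m, l \in \mathbb{N}} \bigcup_{n \in \mathbb{N}} V(i, s, k, m, l, n)$.

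Each set $V(i, s, k, m, l, n, j)$ is open in $X^{\infty}(\Omega)$: the Jacobi formulas of Remark \ref{remark 2.3} give the Pad\'e entries as continuous functions of finitely many Taylor coefficients of $f$ at $\zeta$, and these, together with the derivatives $f^{(l)}$ on $L_k$, depend continuously on $f$ in the $A^{\infty}$ topology, as shown in \cite{MAKRIDIS2, NESTORIDIS3, NESTORIDIS.ZADIK}; hence $\mathcal{U}$ is $G_{\delta}$. For density of $\bigcup_{n \in \mathbb{N}} V(i, s, k, m, l, n)$, I fix $g \in X^{\infty}(\Omega)$ and a basic $A^{\infty}$-neighborhood of $g$ determined by finitely many pairs $(L_{k'}, l')$ and a tolerance $\varepsilon < 1/s$. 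First, approximate $g$ in this neighborhood by a rational function $g_0$ whose poles lie in $\mathbb{C} \setminus \overline{\Omega}$ (possible by the definition of $X^{\infty}(\Omega)$); $g_0$ is holomorphic on an open neighborhood of $\overline{\Omega}$. Choose a compact $\tilde L \subseteq \mathbb{C}$ containing $\{\zeta\} \cup L_k \cup \bigcup_{k'} L_{k'}$, disjoint from $K_m$, on which $g_0$ is holomorphic, with every connected component of $\mathbb{C} \cup \{\infty\} \setminus (\tilde L \cup K_m)$ meeting $\mathbb{C} \setminus \overline{\Omega}$. Mimicking the density step of Theorem \ref{theorem 4.4}, subtract from $f_i$ the sum $\mu$ of its principal parts on $K_m$ and apply Runge's theorem on $\tilde L \cup K_m$ to approximate the function equal to $g_0$ on $\tilde L$ and to $f_i - \mu$ on $K_m$ by a rational function $\tilde A/\tilde B$ with no common zeros and all poles in $\mathbb{C} \setminus \overline{\Omega}$; then $A/B := \mu + \tilde A/\tilde B$ has no common zeros, all poles in $\mathbb{C} \setminus \overline{\Omega}$, is Euclidean-close to $g_0$ on $\tilde L$ and $\chi$-close to $f_i$ on $K_m$. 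Using $q_n \to +\infty$ and $\min_j p^{(n)}_j \to +\infty$, pick $n$ with $q_n > \deg B$ and $\min_j p^{(n)}_j \geq \deg A$, and form a rational perturbation $R$ of exact type $(\deg A, q_n)$ with no common zeros, all poles in $\mathbb{C} \setminus \overline{\Omega}$, and uniformly close to $A/B$ on $\tilde L \cup K_m$: for instance, $R = A\bigl/\bigl(B \cdot (1 + d(z - w_0)^{q_n - \deg B})\bigr)$ for a fixed $w_0 \in \mathbb{C} \setminus \overline{\Omega}$ and $|d|$ sufficiently small that the extra zeros of the denominator lie in $\mathbb{C} \setminus \overline{\Omega}$. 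By Proposition \ref{proposition 2.4}, $R \in D_{p^{(n)}_j, q_n}(\zeta)$ and $[R; p^{(n)}_j / q_n]_{\zeta} \equiv R$ for every $j$.

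The main obstacle is ensuring that $R$ simultaneously lies in $X^{\infty}(\Omega)$, attains the desired Pad\'e type $(p^{(n)}_j, q_n)$ at $\zeta$ for all $j$, and remains in the prescribed $A^{\infty}$-neighborhood of $g$. The first point is secured because all poles of $R$ are in $\mathbb{C} \setminus \overline{\Omega}$ by construction; the second because the denominator of $R$ has exact degree $q_n$, so Proposition \ref{proposition 2.4} applies uniformly in $j$; the third because $R$ is Euclidean-close to $g_0$ on $\tilde L$, and Cauchy estimates on small discs inside $\tilde L$ centered at points of each $L_{k'}$ convert this into $A^{\infty}$-closeness in each of the finitely many seminorms $\sup_{L_{k'}} |\cdot^{(l')}|$, which combined with the $A^{\infty}$-closeness of $g_0$ to $g$ places $R$ in the chosen neighborhood. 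Furthermore, $[R; p^{(n)}_j / q_n]_{\zeta} \equiv R$ makes the $L_k$-sup in the definition of $V$ vanish, and the $K_m$-chordal bound follows from $\chi$-closeness of $R$ to $f_i$ on $K_m$ (via $\chi \leq |\cdot|$ and the Runge step applied to $A/B - \mu$). Baire's theorem in $X^{\infty}(\Omega)$ then yields $G_{\delta}$ density of $\mathcal{U}$; finally, for $f \in \mathcal{U}$ and given $K$ and a rational function $h$, pick $m$ with $K \subseteq K_m$ and, for each $s$, $i_s$ with $\sup_{K_m} \chi(f_{i_s}, h) < 1/(2s)$, and use a diagonal extraction over $(k, l, s) \in \mathbb{N}^{3}$ (exploiting that the density argument above furnishes infinitely many valid $n$ for each parameter tuple) to produce the strictly increasing subsequence $(q_{k_n})$ satisfying $(1)$--$(3)$.
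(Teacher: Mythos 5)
The paper itself states Theorem \ref{theorem 4.16} without proof, so your proposal cannot be compared to the paper's argument; I can only assess it on its own merits. Your Baire-category framework in $X^{\infty}(\Omega)$ with the sets $V(i,s,k,m,l,n,j)$, the openness step, and the diagonal extraction at the end are the expected shape of the argument and match the pattern of Theorem \ref{theorem 4.1}.

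However, there is a genuine gap in the density step, precisely at the point where the $q$-boosting version differs from the $p$-boosting argument of Theorem \ref{theorem 4.1}. In Theorem \ref{theorem 4.1}, the numerator degree is raised to $p_{k_{n_0}}$ by adding $dz^t$, which creates \emph{no new poles}. Here, to invoke Proposition \ref{proposition 2.4}(ii) uniformly in $j$, you need the denominator degree to equal $q_n$ \emph{exactly}, i.e.\ you must introduce $q_n - \deg B$ new poles, and for $R$ to lie in $X^{\infty}(\Omega)$ these new poles must sit in $\mathbb{C} \setminus \overline{\Omega}$. Your specific choice $R = A\bigl/\bigl(B \cdot (1 + d(z - w_0)^{q_n - \deg B})\bigr)$ does \emph{not} achieve this: the zeros of $1 + d(z - w_0)^M$ are the points $w_0 + (-1/d)^{1/M}\,\omega$ with $\omega^M = 1$, which for $|d|$ small lie on a circle of radius $|d|^{-1/M}$ around $w_0$, far from $w_0$ and spread over all directions. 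So ``$|d|$ sufficiently small'' sends these zeros toward $\infty$, not into $\mathbb{C} \setminus \overline{\Omega}$; when $\overline{\Omega}$ is unbounded (a half-plane, a slit plane, the complement of a bounded set), some of them will fall inside $\overline{\Omega}$ and $R$ will fail to be holomorphic on $\overline{\Omega}$, let alone lie in $X^{\infty}(\Omega)$. Worse, when $\mathbb{C} \setminus \overline{\Omega}$ is bounded, no degree-$M$ polynomial $Q$ can have all its zeros in $\mathbb{C} \setminus \overline{\Omega}$ while satisfying $\sup_{\tilde L \cup K_m} |Q - 1| < \epsilon$ for small $\epsilon$ (a uniform estimate $|Q - 1| < \epsilon$ forces the zeros to distance at least of order $\epsilon^{-1/M}$ from $\tilde L \cup K_m$, hence outside the bounded set $\mathbb{C} \setminus \overline{\Omega}$), so the ``multiply the denominator by a polynomial close to $1$'' idea cannot be repaired by tweaking the perturbing factor.

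Thus the step ``form a rational perturbation $R$ of exact type $(\deg A, q_n)$ with \ldots all poles in $\mathbb{C} \setminus \overline{\Omega}$'' is the missing idea, not a routine modification of Theorem \ref{theorem 4.1}. A correct argument has to boost the denominator degree differently — for instance by modifying the Runge step itself so that the rational approximant $\tilde A / \tilde B$ is produced with a pole of precisely the right total order at prescribed points of $\mathbb{C} \setminus \overline{\Omega}$ (away from the Cauchy-estimate neighborhoods of the $L_{k'}$), and then checking that the resulting numerator degree still stays below $\min_j p_j^{(n)}$, which is not automatic since you do not control the relative growth rates of $q_n$ and $\min_j p_j^{(n)}$. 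Until that is resolved, the density claim is not established.

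A secondary remark: when you write ``Mimicking the density step of Theorem \ref{theorem 4.4}'' you should note that Theorem \ref{theorem 4.4} is itself stated in the paper without proof, so there is no density step there to mimic; the model argument you are actually adapting is that of Theorem \ref{theorem 4.1}, where $p$ is boosted and $q$ is not, which is why the translation is non-trivial.
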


\begin{theorem} \label{theorem 4.17}

Let $\Omega \subseteq \mathbb{C}$ be an open set and $(q_n)_{n \geq 1} \subseteq \mathbb{N}$ be a sequence such that $q_n \to + \infty$. Now for every $n \geq 1$, let $p^{(n)}_{1}, \cdots, p^{(n)}_{M(n)} \in \mathbb{N}$, where $M(n) \in \mathbb{N}$ and suppose that $\min \{ p^{(n)}_{1}, \cdots, p^{(n)}_{M(n)} \} \to + \infty$. Then there exists a function $f \in X^{\infty}(\Omega)$ satisfying the following:

For every compact set $K \subseteq \mathbb{C} \setminus \overline{\Omega}$ and for every rational function $h$, there exists a subsequence $(q_{k_n})_{n \geq 1}$ of the sequence $(q_n)_{n \geq 1}$ satisfying the following:  

\begin{itemize}

\item[(1)]
For every compact set $L \subseteq \overline{\Omega}$ there exists an index $n(L) \in \mathbb{N}$ such that $f \in D_{p^{(k_n)}_{j}, q_{k_n}}(\zeta)$ for every $\zeta \in L$, for every $j \in \{ 1, \cdots, M(k_n) \}$ and for every $n \geq n(L)$.

\item[(2)]
For every $l \in \mathbb{N}$ and for every compact set $L \subseteq \overline{\Omega}$ it holds:
$$ \max_{j \in \{ 1, \cdots, M(k_n) \}} \sup_{\zeta \in L} \sup_{z \in L} |[f; p^{(k_n)}_{j} / q_{k_n}]^{(l)}_{\zeta}(z) - f^{(l)}(z)| \to 0 $$ 

as $n \to + \infty$.

\item[(3)]
For every compact set $L \subseteq \overline{\Omega}$, it holds:
$$ \max_{j \in \{ 1, \cdots, M(k_n) \}} \sup_{\zeta \in L} \sup_{z \in K} \chi([f; p^{(k_n)}_{j} / q_{k_n}]_{\zeta}(z), h(z)) \to 0 $$ 

as $n \to + \infty$.

\end{itemize}

Moreover, the set of all functions $f \in X^{\infty}(\Omega)$ satisfying $(1) - (3)$ is dense and $G_{\delta}$ in $X^{\infty}(\Omega)$.

\end{theorem}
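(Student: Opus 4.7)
The plan is to mirror the Baire-plus-diagonal argument used for Theorem \ref{theorem 4.14} (and its Type I analogue Theorem \ref{theorem 4.3}), reducing Theorem \ref{theorem 4.17} to Theorem \ref{theorem 4.15}. Concretely, I would fix an exhausting family $\{L_k\}_{k \geq 1}$ of compact subsets of $\overline{\Omega}$ of the type described in Section 3 (for instance $L_k = \overline{\Omega} \cap \overline{B}(0,k)$), together with the family $\{K_m\}_{m \geq 1}$ of Lemma \ref{lemma 3.1} applied to the domain $\mathbb{C} \setminus \overline{\Omega}$. For each pair $(k,m) \in \mathbb{N} \times \mathbb{N}$ I would invoke Theorem \ref{theorem 4.15} with $L = L' = L_k$ and $K = K_m$, obtaining a class $\mathcal{U}_{k,m} \subseteq X^{\infty}(\Omega)$ that is dense and $G_\delta$.

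Next I would set $\mathcal{U} = \bigcap_{k,m \in \mathbb{N}} \mathcal{U}_{k,m}$. Since $X^{\infty}(\Omega)$ is a complete metric space (closed subspace of the Fr\'echet space $A^{\infty}(\Omega)$), Baire's theorem renders $\mathcal{U}$ dense and $G_\delta$. It then remains to identify $\mathcal{U}$ with the class of functions satisfying the conclusion of Theorem \ref{theorem 4.17}. The inclusion of Theorem \ref{theorem 4.17}'s class into $\mathcal{U}$ is immediate: for each $(k,m)$, specialize the ``for every compact $L$'' formulation to $L = L_k$ and $K = K_m$, discard the first $n(L_k) - 1$ terms of the resulting subsequence, and re-index to obtain the ``for every $n$'' formulation of Theorem \ref{theorem 4.15} on $(L_k, L_k, K_m)$.

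The reverse inclusion is the step that requires attention. Given $f \in \mathcal{U}$, a rational function $h$, and a compact set $K \subseteq \mathbb{C} \setminus \overline{\Omega}$, I would first pick $m_0$ with $K \subseteq K_{m_0}$. For every $k$, membership $f \in \mathcal{U}_{k,m_0}$ provides a subsequence $\sigma^{(k)}$ of $(q_n)_{n \geq 1}$ witnessing all three conclusions of Theorem \ref{theorem 4.15} on $(L_k, L_k, K_{m_0})$ for the chosen $h$. I would then carry out a standard diagonal extraction: at stage $k$, select $n_k \in \sigma^{(k)}(\mathbb{N})$ with $n_k > n_{k-1}$ and so large that, simultaneously for all $l \leq k$ and all $j \in \{1, \dots, M(n_k)\}$, the quantity
$$ \sup_{\zeta \in L_k} \sup_{z \in L_k} \bigl|[f; p^{(n_k)}_{j} / q_{n_k}]^{(l)}_{\zeta}(z) - f^{(l)}(z)\bigr| < \frac{1}{k} $$
holds, the analogous chordal deviation from $h$ on $K_{m_0}$ is less than $1/k$, and $f \in D_{p^{(n_k)}_j, q_{n_k}}(\zeta)$ for every $\zeta \in L_k$.

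The expected obstacle is precisely organizing this diagonal so that a single subsequence $(q_{n_k})$ serves all compact $L \subseteq \overline{\Omega}$ at once. This succeeds because any such $L$ lies inside some $L_{k_0}$, and for $k \geq k_0$ the monotonicity $\sup_{\zeta \in L}(\,\cdot\,) \leq \sup_{\zeta \in L_{k_0}}(\,\cdot\,) \leq \sup_{\zeta \in L_k}(\,\cdot\,)$ automatically transfers all required convergences from $L_k$ down to $L$, while condition (1) of Theorem \ref{theorem 4.17} is met with $n(L) := k_0$. Baire's theorem then delivers both the density and the $G_\delta$ structure claimed in the statement.
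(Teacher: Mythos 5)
Your proposal is correct and follows exactly the route the paper intends: the paper omits the proof of Theorem \ref{theorem 4.17}, noting it is "similar to the previous ones," and those previous proofs (Theorems \ref{theorem 4.3} and \ref{theorem 4.6}) use precisely this scheme of applying the fixed-compact-set theorem (here Theorem \ref{theorem 4.15}) with $L = L' = L_k$, $K = K_m$, forming the intersection $\mathcal{U} = \bigcap_{k,m}\mathcal{U}_{k,m}$, and identifying it with the desired class via a diagonal argument before invoking Baire. (You even correct the paper's typographical $\bigcup$ to $\bigcap$ in the identification of $\mathcal{U}$; the only small inaccuracy is that the absorbing family for compacts $K \subseteq \mathbb{C}\setminus\overline{\Omega}$ should be taken from Lemma \ref{lemma 3.2} applied to the open set $\mathbb{C}\setminus\overline{\Omega}$ rather than from Lemma \ref{lemma 3.1}, since no connected-complement restriction on $K$ is imposed in the Type II statements.)
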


\section{\sloppy{Affine and algebraic genericities of two classes of functions}} 

In this section we deal with the class of functions on a simply connected domain $\Omega \subseteq \mathbb{C}$ which satisfy the requirements of Theorem \ref{theorem 1.1} in the Introduction for a fixed center of expansion $\zeta \in \Omega$: the class $\mathcal{A}$.

We construct a particular function $f$ in the above class. Our construction is based on the observation that $[f; p / q]_{\zeta}(z) \equiv S_{p}(f, \zeta)(z)$ with $q \geq 1$ if and only if $a_{p + 1} = a_{p + 2} = \cdots = a_{p + q} = 0$, where $f(z) = \sum_{n = 0}^{+ \infty} a_n (z - \zeta)^n$ is the Taylor expansion of the function $f$, centered at $\zeta \in \Omega$.

\begin{theorem} \label{theorem 5.1} 
Let $\Omega \subseteq \mathbb{C}$ be a simply connected domain and $\zeta \in \Omega$ be a fixed point. Let also $(p_n)_{n \geq 1} \subseteq \mathbb{N}$ be a sequence such that $p_n \to + \infty$. Also, for every $n \in \mathbb{N}$, let $N(n) \in \mathbb{N}$ and $q_{1}^{(n)}, \cdots, q_{N(n)}^{(n)} \in \mathbb{N}$.

Then there exists a function $f \in H(\Omega)$, $f(z) = \sum_{n = 0}^{+ \infty} a_n (z - \zeta)^n$ satisfying the following: 

For every compact set $K \subseteq \mathbb{C} \setminus \Omega$ with connected complement and for every function $h \in A(K)$, there exists a subsequence $(p_{k_n})_{n \geq 1}$ of the sequence $(p_n)_{n \geq 1}$ such that:

\begin{itemize}

\item[(1)]
$\sup_{z \in K} |S_{p_{k_n}}(f, \zeta)(z) - h(z)| \to 0$, as $n \to + \infty$.

\item[(2)]
$\sup_{z \in J} |S_{p_{k_n}}(f, \zeta)(z) - f(z)| \to 0$, as $n \to + \infty$ for every compact set $J \subseteq \Omega$.

\end{itemize}

Furthermore, for every $n \in \mathbb{N}$ it holds $a_{p_{k_n}} \neq 0$  and $a_{p_{k_n} + s} = 0$ for every $s = 1, \cdots, \max\{ q_{1}^{(k_n)}, \cdots, q_{N(k_n)}^{(k_n)}\}$.

\end{theorem}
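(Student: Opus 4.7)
The plan is to construct $f$ explicitly as a series $f=\sum_\nu P_\nu$ of polynomials in $(z-\zeta)$ whose coefficient supports are pairwise disjoint intervals of integers, separated by gaps of size $Q_\nu:=\max\{q_1^{(k_\nu)},\ldots,q_{N(k_\nu)}^{(k_\nu)}\}$, so that the prescribed vanishing of the Taylor coefficients holds automatically. This is an adaptation of the classical Nestoridis construction of a universal Taylor series: the observation recorded just before the statement says that in order for $[f;p_{k_\nu}/q^{(k_\nu)}_j]_\zeta$ to collapse to the partial sum $S_{p_{k_\nu}}(f,\zeta)$ it suffices to force $a_{p_{k_\nu}+1}=\cdots=a_{p_{k_\nu}+Q_\nu}=0$, which the disjoint-block structure will enforce.

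I would fix enumerations $(f_i)_i$ of polynomials with coefficients in $\mathbb{Q}+i\mathbb{Q}$, $(K_m)_m$ from Lemma \ref{lemma 3.1}, and $(L_k)_k$ from Lemma \ref{lemma 3.2}, together with an enumeration $(i_\nu,m_\nu)_\nu$ of $\mathbb{N}^2$ in which every pair occurs infinitely often. The inductive construction would produce the $P_\nu$ so that, writing $S_\nu=P_1+\cdots+P_\nu$ and $\ell_\nu:=p_{k_{\nu-1}}+Q_{\nu-1}+1$ (with $\ell_1:=0$ and $k_0:=0$), we have $P_\nu(z)=(z-\zeta)^{\ell_\nu}R_\nu(z)$ with $\deg P_\nu=p_{k_\nu}$ for some $k_\nu>k_{\nu-1}$ chosen from $(p_n)$, nonzero leading coefficient, $\sup_{K_{m_\nu}}|S_\nu-f_{i_\nu}|<1/\nu$ and $\sup_{L_\nu}|P_\nu|<2^{-\nu}$. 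The inductive step applies Mergelyan's theorem to the compact set $K_{m_\nu}\cup L_\nu$, which has connected complement by simple connectedness of $\Omega$ (the same argument used in the proof of Theorem \ref{theorem 3.3}), and to the function equal to $(f_{i_\nu}-S_{\nu-1})/(z-\zeta)^{\ell_\nu}$ on $K_{m_\nu}$ and to $0$ on $L_\nu$ (well-defined since $\zeta\notin K_{m_\nu}$). This yields a polynomial $R_\nu^0$ of some degree $D_\nu$; I would then pick $k_\nu$ large enough that $p_{k_\nu}>\ell_\nu+D_\nu$ and set $R_\nu:=R_\nu^0+c_\nu(z-\zeta)^{p_{k_\nu}-\ell_\nu}$ for a small nonzero $c_\nu$, so as to fix the top degree and a nonzero leading coefficient.

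Setting $f:=\sum_\nu P_\nu$, the bound $\sup_{L_\nu}|P_\nu|<2^{-\nu}$ and the exhausting property of $(L_k)$ give convergence in $H(\Omega)$. The coefficient supports $[\ell_\mu,p_{k_\mu}]$ of the $P_\mu$ are pairwise disjoint, so for each $\nu$ we read off directly $a_{p_{k_\nu}}=c_\nu\neq 0$, $a_{p_{k_\nu}+s}=0$ for $s=1,\ldots,Q_\nu$, and $S_{p_{k_\nu}}(f,\zeta)=S_\nu$. The universality claims then follow by the standard density argument: for any $K$ and $h\in A(K)$, take $m$ with $K\subseteq K_m$ (Lemma \ref{lemma 3.1}) and $f_i$ Mergelyan-close to $h$ on $K_m$; the infinitely many $\nu$ with $(i_\nu,m_\nu)=(i,m)$ then supply a subsequence along which $S_{p_{k_\nu}}(f,\zeta)\to h$ uniformly on $K$, while the tail bound $|f-S_{p_{k_\nu}}(f,\zeta)|\leq\sum_{\mu>\nu}2^{-\mu}$ on any compact $J\subseteq\Omega$ (valid once $J\subseteq L_\nu$) yields the second assertion.

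The main obstacle I foresee is the coordination of $\ell_\nu$, $D_\nu$, and $p_{k_\nu}$ at each stage. The Mergelyan step produces $R_\nu^0$ of a degree $D_\nu$ that is not controlled in advance, and multiplication by $(z-\zeta)^{\ell_\nu}$ can amplify approximation errors by the factor $\sup_{K_{m_\nu}\cup L_\nu}|z-\zeta|^{\ell_\nu}$, which may be large. The crucial point is that $\ell_\nu$ is determined entirely by stage-$(\nu-1)$ data, so the Mergelyan error tolerance and the perturbation $|c_\nu|$ can both be chosen small enough to absorb this amplification and to give the target bounds $1/\nu$ and $2^{-\nu}$; the remainder is routine bookkeeping but requires a careful ordering of the choices within each inductive step.
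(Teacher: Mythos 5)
Your proposal is correct and follows essentially the same route as the paper's own proof: an inductive Mergelyan construction of polynomial blocks in $(z-\zeta)$ with pairwise disjoint coefficient supports, where the gaps between consecutive blocks are sized by $\max\{q_1^{(k_\nu)},\ldots,q_{N(k_\nu)}^{(k_\nu)}\}$ so that the required vanishing $a_{p_{k_\nu}+1}=\cdots=a_{p_{k_\nu}+Q_\nu}=0$ holds by construction, a small perturbation term $c_\nu(z-\zeta)^{p_{k_\nu}}$ guaranteeing $a_{p_{k_\nu}}\neq 0$, and the universality read off from an enumeration in which each pair $(K_m,f_i)$ recurs infinitely often. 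Your bookkeeping (choosing the Mergelyan tolerance and $|c_\nu|$ at stage $\nu$ after $\ell_\nu$ is fixed, to beat the amplification factor $\sup|z-\zeta|^{\ell_\nu}$) is exactly the point the paper handles the same way, and in fact your indexing $Q_\nu=\max\{q_j^{(k_\nu)}\}$ matches the theorem's statement more cleanly than the paper's text, which writes the superscript as $(n)$ rather than $(k_n)$ in the choice of $t_n$.
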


\begin{proof} Let $\{ f_j \}_{j \geq 1}$ be an enumeration of polynomials with coefficients of $\mathbb{Q} + i\mathbb{Q}$. Let also $\{ K_m \}_{m \geq 1}$ and $\{ L_k \}_{k \geq 1}$ be two families of compact subsetes of $\mathbb{C}$ satisfying Lemmas \ref{lemma 3.1} and \ref{lemma 3.2} respectively. The set $\{ (K_m, f_j) : m, j \geq 1 \}$ is infinite denumerable and thus we consider a function $t: \mathbb{N} \to \mathbb{N}$ such that $\{ (K_m, f_j) : m, j \geq 1 \} = \{ (K_{m_t}, f_{j_t}) : t \geq 1 \}$, where we suppose that each pair $(K_{m_t}, f_{j_t})$ appears infinitely many times. In any other case, see Remark \ref{remark 5.2}.

\begin{itemize}

\item[\textbf{Step 1}.]

We consider the function $w_1(z) = 
\begin{cases} f_{j_1}(z) &\mbox{if } \; z \in K_{m_1} \\
0 &\mbox{if } \; z \in L_{1} \end{cases}$.

We notice that $w_1 \in A(K_{m_1} \cup L_{1})$. We apply Mergelyan's theorem and we find a polynomial $h_1$ such that $\sup_{z \in K_{m_1} \cup L_{1}} |w_1(z) - h_1(z)| < 1$. We consider an index $k_1 \in \mathbb{N}$ such that $deg(h_1(z)) < p_{k_1}$. Next, we select a $c_1 \in \mathbb{C} \setminus \{ 0 \}$ such that:
$$ \sup_{z \in K_{m_1} \cup L_{1}} |w_1(z) - (h_1(z) + c_1(z - \zeta)^{p_{k_1}})| < 1. $$

Note that such a choice is possible. We set $H_1(z) = h_1(z) + c_1(z - \zeta)^{p_{k_1}}$. Clearly the function $H_1$ is a polynomial with $deg(H_1(z)) = p_{k_1}$. Finally, we select $t_1 \geq 1 + \max\{ q_{1}^{(1)}, \cdots, q_{N(1)}^{(1)}\}$.

\item[\textbf{Step 2}.]

We consider the function $w_2(z) = 
\begin{cases} \frac{f_{j_2}(z) - H_1(z)}{(z - \zeta)^{p_{k_1} + t_1}} &\mbox{if } \; z \in K_{m_2} \\
0 &\mbox{if } \; z \in L_{2} \end{cases}$

We notice that $w_1 \in A(K_{m_2} \cup L_{2})$. We apply Mergelyan's theorem and we find a polynomial $h_2$ such that:
$$ \sup_{z \in K_{m_2} \cup L_{2}} |w_2(z) - h_2(z)| < \frac{1}{2^2} \cdot \frac{1}{\max_{z \in K_{m_2} \cup L_{2}} |z - \zeta|^{p_{k_1} + t_1} + 1}. $$ 

We consider an index $k_2 \in \mathbb{N}$ such that $deg(h_2(z) \cdot (z - \zeta)^{p_{k_1} + t_1}) < p_{k_2}$. Next, we select a $c_2 \in \mathbb{C} \setminus \{ 0 \}$ such that:
$$ \sup_{z \in K_{m_2} \cup L_{2}} |w_2(z) - ((z - \zeta)^{p_{k_1} + t_1} h_2(z) + c_2(z - \zeta)^{p_{k_2}}| < \frac{1}{2^2}. $$

Note that such a choice is possible. We set $H_2(z) = (z - \zeta)^{p_{k_1} + t_1} h_2(z) + c_2(z - \zeta)^{p_{k_2}}$. Clearly the function $H_2$ is a polynomial with $deg(H_2(z)) = p_{k_2}$. Finally, we select $t_2 \geq 1 + \max\{ q_{1}^{(2)}, \cdots, q_{N(2)}^{(2)}\}$.

\item[\textbf{Step n}.] So far we have defined the polynomials $H_1, \cdots, H_{n - 1}$ with $deg(H_i(z)) = p_{k_i}$ for every $i = 1, \cdots, n - 1$. We consider the function: 
$$ w_n(z) = 
\begin{cases} \frac{f_{j_n}(z) - (H_{1}(z) + \sum_{N = 1}^{n - 2} (z - \zeta)^{p_{k_N} + t_N} H_{N + 1})}{(z - \zeta)^{p_{k_{n - 1}} + t_{n - 1}}} &\mbox{if } \; z \in K_{m_n} \\
0 &\mbox{if } \; z \in L_{n} \end{cases}. $$

We notice that $w_n \in A(K_{m_n} \cup L_{n})$. We apply Mergelyan's theorem and we find a polynomial $h_n$ such that:
$$ \sup_{z \in K_{m_{n}} \cup L_{n}} |w_n(z) - h_n(z)| < \frac{1}{n^2} \cdot \frac{1}{\max_{z \in K_{m_n} \cup L_{n}} |z - \zeta|^{p_{k_{n - 1}} + t_{n - 1}} + 1}. $$ 

We consider an index $k_n \in \mathbb{N}$ such that:
$$ deg((z - \zeta)^{p_{k_{n - 1}} + t_{n - 1}} \cdot h_n(z)) < p_{k_n}. $$

Next, we select a $c_n \in \mathbb{C} \setminus \{ 0 \}$ such that:
$$ \sup_{z \in K_{m_n} \cup L_{n}} |w_n(z) - ((z - \zeta)^{p_{k_{n - 1}}} h_n(z) + c_n(z - \zeta)^{p_{k_n}})| < \frac{1}{n^2}. $$

Note that such a choice is possible. We set $H_n(z) = (z - \zeta)^{p_{k_{n - 1}}} h_n(z) + c_n(z - \zeta)^{p_{k_n}}$. Clearly the function $H_n$ is a polynomial with $deg(H_n(z)) = p_{k_n}$. 

\end{itemize}

From Weierstrass's theorem, the sequence of polynomials:
$$ \{ H_{1}(z) + H_{2}(z) + \cdots + H_{n}(z) + \cdots \}_{n \geq 2} $$ 

converges uniformly on every compact subset $J \subseteq \Omega$ to a function $f \in H(\Omega)$. We notice that from Cauchy's integral formula, for every $n \in \mathbb{N}$ it holds:
$$ S_{p_{k_n}}(f, \zeta)(z) = H_{1}(z) + \sum_{N = 1}^{p_{k_n} - 1} H_{N + 1}(z). $$

We will show that the function $f$ meets the requirements of Theorem \ref{theorem 5.1}. 

\begin{itemize}

\item[$\bullet$]

Let $J \subseteq \Omega$ be a compact set. We consider an index $k \in \mathbb{N}$ such that $J \subseteq L_k$. Then:
$$ \sup_{z \in L_k} |S_{p_{k_n}}(f, \zeta)(z) - f(z)| \leq \sum_{s = M(n)}^{+ \infty} \frac{1}{s^2} \to 0 $$

as $n \to + \infty$, where $M(n) \in \mathbb{N}$ and $M(n) \to +\infty$ as $n \to + \infty$ as well.

\item[$\bullet$]

Let $K \subseteq \mathbb{C} \setminus \Omega$ be a compact set with connected complement, $h \in A(K)$ and $\varepsilon > 0$. We consider an index $m \in \mathbb{N}$ such that $K \subseteq K_m$. We use Mergelyan's theorem in order to find a polynomial $f_j$ such that $|| h - f_j ||_{K} < \frac{\varepsilon}{3}$. Then, according to our initial hypothesis, $(K_m, f_j) = (K_{m_t}, f_{j_t})$ for infinitely many $t \in \mathbb{N}$. Also, according to the construction of $f$, it holds $|| f_{j_t} - S_{p_{k_{j_t}}}(f, \zeta) ||_{K_{m_t}} < \frac{1}{t^2}$ for infinitely many $t \in \mathbb{N}$. Equivalently, $|| f_{j} - S_{p_{k_{j_t}}}(f, \zeta) ||_{K_{m}} < \frac{1}{t^2}$ for infinitely many $t \in \mathbb{N}$. For $t \to + \infty$, we select a $t_0 \in \mathbb{N}$ such that $|| f_{j} - S_{p_{k_{j_{t_0}}}}(f, \zeta) ||_{K_{m}} < \frac{1}{{t_0}^2} < \frac{\varepsilon}{3}$. The triangle inequality yields now the result.

\end{itemize}

\end{proof}

\begin{remark} \label{remark 5.2} We notice that in the previous proof we can also use an enumeration $(K_{m_t}, f_{j_t}), t \in \mathbb{N}$ where every pair $(K_m, f_j)$ appears only once. This alters only the last part of the proof, where we consider a compact set $K \subseteq \mathbb{C} \setminus \Omega$ with connected complement, a function $h \in A(K)$ and we want to approximate uniformly on $K$ the function $h$ by a subsequence of the partial sums $\{ S_{n}(f, \zeta) \}_{n \geq 1}$ of the function $f$ we have already constructed.

Let $\varepsilon > 0$. We consider an index $m \in \mathbb{N}$ such that $K \subseteq K_m$. We use Mergelyan's theorem in order to find a polynomial $f_j$ such that $|| h - f_j ||_{K} < \frac{\varepsilon}{3}$. Suppose that $(K_{m_t}, f_{j_t}) = (K_m, f_j)$ only once. According to the construction of the function $f$, it holds $|| f_{j_t} - S_{p_{k_{j_t}}}(f, \zeta) ||_{K_{m_t}} < \frac{1}{t^2}$ for each such $t \in \mathbb{N}$. We notice that for every $q \in (0, \frac{\varepsilon}{3}) \cap \mathbb{Q}$ the function $f_j + q$ is a polynomial with coefficients in $\mathbb{Q} + i \mathbb{Q}$, thus there exists an index $j(q) \in \mathbb{N}$ such that $f_j + q \equiv f_{j(q)}$. Also, for $q, q' \in (0, \frac{\varepsilon}{3}) \cap \mathbb{Q}$ with $q \neq q'$ it holds $j(q) \neq j(q')$. In other words, there exist infinitely many indexes $i \in \mathbb{N}$ such that $|| f_j - f_i ||_{K_m} < \frac{\varepsilon}{3}$. That allows us to find an index  $j_0 \in \mathbb{N}$ large enough such that $|| f_j - f_{j_0} ||_{K_m} < \frac{\varepsilon}{3}$ and also $\frac{1}{j^{2}_t} < \frac{\varepsilon}{3}$. Thus $|| S_{p_{k_{j_t}}}(f, \zeta) - f_{j_t} ||_{K_m} < \frac{1}{j^{2}_t} < \frac{\varepsilon}{3}$. The result follows now from the triangle inequality. See also \cite{MELAS.NESTORIDIS.PAPADOPERAKIS}.

We mention that the above construction is a modification of the construction of universal Taylor series without use of Baire's theorem (\cite{CHUI.PARNES} \cite{LUH} \cite{NESTORIDIS1} \cite{MOUZE.NESTORIDIS.PAPADOPERAKIS.TSIRIVAS}).

\end{remark}

\begin{definition} \label{definition 5.3}

We denote as $\mathcal{B} \equiv \mathcal{B}(\mathcal{A}^{(1)})$ the set of all functions satisfying Theorem \ref{theorem 5.1}. 

\end{definition}

\begin{proposition} \label{proposition 5.4}

Let $f \in \mathcal{B}(\mathcal{A}^{(1)})$ and $P$ be a polynomial. Then $f + P \in \mathcal{B}(\mathcal{A}^{(1)}) \subseteq \mathcal{A}$. Thus, the class $\mathcal{A}$ contains an affine dense subspace of $H(\Omega)$. It follows that $\mathcal{A}$ is affinely generic.

\end{proposition}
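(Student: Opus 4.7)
The plan is to prove the proposition in three steps: first that $\mathcal{B}$ is stable under the addition of a polynomial, second that $\mathcal{B}\subseteq\mathcal{A}$, and finally that for any fixed $f\in\mathcal{B}$ the set $f+\mathbb{C}[z]$ is a dense affine subspace of $H(\Omega)$.

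For the first step, fix $f\in\mathcal{B}$ with Taylor expansion $\sum a_n(z-\zeta)^n$ and a polynomial $P$ with Taylor expansion $\sum b_n(z-\zeta)^n$ at $\zeta$ (so $b_n=0$ for $n>\deg P$). Given a compact set $K\subseteq\mathbb{C}\setminus\Omega$ with connected complement and $h\in A(K)$, I apply the defining property of $\mathcal{B}$ to $f$ with the \emph{modified} target $h-P\in A(K)$ and obtain a subsequence $(p_{k_n})$ for which $S_{p_{k_n}}(f,\zeta)\to h-P$ uniformly on $K$, $S_{p_{k_n}}(f,\zeta)\to f$ uniformly on compacta of $\Omega$, together with the coefficient conditions $a_{p_{k_n}}\neq 0$ and $a_{p_{k_n}+s}=0$ for $s=1,\dots,\max_j q^{(k_n)}_j$. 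Passing to the tail where $p_{k_n}>\deg P$ (possible since $p_n\to+\infty$), the identity $S_N(f+P,\zeta)=S_N(f,\zeta)+P$ holds for every $N\geq\deg P$, so conditions (1) and (2) of Theorem \ref{theorem 5.1} for $f+P$ follow by adding $P$ to both sides of each convergence. The required coefficient conditions for $f+P$ reduce to those already known for $f$, because $a_n+b_n=a_n$ for every $n>\deg P$.

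For the second step, I invoke the observation highlighted just before Theorem \ref{theorem 5.1}: both membership in $D_{p,q}(\zeta)$ and the value of $[f;p/q]_\zeta$ depend only on the Taylor coefficients of $f$ at $\zeta$ up to order $p+q$. If $a_p\neq 0$ and $a_{p+1}=\cdots=a_{p+q}=0$, these finitely many coefficients coincide with those of the polynomial $S_p(f,\zeta)$, which has exact degree $p$. By Proposition \ref{proposition 2.4} applied to this polynomial viewed as a rational function with denominator $1$, we have $S_p(f,\zeta)\in D_{p,q}(\zeta)$ with $[S_p(f,\zeta);p/q]_\zeta\equiv S_p(f,\zeta)$; transporting this via the coefficient coincidence yields $f\in D_{p,q}(\zeta)$ and $[f;p/q]_\zeta\equiv S_p(f,\zeta)$. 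Applying this with $(p,q)=(p_{k_n},q^{(k_n)}_j)$ for each $j\in\{1,\dots,N(k_n)\}$ and each $n$, conditions (ii) and (iii) of Theorem \ref{theorem 1.1} reduce, uniformly in the selection $\sigma$, to $\sup_{z\in K}|S_{p_{k_n}}(f,\zeta)(z)-h(z)|\to 0$ and $\sup_{z\in J}|S_{p_{k_n}}(f,\zeta)(z)-f(z)|\to 0$, which are precisely conclusions (1) and (2) of Theorem \ref{theorem 5.1}. Hence $\mathcal{B}\subseteq\mathcal{A}$.

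For the third step, Theorem \ref{theorem 5.1} guarantees the existence of some $f_0\in\mathcal{B}$. The set $f_0+\mathbb{C}[z]$ is an affine subspace of $H(\Omega)$, being a translate of the linear subspace of polynomials. Since $\Omega$ is simply connected, Runge's theorem implies that $\mathbb{C}[z]$ is dense in $H(\Omega)$ in the topology of uniform convergence on compacta, and translation by $f_0$ is a homeomorphism, so $f_0+\mathbb{C}[z]$ is dense in $H(\Omega)$. Combined with steps one and two, $f_0+\mathbb{C}[z]\subseteq\mathcal{B}\subseteq\mathcal{A}$, and the affine genericity of $\mathcal{A}$ follows. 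I do not anticipate a serious obstacle: the heart of the argument is the coefficient-vanishing observation identifying $[f;p/q]_\zeta$ with $S_p(f,\zeta)$, and once this is in hand the rest is routine bookkeeping on subsequences and degrees. The only point requiring care is the passage to the tail with $p_{k_n}>\deg P$, which is immediate from $p_n\to+\infty$.
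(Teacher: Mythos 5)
Your proof is correct and follows essentially the same approach as the paper's, which rests on the same key observation (stated just before Theorem \ref{theorem 5.1} and invoked via Proposition \ref{proposition 2.4}): if $a_p\neq 0$ and $a_{p+1}=\cdots=a_{p+q}=0$, then $f\in D_{p,q}(\zeta)$ and $[f;p/q]_\zeta\equiv S_p(f,\zeta)$, so $\mathcal{B}\subseteq\mathcal{A}$. The paper dispatches the polynomial-stability and density steps with a single ``this is obvious,'' whereas you spell them out (modified target $h-P$, passing to the tail $p_{k_n}>\deg P$, translation plus Runge for density), which is sound and usefully explicit but not a different argument.
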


\begin{proof} \sloppy{This is obvious, according to the construction of the class} $\mathcal{B}(\mathcal{A}^{(1)}) \subseteq \mathcal{A}$, since the condition $a_{p_{k_n} + s} = 0$ for every $s = 1, \cdots, \max\{ q_{1}^{(k_n)}, \cdots, q_{N(k_n)}^{(k_n)}\}$ implies that $[f; p_{k_n} / q_{j}^{(k_n)}]_{\zeta} \equiv S_{p_{k_n}}(f, \zeta)$ for every $j = 1, \cdots, M(k_n)$. Further, the above equations combined with $a_{p_{k_n}} \neq 0$ imply that $f \in D_{p_{k_n}, q_{j}^{(k_n)}}(\zeta)$ for every $j = 1, \cdots, M(k_n)$.

\end{proof}

Next we consider a finite of infinite denumerable family of systems:

$$ \mathcal{A}^{(l)} = ((p^{(l)}_n)_{n \geq 1}, N(l, n), q^{(n)}_{l, i} \; \text{for} \; i = 1, \cdots, N(l, n)), l \in I $$

where $I = \mathbb{N}$ or $I$ is finite. As expected, for every $l \in I$ it holds $(p^{(l)}_n)_{n \geq 1} \subseteq \mathbb{N}$, $p^{(l)}_n \to + \infty$ as $n \to + \infty$, $N(l, n) \in \mathbb{N}$ for every $n \in \mathbb{N}$, $q^{(n)}_{l, i} \in \mathbb{N}$ for every $i = 1, \cdots, N(l, n)$ and $\max \{ q^{(n)}_{l, 1}, \cdots, q^{(n)}_{l, N(l, n)} \} \to + \infty$ as $n \to + \infty$.

Each system defines a new class of functions, namely the class $\mathcal{B}(\mathcal{A}^{(l)})$. This is done in a similar way to the class $\mathcal{B}(\mathcal{A}^{(1)})$ we constructed in Theorem \ref{theorem 5.1}.

We will now show that the class $\cap_{l \in I} \mathcal{B}(\mathcal{A}^{(l)})$ is a dense subset of $H(\Omega)$, for $I$ a finite or an infinite denumerable set.

\begin{theorem} \label{theorem 5.5} Let $I \neq \emptyset$ be a finite or infinite denumerable set. We consider a family of systems $\{ \mathcal{A}^{(l)} \}_{l \in I}$ as above. Then the class $\cap_{l \in I} \mathcal{B}(\mathcal{A}^{(l)})$ is a dense subset of $H(\Omega)$ and every function $f \in \cap_{l \in I} \mathcal{B}(\mathcal{A}^{(l)})$, $f(z) = \sum_{n = 0}^{+ \infty} a_n (z - \zeta)^n$ satisfies the following:

For every compact set $K \subseteq \mathbb{C} \setminus \Omega$ with connected complement, for every function $h \in A(K)$ and for every $l \in I$, there exists a subsequence $(p^{(l)}_{k_n(l)})_{n \geq 1}$ of the sequence $(p^{(l)}_n)_{n \geq 1}$ such that:

\begin{itemize}

\item[(1)]
$\sup_{z \in K} |S_{p^{(l)}_{k_n(l)}}(f, \zeta)(z) - h(z)| \to 0$, as $n \to + \infty$.

\item[(2)]
$\sup_{z \in J} |S_{p^{(l)}_{k_n(l)}}(f, \zeta)(z) - f(z)| \to 0$, as $n \to + \infty$ for every compact set $J \subseteq \Omega$.

\end{itemize}

Furthermore, for every $l \in I$ and for every $n \in \mathbb{N}$ it holds $a_{p^{(l)}_{k_n(l)}} \neq 0$  and $a_{p^{(l)}_{k_n(l)} + s} = 0$ for every $s = 1, \cdots, \max\{ q^{(k_n(l))}_{l, 1}, \cdots, q^{(k_n(l))}_{l, N(l, k_n(l))}\}$.

\end{theorem}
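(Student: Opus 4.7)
The plan is to first construct a single explicit function $f \in \bigcap_{l \in I} \mathcal{B}(\mathcal{A}^{(l)})$ by modifying the construction of Theorem \ref{theorem 5.1}, and then use the translation trick of Proposition \ref{proposition 5.4} to promote one such function to a dense set. The driving observation remains the same: if a Taylor coefficient $a_{p}$ is nonzero while $a_{p+1} = \cdots = a_{p+q} = 0$, then $[f;p/q]_\zeta \equiv S_{p}(f,\zeta)$ and $f \in D_{p, q}(\zeta)$, so it is enough to control a single sequence of Taylor coefficients of $f$ that simultaneously encodes the required pattern for every system $\mathcal{A}^{(l)}$.

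Since $I$ is at most countable, the set $\{(K_m, f_j, l) : m, j \in \mathbb{N}, \; l \in I\}$ is countable, so I enumerate it as a sequence $(K_{m_t}, f_{j_t}, l_t)_{t \geq 1}$ in which each triple occurs infinitely often. I then repeat the inductive construction of Theorem \ref{theorem 5.1} stage by stage, but at stage $t$ I work with the system $\mathcal{A}^{(l_t)}$: after Mergelyan-approximating the appropriate function on $K_{m_t} \cup L_t$ by a polynomial $h_t$, I select an index $k_t(l_t) \in \mathbb{N}$ so that $p^{(l_t)}_{k_t(l_t)}$ strictly exceeds both the degrees of all previously constructed blocks and the degree of $(z-\zeta)^{p_{k_{t-1}}(l_{t-1}) + t_{t-1}} h_t$, choose a small nonzero $c_t$ and set $H_t(z) = (z-\zeta)^{p_{k_{t-1}}(l_{t-1}) + t_{t-1}}h_t(z) + c_t(z-\zeta)^{p^{(l_t)}_{k_t(l_t)}}$, and then pick the gap $t_t \geq 1 + \max\{q^{(k_t(l_t))}_{l_t,1}, \ldots, q^{(k_t(l_t))}_{l_t, N(l_t, k_t(l_t))}\}$ so that the following block begins past index $p^{(l_t)}_{k_t(l_t)} + t_t$. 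Summing the blocks gives, by Weierstrass, a function $f \in H(\Omega)$ whose Taylor expansion has nonzero coefficient exactly at every $p^{(l_t)}_{k_t(l_t)}$ and zero coefficients in the prescribed gap. The verifications of properties (1) and (2) then proceed verbatim as in Theorem \ref{theorem 5.1}, applied separately to each fixed $l \in I$, because the sub-enumeration of stages $t$ with $l_t = l$ is infinite and exhausts every pair $(K_m, f_j)$ infinitely often.

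For density, I repeat the reasoning of Proposition \ref{proposition 5.4}: if $f \in \bigcap_{l \in I} \mathcal{B}(\mathcal{A}^{(l)})$ and $P$ is any polynomial, then $f + P$ also lies in the intersection. Indeed, the Taylor coefficients of $f + P$ agree with those of $f$ beyond $\deg P$, so the nonzero/zero pattern at each $p^{(l)}_{k_n(l)}$ is preserved as soon as $p^{(l)}_{k_n(l)} > \deg P$, and $S_{p^{(l)}_{k_n(l)}}(f+P,\zeta) = S_{p^{(l)}_{k_n(l)}}(f,\zeta) + P$, so property (1) for $f+P$ with target $h$ follows from property (1) for $f$ with target $h - P \in A(K)$, and similarly for (2). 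Since polynomials are dense in $H(\Omega)$, the affine set $\{f + P : P \text{ polynomial}\}$ is a dense subset of $H(\Omega)$ contained in $\bigcap_{l \in I} \mathcal{B}(\mathcal{A}^{(l)})$.

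The main obstacle is the bookkeeping in the case $I = \mathbb{N}$: one must arrange the enumeration so that, for each $l$, the sub-enumeration of stages $t$ with $l_t = l$ visits every pair $(K_m, f_j)$ infinitely often, and simultaneously one must guarantee that the selected indices $k_t(l_t)$ form an increasing sub-enumeration of $(p^{(l)}_n)_{n \geq 1}$ for each $l$. Both requirements are met by always choosing, at stage $t$, an index $k_t(l_t)$ strictly larger than the largest index previously used for the same system $l_t$, which is possible because $p^{(l)}_n \to +\infty$ for every $l$; no further analytic difficulty arises beyond what is already handled in Theorem \ref{theorem 5.1}.
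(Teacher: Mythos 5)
Your proof is correct and follows essentially the same route as the paper: interleave the single-system construction of Theorem \ref{theorem 5.1} across all systems by a diagonal/enumeration scheme that visits each system $\mathcal{A}^{(l)}$ infinitely often and then obtain density from the translation invariance $f + P \in \cap_{l} \mathcal{B}(\mathcal{A}^{(l)})$ for polynomials $P$, exactly as in Proposition \ref{proposition 5.4} and Proposition \ref{proposition 5.6}. Your bookkeeping via a single enumeration of triples $(K_m, f_j, l)$ with each triple recurring infinitely often is arguably cleaner than the paper's step-by-step tables, but it implements the same diagonal idea.
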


\begin{proof} The proof is based on the one of Theorem \ref{theorem 5.1}. We examine each one of the two cases seperately.

\begin{itemize}

\item[(1)] The set $I$ is finite. In this case, without loss of the generality, we suppose that it holds $I = \{ 1, 2, \cdots, N \}$, for an index $N \in \mathbb{N}$. 

In each step of the proof we repeat the same arguments as we did in the one of Theorem \ref{theorem 5.1}, but this time the specific argument is used for a different sequence. Refer to the following table for details.

\bigskip

\begin{tabular}{ c | c c c c}
& Selection & Selection & $\cdots$ & Selection \\
Sequence & $(p^{(1)}_n)_{n \geq 1}$ & $(p^{(2)}_n)_{n \geq 1}$ &  $\cdots$ & $(p^{(N)}_n)_{n \geq 1}$ \\ \hline    
Step $1$ & $\bullet$ & - & $\cdots$ & - \\
Step $2$ & - & $\bullet$ & $\cdots$ & - \\
$\vdots$  & $\vdots$  & $\vdots$ & $\ddots$ & $\vdots$ \\
Step $N$ & - & - & $\cdots$ & $\bullet$ \\
Step $N + 1$ & $\bullet$ & - & $\cdots$ & - \\
Step $N + 2$ & - & $\bullet$ & $\cdots$ & - \\
$\vdots$  & $\vdots$  & $\vdots$ & $\ddots$ & $\vdots$ \\
Step $2N$ & - & - & $\cdots$ & $\bullet$ \\
$\vdots$  & $\vdots$  & $\vdots$ & $\cdots$ & $\vdots$ \\
\end{tabular}

\bigskip

The class $\cap_{l \in I} \mathcal{B}(\mathcal{A}^{(l)})$ is proved to be a dense subset of $H(\Omega)$ in the same way. In addition, every function $f \in \cap_{l \in I} \mathcal{B}(\mathcal{A}^{(l)})$ meets the requirements of Theorem \ref{theorem 5.5} in the same way as in Theorem \ref{theorem 5.1}.

\item[(2)] The set $I$ is infinite denumerable. In this case, without loss of the generality, we suppose that it holds $I = \mathbb{N}$. 

In each step of the proof we repeat the same arguments as we did in the one of Theorem \ref{theorem 5.1}, but this time the specific argument is used for a different sequence. Refer to the following table for details.

\bigskip

\begin{tabular}{ c | c c c c c c}
& Selection & Selection & Selection & $\cdots$ & Selection & $\cdots$ \\
Sequence & $(p^{(1)}_n)_{n \geq 1}$ & $(p^{(2)}_n)_{n \geq 1}$ & $(p^{(3)}_n)_{n \geq 1}$ & $\cdots$ & $(p^{(N)}_n)_{n \geq 1}$ & $\cdots$ \\ \hline    
Step $1$ & $\bullet$ & - & - & $\cdots$ & - & $\cdots$ \\
Step $2$ & $\bullet$ & $\bullet$ & - & $\cdots$ & - & $\cdots$ \\
$\vdots$  & $\vdots$ & $\vdots$ & $\vdots$ & $\ddots$ & $\vdots$ & $\cdots$ \\
Step $N$ & $\bullet$ & $\bullet$ & $\cdots$ & $\cdots$ & $\bullet$ & $\cdots$ \\
Step $N + 1$ & $\bullet$ & $\bullet$ & $\bullet$ & $\cdots$ & $\bullet$ & $\bullet$ \\
$\vdots$  & $\vdots$  & $\vdots$ & $\cdots$ & $\vdots$ & $\vdots$ & $\cdots$ \\
\end{tabular}

\bigskip

The class $\cap_{l \in I} \mathcal{B}(\mathcal{A}^{(l)})$ is proved to be a dense subset of $H(\Omega)$ in the same way. In addition, every function $f \in \cap_{l \in I} \mathcal{B}(\mathcal{A}^{(l)})$ meets the requirements of Theorem \ref{theorem 5.5} in the same way as in Theorem \ref{theorem 5.1}.

\end{itemize}

\end{proof}

\begin{proposition} \label{proposition 5.6}

Let $I \neq \emptyset$ be a finite or infinite denumerable set, $\{ \mathcal{A}^{(l)} \}_{l \in I}$ systems as above, $g \in \cap_{l \in I} \mathcal{B}(\mathcal{A}^{(l)})$ and $P$ be a polynomial. Then the function $g + P \in \cap_{l \in I} \mathcal{B}(\mathcal{A}^{(l)})$. It follows that the class $\cap_{l \in I} \mathcal{B}(\mathcal{A}^{(l)})$ is a dense subset of $H(\Omega)$.

\end{proposition}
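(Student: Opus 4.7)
The plan is to verify $g+P \in \bigcap_{l \in I} \mathcal{B}(\mathcal{A}^{(l)})$ directly from the characterization of this class established in Theorem \ref{theorem 5.5}, and then deduce the density statement by exhibiting a dense affine subspace contained in the intersection. Write $g(z) = \sum_{n=0}^{+\infty} a_n (z-\zeta)^n$ and $P(z) = \sum_{n=0}^{d} b_n (z-\zeta)^n$ with $d = \deg P$. The single observation driving everything is: for every integer $N > d$ one has $S_N(g+P, \zeta) = S_N(g, \zeta) + P$, while the Taylor coefficients of $g+P$ at indices $n > d$ coincide with those of $g$.

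Fix $K \subseteq \mathbb{C} \setminus \Omega$ compact with connected complement, $h \in A(K)$, and $l \in I$. Since $P$ is a polynomial, $h - P \in A(K)$; applying the membership of $g$ in $\mathcal{B}(\mathcal{A}^{(l)})$ to the pair $(K, h-P)$ yields a subsequence $(p^{(l)}_{k_n(l)})_{n \geq 1}$ of $(p^{(l)}_n)$ satisfying all requirements listed in Theorem \ref{theorem 5.5}. Because $p^{(l)}_{k_n(l)} \to +\infty$, we discard an initial segment and relabel so that $p^{(l)}_{k_n(l)} > d$ for every $n \geq 1$; the result is still a subsequence of $(p^{(l)}_n)$. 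For such $n$ we have $S_{p^{(l)}_{k_n(l)}}(g+P, \zeta) = S_{p^{(l)}_{k_n(l)}}(g, \zeta) + P$, which converges uniformly on $K$ to $(h-P)+P = h$ and uniformly on each compact $J \subseteq \Omega$ to $g+P$; conditions (1) and (2) of Theorem \ref{theorem 5.5} thus hold for $g+P$. The coefficient conditions $a_{p^{(l)}_{k_n(l)}} \neq 0$ and $a_{p^{(l)}_{k_n(l)}+s} = 0$ transfer verbatim to $g+P$, because every index involved exceeds $d$ and so the perturbation coefficient $b_\bullet$ vanishes there.

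For the density statement, fix any $g \in \bigcap_{l \in I} \mathcal{B}(\mathcal{A}^{(l)})$ provided by Theorem \ref{theorem 5.5} and let $\mathcal{P} \subseteq H(\Omega)$ denote the linear subspace of polynomials. The preceding argument gives $g + \mathcal{P} \subseteq \bigcap_{l \in I} \mathcal{B}(\mathcal{A}^{(l)})$. Because $\Omega$ is simply connected, Runge's theorem ensures that $\mathcal{P}$ is dense in $H(\Omega)$ in the topology of uniform convergence on compacta, so the affine translate $g + \mathcal{P}$ is dense in $H(\Omega)$; in particular $\bigcap_{l \in I} \mathcal{B}(\mathcal{A}^{(l)})$ is dense and, exactly as in Proposition \ref{proposition 5.4}, contains a dense affine subspace, confirming its affine genericity.

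No serious obstacle is anticipated: the argument is a straightforward bookkeeping exercise exploiting the fact that a polynomial perturbation affects only finitely many Taylor coefficients and only the first $d+1$ partial sums in a non-trivial way. The one point requiring care is the relabeling of the subsequence so that the coefficient conditions, which are required to hold for every index $n$, remain valid; this is handled by simply discarding the finitely many initial indices below $d$.
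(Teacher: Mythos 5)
Your argument is correct and is precisely the reasoning the paper has in mind; the paper's entire proof of Proposition~\ref{proposition 5.6} is the single sentence ``the proof is obvious according to the definition of the classes $\mathcal{B}(\mathcal{A}^{(l)})$,'' so you have supplied the details that were left implicit. The two observations that do the work --- that $S_N(g+P,\zeta)=S_N(g,\zeta)+P$ once $N>\deg P$, and that adding a polynomial leaves all Taylor coefficients of index greater than $\deg P$ unchanged --- together with replacing the target $h$ by $h-P$ and discarding the finitely many indices $p^{(l)}_{k_n(l)}\le\deg P$, is exactly what makes the statement true, and the density via a dense translate $g+\mathcal{P}$ is the same argument as in Proposition~\ref{proposition 5.4}.
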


\begin{proof} The proof is obvious according to the definition of the classes $\mathcal{B}(\mathcal{A}^{(l)})$, $l \in I$.

\end{proof}

We give now the definition of a new class of functions, namely the class $\mathcal{A}'$, which is larger than the class $\mathcal{A}$; i.e. $\mathcal{A} \subseteq \mathcal{A}'$.

\begin{definition} \label{definition 5.7}

Let $(p_n)_{n \in \mathbb{N}} \subseteq \mathbb{N}$ with $p_n \to + \infty$. For every $n \in \mathbb{N}$, let $q^{(n)}_1, \cdots, q^{(n)}_{N(n)} \in \mathbb{N}$, where $N(n) \in \mathbb{N}$. Let $\Omega \subseteq \mathbb{C}$ be a simply connected domain and $\zeta \in \Omega$ be a fixed element. Then there exists a holomorphic function $f \in H(\Omega)$ with Taylor expansion $f(z) = \sum_{n = 0}^{+ \infty} \frac{f^{(n)}(\zeta)}{n!} (z - \zeta)^{n}$ such that for every polynomial $h$ and for every compact set $K \subseteq \mathbb{C} \setminus \Omega$ with connected complement there exists a subsequence $(p_{k_n})_{n \in \mathbb{N}}$ of the sequence $(p_n)_{n \in \mathbb{N}}$ so that the following hold:

\begin{itemize}

\item[(i)]
The Pad\'{e} approximant $[f; p_{k_n} / q^{(k_n)}_{\sigma(k_n)}]_{\zeta}$ exists for every $n \in \mathbb{N}$ and for every selection $\sigma: \mathbb{N} \to \mathbb{N}$ satisfying $\sigma(k_n) \in \{ 1, \cdots, N(k_n) \}$.

\item[(ii)]
$\sup_{z \in K} |[f; p_{k_n} / q^{(k_n)}_{\sigma(k_n)}]_{\zeta}(z) - h(z)| \to 0$ for every selection $\sigma: \mathbb{N} \to \mathbb{N}$ satisfying $\sigma(k_n) \in \{ 1, \cdots, N(k_n) \}$.

\item[(iii)]
$\sup_{z \in J} |[f; p_{k_n} / q^{(k_n)}_{\sigma(k_n)}]_{\zeta}(z) - f(z)|$ for every compact set $J \subseteq \Omega$ and for every selection $\sigma: \mathbb{N} \to \mathbb{N}$ satisfying $\sigma(k_n) \in \{ 1, \cdots, N(k_n) \}$.

\end{itemize}

The class $\mathcal{A}'$ is defined as the set of all functions satisfying Definition \ref{definition 5.7}. In addition it holds $\mathcal{A} \subseteq \mathcal{A}'$, according to the definition of the class $\mathcal{A}$.

\end{definition}

The main result of this section is that there exists a dense subspace $V$ of $H(\Omega)$ such that $V \setminus \{ 0 \} \subseteq \mathcal{A}'$. This implies that the class $\mathcal{A}'$ is algebraically generic in $H(\Omega)$ and strengthens a result of \cite{CHARPENTIER.NESTORIDIS.WIELONSKY}.

\begin{theorem} \label{theorem 5.8}

There exists a dense vector subspace $V$ of $H(\Omega)$ such that $V \setminus \{ 0 \} \subseteq \mathcal{A}'$.

\end{theorem}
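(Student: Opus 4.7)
The plan is to construct a countable family $\{f_n\}_{n\ge 1}\subseteq H(\Omega)$ by an iterated Mergelyan procedure in the spirit of Theorem \ref{theorem 5.1} and Remark \ref{remark 5.2}, and to set $V=\mathrm{span}_{\mathbb{C}}\{f_n:n\ge 1\}$. The chief subtlety is that polynomials do not lie in $\mathcal{A}'$ (a fixed polynomial cannot approximate arbitrary polynomials $h$), so the family must be arranged so that no nontrivial finite combination of the $f_n$'s collapses to a polynomial and every such combination retains the universal Pad\'e approximation property of Definition \ref{definition 5.7}.

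The construction proceeds along a fair enumeration of quadruples $(K_{m_t}, f_{j_t}, \ell_t, \nu_t)$, where $\{K_m\}$ and $\{L_k\}$ are the exhausting families of Lemmas \ref{lemma 3.1} and \ref{lemma 3.2}, $\{f_j\}$ enumerates polynomials with coefficients in $\mathbb{Q}+i\mathbb{Q}$, $\{g_\ell\}$ is a fixed dense sequence in $H(\Omega)$, and $\nu_t\le t$ selects the ``active'' function at stage $t$ (each quadruple appearing infinitely often). Each $f_n$ is built as a convergent series $f_n=\sum_{t\ge n} H_t^{(n)}$ of polynomial increments, with $f_t$ introduced at stage $t$. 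At stage $t$, Mergelyan's theorem supplies $H_t^{(\nu_t)}$ that approximates $f_{j_t}$ on $K_{m_t}$ and $g_{\ell_t}$ on $L_{n_t}$, and contributes a nonzero leading monomial $c_t(z-\zeta)^{p_{k_t}}$ along the prescribed sequence $(p_n)$. The remaining increments $H_t^{(m)}$, $m\ne\nu_t$, are chosen of degree strictly below $p_{k_t}$ and uniformly small on the appropriate compacta. A triangular structure is enforced by insisting: no increment puts a nonzero coefficient at any earlier marker position $p_{k_s}$ of a function $f_m$ with $m\ne\nu_s$, and for every $m$ and every $t$ the Taylor coefficients of $f_m$ at $p_{k_t}+1,\dots,p_{k_t}+Q_t$ vanish, where $Q_t=\max_j q_j^{(k_t)}$. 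Demanding in addition $\|f_\ell-g_\ell\|_{L_\ell}<1/\ell$ at stage $\ell$ makes $V$ dense in $H(\Omega)$.

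For any $F=\sum_{m=1}^N c_m f_m\in V\setminus\{0\}$, pick $m^*$ with $c_{m^*}\ne 0$. Along the infinitely many stages $t$ with $\nu_t=m^*$, the Taylor coefficient of $F$ at $p_{k_t}$ reduces to $c_{m^*}c_t\ne 0$, while the coefficients at $p_{k_t}+1,\dots,p_{k_t}+Q_t$ all vanish. By the observation opening Section~5, this gap pattern guarantees that $[F;p_{k_t}/q_j^{(k_t)}]_\zeta$ exists and equals $S_{p_{k_t}}(F,\zeta)$ for every $j\in\{1,\dots,N(k_t)\}$, which is condition (i) of Definition \ref{definition 5.7}. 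Conditions (ii) and (iii) then reduce to universal Taylor approximation of $F$: they follow from the fair-enumeration argument of Remark \ref{remark 5.2}, provided that at stage $t$ the active approximation step aims at the shifted target $f_{j_t}-\sum_{m\ne m^*}c_m S_{p_{k_t}}(f_m,\zeta)$ rather than $f_{j_t}$ itself.

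The main obstacle is exactly this last point: the ``passive'' partial sums $S_{p_{k_t}}(f_m,\zeta)$ for $m\ne m^*$ on $K\subseteq\mathbb{C}\setminus\Omega$ need not converge to $f_m$ off $\Omega$, and the shifted target depends on the arbitrary complex coefficients $c_m$. I would handle this by enriching the enumeration so that, for every polynomial $f_j$, every choice of $m^*$ and every vector of rational coefficients $(c_m)_{m\ne m^*}$, and every partial ``state'' determined by finitely many previously chosen increments, some later stage aims at the corresponding shifted target. The standard approximation trick of Remark \ref{remark 5.2}, combined with uniform continuity of $(c_m)\mapsto\sum_m c_m S_{p_{k_t}}(f_m,\zeta)$ on $K_m$, then lets us pass from rational tuples $(c_m)$ to arbitrary complex ones. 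Once this combinatorial bookkeeping is in place, the verification reduces to the Mergelyan/triangle-inequality arguments already used in Theorem \ref{theorem 5.1} and Remark \ref{remark 5.2}.
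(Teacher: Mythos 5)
Your strategy---build the spanning family $\{f_n\}$ incrementally by a single Mergelyan recursion and absorb the ``passive'' contributions by aiming the active increment at a shifted target---is a genuinely different route from the paper's. The paper instead iterates the construction of Theorem \ref{theorem 5.1}: having built $g_1,\dots,g_{N-1}$ together with, for each $K_m$, a nested chain of subsequences $(p_n)\supseteq(p^{(1)}_{m,n})\supseteq\cdots\supseteq(p^{(N-1)}_{m,n})$ along which $S_{p^{(\ell)}_{m,n}}(g_\ell,\zeta)\to 0$ on $K_m$ and the gap condition of Theorem \ref{theorem 5.1} holds, one invokes Theorem \ref{theorem 5.5} to obtain $g_N\in\bigcap_{m}\mathcal{B}(\mathcal{A}^{((N),m)})$, where $\mathcal{A}^{((N),m)}$ is built on the already-thinned sequence $(p^{(N-1)}_{m,n})$. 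For a nontrivial combination $g=\sum_{t=1}^{s} a_{j_t}g_{j_t}$ with $j_1<\cdots<j_s$ one simply lets the largest-index $g_{j_s}$ do the universal approximation along a subsequence nested inside all the $(p^{(j_t)}_{m,n})$; every other term contributes $a_{j_t}S(g_{j_t},\zeta)\to 0$ on $K_m$ \emph{whatever} the complex scalar $a_{j_t}$ is, so no correction term is needed, and the gap structure of the sum is inherited automatically because the chosen subsequence sits inside each $g_{j_t}$'s gap-marking subsequence.

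The gap in your version is the step you yourself flagged as the ``main obstacle,'' and the proposed fix does not close it. The map $(c_m)\mapsto\sum_{m\ne m^*}c_m S_{p_{k_t}}(f_m,\zeta)$ is linear in $(c_m)$ for each fixed $t$, but its operator norm on $K_m$ is $\max_{m}\|S_{p_{k_t}}(f_m,\zeta)\|_{K_m}$, and this quantity is \emph{unbounded} in $t$: each $f_m$ is a universal Taylor series, so its partial sums are dense (hence unbounded) in $A(K_m)$. Consequently there is no uniformity in $t$, and passing from the countably many rational tuples your enriched enumeration can address to an arbitrary complex tuple $(c_m)$ fails: replacing $(c_m)$ by a nearby rational $(c_m')$ with $|c_m-c_m'|<\varepsilon$ incurs an error on $K_m$ of size roughly $\varepsilon\cdot\max_m\|S_{p_{k_t}}(f_m,\zeta)\|_{K_m}$, which cannot be made small along the infinitely many stages you need. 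Any repair that matches the rational resolution at stage $t$ to the reciprocal of $\|S_{p_{k_t}}(f_m,\zeta)\|_{K_m}$ amounts to forcing the passive partial sums on $K_m$ under control at the chosen stages---which is precisely what the paper's nested-subsequence device accomplishes (by driving them to $0$). So the most natural way to close your gap collapses the argument into the paper's, and it is cleaner to adopt that strategy from the start.
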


\begin{proof} Let $\{ K_m \}_{m \geq 1}$ and $\{ L_k \}_{k \geq 1}$ be two families of compact subsets of $\mathbb{C}$ satisfying Lemmas \ref{lemma 3.1} and \ref{lemma 3.2} respectively. Let also $\{ f_i \}_{i \geq 1}$ be an enumeration of polynomials with coefficients in $\mathbb{Q} + i \mathbb{Q}$.

\begin{itemize}

\item[\textbf{Step 1.}] We consider a function $g_1 \in \mathcal{B}(\mathcal{A}^{(1)})$ so that the following hold:

\begin{itemize}

\item[(1)] $\rho(g_1, f_1) < \frac{1}{1}$.

\item[(2)] For every $m \geq 1$ there exists a subsequence $(p^{(1)}_{m, n})_{n \geq 1}$ of the sequence $(p_n)_{n \geq 1}$ such that:
$$ \sup_{z \in K_m} |S_{p^{(1)}_{m, n}}(g_1, \zeta)(z) - 0| \to 0 \; \text{as} \; n \to + \infty. $$

$$ \sup_{z \in J} |S_{p^{(1)}_{m, n}}(g_1, \zeta)(z) - g_1(z)| \to 0 \; \text{as} \; n \to + \infty $$

for every compact $J \subseteq \Omega$.

\end{itemize}

So at the end of \textbf{Step 1} we have constructed infinitely many subsequences of the sequence $(p_n)_{n \geq 1}$; the sequences $(p^{(1)}_{m, n})_{n \geq 1}$ for $m \geq 1$.

\item[\textbf{Step 2.}] We consider the system:
$$ \mathcal{A}^{((1), m)} = ((p^{(1)}_{m, n})_{n \geq 1}, N(((1), m), n), q^{(n)}_{i, ((1), m)} $$
$$ \text{for} \; i = 1, \cdots, N(((1), m), n), m \in \mathbb{N}. $$

According to Theorem \ref{theorem 5.5}, we yield the following:

There exists a function $g_2 \in \cap_{m \in \mathbb{N}} \mathcal{B}(\mathcal{A}^{((1), m)})$ so that the following hold:

\begin{itemize}

\item[(1)] $\rho(g_2, f_2) < \frac{1}{2}$.

\item[(2)] For every $m \geq 1$ there exists a subsequence $(p^{(2)}_{m, n})_{n \geq 1}$ of the sequence $(p^{(1)}_{m, n})_{n \geq 1}$ such that:
$$ \sup_{z \in K_m} |S_{p^{(2)}_{m, n}}(g_2, \zeta)(z) - 0| \to 0 \; \text{as} \; n \to + \infty. $$

$$ \sup_{z \in J} |S_{p^{(2)}_{m, n}}(g_2, \zeta)(z) - g_2(z)| \to 0 \; \text{as} \; n \to + \infty $$

for every compact $J \subseteq \Omega$.

\end{itemize}

So at the end of \textbf{Step 2} we have constructed infinitely many subsequences of the sequence $(p_n)_{n \geq 1}$, since for every $m \geq 1$ the sequence $(p^{(2)}_{m, n})_{n \geq 1}$ is a subsequence of $(p^{(1)}_{m, n})_{n \geq 1}$.

\item[\textbf{Step N.}] We consider the system:
$$ \mathcal{A}^{((N), m)} = ((p^{(N - 1)}_{m, n})_{n \geq 1}, N(((N - 1), m), n), q^{(n)}_{i, ((N - 1), m)} $$
$$ \text{for} \; i = 1, \cdots, N(((N - 1), m), n), m \in \mathbb{N}. $$ 

According to Theorem \ref{theorem 5.5}, we yield the following:

There exists a function $g_N \in \cap_{m \in \mathbb{N}} \mathcal{B}(\mathcal{A}^{((N), m)})$ so that the following hold:

\begin{itemize}

\item[(1)] $\rho(g_N, f_N) < \frac{1}{N}$.

\item[(2)] For every $m \geq 1$ there exists a subsequence $(p^{(N)}_{m, n})_{n \geq 1}$ of the sequence $(p^{(N - 1)}_{m, n})_{n \geq 1}$ such that:
$$ \sup_{z \in K_m} |S_{p^{(N)}_{m, n}}(g_N, \zeta)(z) - 0| \to 0 \; \text{as} \; n \to + \infty. $$

$$ \sup_{z \in J} |S_{p^{(N)}_{m, n}}(g_N, \zeta)(z) - g_N(z)| \to 0 \; \text{as} \; n \to + \infty $$

for every compact $J \subseteq \Omega$.

\end{itemize}

So at the end of \textbf{Step N} we have constructed infinitely many subsequences of the sequence $(p_n)_{n \geq 1}$, since for every $m \geq 1$ the sequence $(p^{(N)}_{m, n})_{n \geq 1}$ is a subsequence of $(p^{(N - 1)}_{m, n})_{n \geq 1}$.

\end{itemize}

We consider now the linear span $<g_n : n \geq 1> \; \subseteq H(\Omega)$. Let $j_1 < \cdots < j_s \in \mathbb{N}$ and $a_{j_1}, \cdots, a_{j_s} \in \mathbb{C} \setminus \{ 0 \}$. We set $g \equiv a_{j_1} g_{j_1} + \cdots + a_{j_s} g_{j_s}$. Our aim is to prove that the function $g$ is a universal Taylor series and belong to the class $\mathcal{A'}$. Thus, let $K \subseteq \mathbb{C} \setminus \Omega$ be a compact set with connected complement and $h \in A(K)$. We consider an index $m \in \mathbb{N}$ such that $K \subseteq K_m$. Since $g_{j_s} \in \cap_{m \in \mathbb{N}} \mathcal{B}(\mathcal{A}^{((j_s), m)})$, there exists a subsequence $(p^{(j_s)}_{k_n})_{n \geq 1}$ of the sequence $(g^{(j_s)}_{m, n})_{n \geq 1}$ such that:

\begin{itemize}

\item[(1)] $\sup_{z \in K_m} |S_{p^{(j_s)}_{k_n}}(g_{j_s}, \zeta)(z) - \frac{h(z)}{a_{j_s}}| \to 0 \; \text{as} \; n \to + \infty$.

\item[(2)] $\sup_{z \in J} |S_{p^{(j_s)}_{k_n}}(g_{j_s}, \zeta)(z) - g_{j_s}(z)| \to 0 \; \text{as} \; n \to + \infty$

for every compact set $J \subseteq \Omega$.

\end{itemize}

Since $(p^{(j_s)}_{k_n})_{n \geq 1}$ is a subsequence of the sequence $(g^{(j_s)}_{m, n})_{n \geq 1}$ we obtain that for every $t < s$ it holds:

\begin{itemize}

\item[(1)] $\sup_{z \in K_m} |S_{p^{(j_s)}_{k_n}}(g_{j_t}, \zeta)(z) - 0| \to 0 \; \text{as} \; n \to + \infty$.

\item[(2)] $\sup_{z \in J} |S_{p^{(j_s)}_{k_n}}(g_{j_t}, \zeta)(z) - g_{j_t}(z)| \to 0 \; \text{as} \; n \to + \infty$

for every compact set $J \subseteq \Omega$.

\end{itemize}

Thus, it follows that:

\begin{itemize}

\item[(1)] $$ \sup_{z \in K_m} |S_{p^{(j_s)}_{k_n}}(g, \zeta)(z) - h(z)| = $$

$$ = \sup_{z \in K_m} |S_{p^{(j_s)}_{k_n}}(a_{j_1} g_{j_1} + \cdots + a_{j_s} g_{j_s}, \zeta)(z) - h(z)| = $$

$$ = \sup_{z \in K_m} |a_{j_1} S_{p^{(j_s)}_{k_n}}(g_{j_1}, \zeta)(z) + \cdots + a_{j_s} S_{p^{(j_s)}_{k_n}}(g_{j_s}, \zeta)(z) - h(z)| \leq $$

$$ \leq \sup_{z \in K_m} |a_{j_1} S_{p^{(j_s)}_{k_n}}(g_{j_1}, \zeta)(z)| + \cdots + \sup_{z \in K_m} |a_{j_{s - 1}} S_{p^{(j_s)}_{k_n}}(g_{j_{s - 1}}, \zeta)(z)| + $$

$$ + \sup_{z \in K_m} |a_{j_1} S_{p^{(j_s)}_{k_n}}(g_{j_s}, \zeta)(z) - h(z)| \to 0 \; \text{as} \; n \to + \infty. $$

\item[(2)] $\sup_{z \in J} |S_{p^{(j_s)}_{k_n}}(g, \zeta)(z) - g(z)| \to 0 \; \text{as} \; n \to + \infty$

for every compact set $J \subseteq \Omega$.

\end{itemize}

Finally, we notice that for every $n \geq 1$ it holds: $[g; p^{(j_s)}_{k_n} / q^{(k_n)}_{(j_s), i}]_{\zeta}(z) = S_{p^{(j_s)}_{k_n}}(g, \zeta)(z)$ for every $i = 1, \cdots N((j_s), k_n)$.

\end{proof}

\begin{remark} \label{remark 5.9} In the above construction we may have that $a_{p_{k_n}} = 0$ for some $n \in \mathbb{N}$, which would imply that $g \not\in D_{p_{k_n}, p_{k_n}}(\zeta)$ and also that $g \not\in \mathcal{A}$. But still we have that $g \in \mathcal{A'}$.

\end{remark}

\begin{remark} \label{remark 5.10} If $q^{(i)}_n \neq 0$ then every function belonging to the class $\mathcal{A'}$ has some Taylor coefficients equal to zero. It follows that $\mathcal{A'}$ is meager in $H(\Omega)$. Since the set of universal Taylor series is dense and $G_{\delta}$ in $H(\Omega)$ it follows that the result of Theorem \ref{theorem 5.8} can not be deduced from the known results about algebraic genericity of the set of universal Taylor series.

\end{remark}

\bigskip
\bigskip
\bigskip

\noindent
University of Athens
\\
Department of Mathematics
\\
157 84 Panepistimioupolis
\\
Athens
\\
Greece

\bigskip

\noindent
Email Address:
\\
K. Makridis (kmak167@gmail.com)

\noindent
V. Nestoridis (vnestor@math.uoa.gr)

\end{document}